\newcommand{\iso}{\cong}            %isomorfismo
\newcommand{\Hom}[3]{\mathrm{Hom}_{#1}\bigl(#2,#3\bigr)}   
\newcommand{\End}[2]{\mathrm{End}_{#1}\bigl(#2\bigr)}    %End
\newcommand{\Gal}[2]{\mathrm{Gal}({#1}/{#2})}   %gruppo di Galois 
\DeclareMathOperator{\Frac}{Frac}
\DeclareMathOperator{\Spec}{Spec}
\DeclareMathOperator{\CHM}{\mathcal{M} ot}
\DeclareMathOperator{\NUM}{NUM}
\DeclareMathOperator{\Sym}{Sym}
\newcommand{\ZZ}{\ensuremath{\mathbb{Z}}} 
\newcommand{\QQ}{\ensuremath{\mathbb{Q}}} 
\newcommand{\RR}{\ensuremath{\mathbb{R}}} 
\newcommand{\CC}{\ensuremath{\mathbb{C}}} 
\newcommand{\Fp}{\ensuremath{\mathbb{F}_{p}}}
\newcommand{\Qp}{\ensuremath{\mathbb{Q}_p}}    % corpo interi p-adici
\newcommand{\Zp}{\ensuremath{\mathbb{Z}_p}}    % anello interi p-adici
\newcommand{\Cp}{\ensuremath{\mathbb{C}_p}} %
\newcommand{\Bdr}{\ensuremath{\mathrm{B}_{\mathrm{dR}} }}      %Bdr
\newcommand{\Bcris}{\ensuremath{\mathrm{B}_{\mathrm{cris}} }}  %Bcris
\newcommand{\FiGa}[1][]%categoria dei Fi-Gamma moduli
{\ensuremath{{\Phi\!\Gamma}_{{#1}}^{\mathrm{\acute{e}t}}}}
\newcommand{\mca}[1]{\ensuremath{\mathcal{#1}}}
\DeclareFontFamily{T1}{rsfs}{}
\DeclareFontShape{T1}{rsfs}{m}{n}{ <-> rsfs10 }{}
\newcommand{\id}[1][]{\ensuremath{\mathrm{Id}_{#1} }}
\DeclareMathOperator{\tr}{tr}
\renewcommand{\det}{\operatorname{det}} 
\newcommand{\Fro}[1][]{\ensuremath{\varphi_{#1}} }
\newcommand{\cris}{_\mathrm{cris}}
\newcommand{\dR}{_\mathrm{dR}}
\newcommand{\eps}{\ensuremath{\varepsilon} }
\newcommand{\Fil}[1][i]{\ensuremath{\mathrm{Fil}^{#1}} }
\newcommand{\nr}{\ensuremath{^\mathrm{nr}}}
\newcommand{\num}{\mathrm{num}}
\newcommand{\prim}{\mathrm{prim}}
\newcommand{\one}%
{{\rm 1\hspace*{-0.4ex}\rule{0.1ex}{1.52ex}\hspace*{0.2ex}}}%{\bf 1}
\newcommand{\Fa}{\ensuremath{F_a}} %\mathrm{o}}   %{_0} %
\newcommand{\Fatilde}{\tilde{F}_a}
\newcommand{\La}{\ensuremath{L_a}}
\newcommand{\cf}{z} %%%% moltiplicazione a destra per F %% cf= Che famo?!
\newcommand{\Qpbar}{\overline{\mathbb{Q}}_p}
\newcommand{\Rcris}{\ensuremath{R\cris}}
\newcommand{\RdR}{\ensuremath{R\dR}}
\newcommand{\BcrisF}{\ensuremath{\mathrm{B}_{\mathrm{cris},F} }}  %Bcris,F
\newcommand{\BcrisL}{\ensuremath{\mathrm{B}_{\mathrm{cris},L} }}
\newcommand{\BcrisFtilde}{\ensuremath{\mathrm{B}_{\mathrm{cris},\tilde{F}}} }
\newcommand{\vdR}{v\dR}
\newcommand{\Vcrisv}{V\cris^{\vee}}
\newenvironment{meta}{%
%
%\noindent \color{red}
\sffamily[}{\upshape]}
\newenvironment{dem}[1][Proof.]
{\begin{proof}[#1]}{\end{proof}}
\newtheorem{theorem}[subsection]{Theorem} %[subsection]
\newtheorem{prop}[subsection]{Proposition}
\newtheorem{lemma}[subsection]{Lemma}
\newtheorem{cor}[subsection]{Corollary}
\newtheorem*{claim*}{Claim}
\newtheorem{conj}[subsection]{Conjecture}
\theoremstyle{definition}
\newtheorem{definition}[subsection]{Definition}
\newtheorem*{definition*}{Definition}
\newtheorem{conv}[subsection]{Convention}
\theoremstyle{remark}
\newtheorem{remark}[subsection]{Remark}
\newtheorem{remark*}{Remark}
\numberwithin{equation}{subsection}   %{subsubsection} 
\title{The Hilbert symbol in the Hodge standard conjecture} 
\begin{document}

%%    Information for  authors
\author{Giuseppe Ancona \and Adriano Marmora}
%%    Address of record for the research reported here
\address{IRMA, Université de Strasbourg et CNRS, 
7 rue René-Descartes,
67084 Strasbourg, France
}
%%    Current address
%%\curraddr{Department of Mathematics...}
\email{ancona@math.unistra.fr, marmora@math.unistra.fr}

%%%    \thanks will become a 1st page footnote.
\thanks{ This research was partly supported by the grant ANR--23--CE40--0011 of \emph{Agence National de la Recherche}.
	%This research was partly supported by the grant ANR--18--CE40--0017 of Agence National de la Recherche.
	}

\date{\today}
%\subjclass{}
%\keywords{}

\begin{abstract}
We study the Hodge standard conjecture for varieties over finite fields admitting a CM lifting, such as abelian varieties or products of K3 surfaces. For those varieties we show that the signature predicted by the conjecture  holds true modulo $4$. This amounts to determining the discriminant and the Hilbert symbol of the intersection product. The first is obtained by $\ell$-adic arguments whereas the second needs a careful computation  in $p$-adic Hodge theory.
  \end{abstract}

\maketitle

\setcounter{tocdepth}{1} %% no subsection in ToC
\tableofcontents

%%%%%%%%%%%%%%%%%%%%%%%%%%%%%% Inizio

\section{Introduction} 

The standard conjecture of Hodge type predicts the signature of the intersection product of algebraic classes on a smooth projective variety. In this paper we study the discriminant and the Hilbert symbol of the intersection product and we show that 
they coincide with those predicted by the conjecture for varieties over finite fields which admit a CM lifting, in particular for abelian varieties and for  products of K3 surfaces.  
This can be reformulated by saying that the expected signature holds true modulo $4$. 

This conjecture was formulated by Grothendieck in the sixties \cite{Gro}. It is modelled  on positivity results such as the Hodge index theorem, the Hodge--Riemann bilinear relations and the positivity of the Rosati involution. For a panorama on the history, the original motivation and  potential applications of this conjecture see \cites{MR2115000, MR4199442, IIK}. 
This conjecture did not progress until the 21st century. Then, Milne showed that the classical Hodge conjecture for complex abelian varieties would imply the Hodge standard conjecture for abelian varieties in positive characteristic \cite{MR1906596} and Ito studied the behaviour of this conjecture under blow-ups \cite{MR2125735}. 

Very recently, some progress has been made. The first author proved the conjecture for  motives of rank two using $p$-adic Hodge theory  \cite[Theorem 8.1]{MR4199442}. This result implies the conjecture for abelian fourfolds \cite[Theorem 1.3]{MR4199442} and for some other abelian varieties \cite{Koshi_HSC_prime,Agug}. On the other hand, Ito--Ito--Koshikawa proved the conjecture for the square of a K3 surface, using the Kuga--Satake construction and ultimately  relying on the positivity of the Rosati involution \cite{IIK}.

The purpose of this article is to push the $p$-adic methods initiated in \cite{MR4199442} to CM motives of higher rank
and  to describe completely the Hilbert symbol %and the discriminant 
of the intersection product.
To be more precise, let us recall the formulation of the standard conjecture of Hodge type. 
 \begin{definition}\label{def intro}
Let $X$ be  a smooth, projective and geometrically connected variety of dimension $d$ and let $L$ be a hyperplane section  of $X$. We denote by $\mathcal{Z}^{n}_{\num}(X)_{\QQ}$ the finite dimensional $\QQ$-vector space of $\QQ$-algebraic cycles on $X$ of codimension $n$ modulo numerical equivalence.
     For $n \leq d/2$ we define the space of primitive cycles  as
\[\mathcal{Z}^{n,\prim}_{\num}(X,L)_{\QQ} \coloneqq \{\alpha \in \mathcal{Z}^{n}_{\num}(X)_{\QQ} , \hspace{0.2cm} \alpha \cdot L^ {{d-2n+1}}=0 \hspace{0.2cm}  \textrm{in}  \hspace{0.2cm}  \mathcal{Z}^{d-n+1}_{\num}(X)_{\QQ}\}\]
and we define the pairing  
$q_{X,L,n}: \mathcal{Z}^{n,\prim}_{\num}(X,L)_{\QQ}  \times  \mathcal{Z}^{n,\prim}_{\num}(X,L)_{\QQ}  \rightarrow \QQ$
 via the intersection product
 \[\alpha, \beta \mapsto  (-1)^n \alpha \cdot \beta \cdot L^{d-2n}.\]
 \end{definition}

   \begin{conj}{(Hodge standard conjecture, \cite[Conjecture 2]{Gro}.)} Let $X$ be  a smooth, projective and geometrically connected variety of dimension $d$ and let $L$ be a hyperplane section  of $X$. Then for all $n \leq d/2$, the quadratic form $q_{X,L,n}$ 
is positive definite. 
%   
%\noindent For all $(X,L)$ as above and for all $n \leq d/2$, the quadratic form $q_{n,X,L}$ 
%is positive definite.
\end{conj}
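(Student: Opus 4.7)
The plan is to reduce to the known characteristic-zero case and then attack positive characteristic via $p$-adic methods. In characteristic zero the result is classical: for $X$ smooth projective over $\CC$, the Hodge--Riemann bilinear relations say that $(\alpha, \beta) \mapsto (-1)^n \int_X \alpha \wedge \beta \wedge L^{d-2n}$ is positive definite on the primitive subspace of $H^{n,n}(X,\RR)$. Algebraic cycles modulo homological equivalence inject into $H^{n,n}(X,\QQ)$, and the restriction of a positive definite form is positive definite; in particular its radical is zero, so numerical and homological equivalence coincide on primitive algebraic classes and $q_{n,X,L}$ is positive definite. The Lefschetz principle extends this to any field of characteristic zero.

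For $X$ over $\Fq$, the natural strategy is to exploit lifting. When $X$ admits a smooth proper lift $\mathcal{X}/W(\Fq)$, the Berthelot--Ogus comparison furnishes a $\Qp$-linear isometry between crystalline cohomology $H^{2n}\cris(X/W) \otimes \Qp$ and de Rham cohomology $H^{2n}_{\mathrm{dR}}(\mathcal{X}_K)$ of the generic fiber, compatible with cycle classes and with the intersection pairing. Algebraic classes on $X$ that arise by specialisation of classes on $\mathcal{X}_K$ therefore inherit positive definiteness from the first step. One would then attempt to promote this to positivity on the full primitive space of $X$, for instance by combining Weil-type bounds on crystalline Frobenius eigenvalues with Tate-style arguments relating the complement of the specialisation image to endomorphism algebras coming from the CM structure of $\mathcal{X}$, or via a Kuga--Satake-like mechanism (as in Koshikawa--Ito--Ito for the K3 case) trading higher-codimension positivity for Rosati positivity on an auxiliary abelian variety.

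The main obstacle, and the reason the conjecture remains open, is that the specialisation map on cycles is not known to be surjective in codimension greater than one: genuinely new algebraic classes can appear on the special fiber, and Hodge theory upstairs gives no a priori control of their intersection numbers. Closing this gap seems to require either a substitute for Hodge--Riemann valid in positive characteristic (which is unavailable) or a uniform Kuga--Satake-type reduction far beyond what is currently known. Given this obstruction, the paper's realistic deliverable is to compute only the two invariants that $p$-adic comparison does control: the discriminant (by $\ell$-adic Galois-equivariant arguments) and the Hilbert symbol (by a delicate $p$-adic Hodge theory computation on the CM lift) of $q_{n,X,L}$, and to match them with the characteristic-zero prediction. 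Since a non-degenerate rational quadratic form is determined up to signature modulo four by its rank, discriminant and Hilbert symbols, this yields the signature predicted by the conjecture modulo four, which is as far as the approach pushes in the generality considered.
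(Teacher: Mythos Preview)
The statement you were asked to prove is a \emph{conjecture}, not a theorem: the paper presents it as the open Hodge standard conjecture and does not claim a proof. There is therefore no ``paper's own proof'' to compare against. Your write-up correctly recognises this and is really a discussion of the difficulty rather than a proof attempt, which is the honest response.

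A few comments on the accuracy of your discussion. Your characteristic-zero paragraph is correct and standard. Your positive-characteristic paragraph, however, overstates what lifting and Berthelot--Ogus give you: even when a lift exists, the comparison isomorphism is $\Qp$-linear, and positivity is an archimedean notion, so one cannot simply ``inherit positive definiteness'' across it; this is precisely why the paper has to work so hard with local invariants. The obstruction you name (failure of surjectivity of specialisation on cycles) is a genuine issue but not quite the central one in the paper's setting: under the CM and supersingularity hypotheses of Theorem~\ref{mainthm}, the space of algebraic classes on the special fibre is actually the whole realisation (Lemma~\ref{supersing}), so there are no ``new'' classes to worry about; the difficulty is that the $p$-adic comparison does not transport the sign of a rational quadratic form.

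Your final paragraph is an accurate summary of what the paper actually proves: discriminant via $\ell$-adic arguments, Hilbert symbol via a $p$-adic Hodge-theoretic computation of a period in $\BcrisF$, yielding the predicted signature modulo four. That is the content of Theorems~\ref{thmintro} and~\ref{thmintromot}, not of the conjecture itself.
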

 \begin{remark}
	The original formulation of \cite{Gro} is with cycles modulo homological equivalence. As another standard conjecture predicts that homological and numerical equivalence should coincide, the two formulations should be equivalent. See also \cite[3.11-3.12]{MR4199442}.
\end{remark}
Our main result is the following.
\begin{theorem}\label{thmintro}
\begin{enumerate}
\item
Let $(X,L)$ be an abelian variety over a finite field  together with a hyperplane section induced by a CM lifting. Let $n \leq \dim(X)/2$ be any integer and  $(s_+,s_-)$ be the signature of $q_{X,L,n}$. Then $s_-$ is divisible by $4$.
\item
Let $X=S_1\times   \ldots \times S_m$ be the product of  K3 surfaces over a finite field, $L_i$ be a hyperplane section induced by a  CM lifting of $S_i$, {\itshape cf.} 
\cite{MR4241794},    and let $L=\oplus_i p^*_i L_i$ be the induced hyperplane section on $X$. Let $n \leq \dim(X)/2=m$ be any integer and   $(s_+,s_-)$ be the signature of $q_{X,L,n}$. Then $s_-$ is divisible by $4$.
\end{enumerate}
\end{theorem}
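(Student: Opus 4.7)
The plan is to reformulate the statement in terms of classical invariants of rational quadratic forms. For a non-degenerate $\QQ$-quadratic form $q$ of rank $r$ and signature $(s_{+},s_{-})$, the residue $s_{-}\pmod{4}$ is determined by the discriminant $\mathrm{disc}(q)\in\QQ^{*}/(\QQ^{*})^{2}$ together with the Hasse invariant $\varepsilon_{\infty}(q)$, and Hilbert reciprocity reconstructs $\varepsilon_{\infty}(q)$ from the discriminant and the Hasse invariants $\varepsilon_{v}(q)$ at all finite places $v$. On the other hand, the analogous pairing $q_{n,\mathcal{X}_{\CC},\mathcal{L}_{\CC}}$ on the complex fibre of the CM lift $\mathcal{X}$ is positive definite by the Hodge--Riemann bilinear relations, so its own $s_{-}$ equals $0$. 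It therefore suffices to show that $q_{n,X,L}$ and $q_{n,\mathcal{X}_{\CC},\mathcal{L}_{\CC}}$ share the same discriminant and the same Hilbert symbol at every finite place of $\QQ$.

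At every prime $\ell\neq p$ the cycle class map embeds the primitive part of $\mathcal{Z}^{n}_{\num}(X)_{\QQ}$ into $H^{2n}_{\mathrm{\acute{e}t}}(X_{\overline{\FF}_{q}},\Ql(n))$, carrying the intersection pairing to the Poincar\'e pairing; proper smooth base change then identifies this pairing, Galois-equivariantly, with its counterpart on the generic fibre $\mathcal{X}_{\overline{\QQ}}$. From this single construction one extracts simultaneously the discriminant in $\QQ^{*}/(\QQ^{*})^{2}$ and the Hilbert symbols $\varepsilon_{\ell}$ for all $\ell\neq p$, reducing the problem to the single remaining place $p$.

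The main technical obstacle is the Hilbert symbol at $p$, where $\ell$-adic base change fails and one must pass to $p$-adic Hodge theory. The proposal is to invoke Fontaine's functor $\Dcris$ together with the crystalline comparison isomorphism to identify the primitive part of $H^{2n}_{\mathrm{cris}}(X)$ with a sub filtered $\varphi$-module of $H^{2n}_{\mathrm{dR}}(\mathcal{X})$. After extending scalars to a suitable CM field $E\subset\overline{\QQ}_{p}$, the CM structure diagonalizes the quadratic form into rank-one pieces indexed by pairs $\{\sigma,\bar\sigma\}$ of complex-conjugate embeddings of $E$; each Gram entry is the product of a crystalline period and its conjugate, and the Hilbert symbol $\varepsilon_{p}$ is encoded in the $p$-adic valuations of these periods (controlled by the Newton slopes of the Frobenius) together with the Hodge--Tate weights of the associated CM characters. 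Carrying out this reciprocity-style computation, which extends the rank-two case of \cite[Theorem 8.1]{MR4199442} to CM motives of arbitrary rank, is where the bulk of the work lies; it is also what justifies the title of the article.

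For the second part, I would reduce to the first. The transcendental motive of a CM K3 surface is, via the Kuga--Satake construction, a direct summand (up to Tate twist) of $H^{1}$ of a CM abelian variety, so the K\"unneth decomposition of $H^{*}(S_{1}\times\cdots\times S_{m})$ splits orthogonally into CM motives of abelian type. Multiplicativity of the discriminant and of the Hasse invariants with respect to orthogonal decompositions then transfers the abelian-variety conclusion to the product of K3 surfaces.
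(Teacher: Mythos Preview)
Your overall architecture (discriminant plus product formula, $\ell$-adic input away from $p$, $p$-adic Hodge theory at $p$) is the right one, but the comparison object is wrong and this breaks both the $\ell$-adic and the $p$-adic steps. You propose to compare $q_{n,X,L}$ with the analogous form $q_{n,\mathcal{X}_{\CC},\mathcal{L}_{\CC}}$ on primitive cycles of the complex fibre. These two $\QQ$-spaces have different ranks in general: the interesting case is precisely when (a piece of) the motive of $X$ is supersingular, so that algebraic classes fill up the whole cohomology over the finite field, whereas on $\mathcal{X}_{\CC}$ they sit only in the $(n,n)$-part. Your $\ell$-adic argument embeds both cycle spaces into the same ambient $H^{2n}_{\mathrm{\acute et}}$, but that does not produce an isometry between the two restrictions, so you cannot read off equality of discriminants or of $\varepsilon_\ell$ from it.

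The paper's remedy is to first use the CM action to decompose the motive of $X$ into pieces $M$ whose special fibre is supersingular, so that $V_Z=\Hom{}{\one}{M_{/k}}$ has the same dimension as the \emph{full} Betti realisation $V_B=R_B(M)$. One then compares $q_Z$ with the polarisation form $q_B$ on all of $V_B$, not on its Hodge classes; by Hodge--Riemann this $q_B$ is not positive definite but has signature $(\sum_{i\text{ even}}h^i,\sum_{i\text{ odd}}h^i)$. Now the $\ell$-adic step is legitimate because $q_Z\otimes\Ql$ and $q_B\otimes\Ql$ are both the \emph{entire} $R_\ell(M)$, identified by smooth proper base change (Proposition~\ref{padicreduction}). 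At $p$ the target identity is not $\varepsilon_p(q_Z)=\varepsilon_p(q_B)$ but $\varepsilon_p(q_Z)/\varepsilon_p(q_B)=(-1)^{s_M}$, where $s_M$ is the minimum of the Hodge polygon; this sign is exactly what cancels $\varepsilon_\infty(q_B)=(-1)^{s_M}$ in the product formula. Establishing that sign is the core of the paper (Sections~\ref{sect:CQ-qf}--\ref{sect:LT_and_reciprocity}): Milnor's theorem on CM quadratic forms converts it into whether the renormalisation factor $\lambda\lambda^*\in F_0^\times$ is a norm from $F$, and this is decided via Lubin--Tate periods and Dwork's computation of local reciprocity. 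Finally, for part~(2) the paper does not pass through Kuga--Satake; it applies the same motivic argument directly to the product of K3 surfaces using their CM lifts. The Kuga--Satake route is the approach of \cite{IIK}, mentioned in the introduction as an independent method.
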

The above theorem follows from   a general result on motives endowed with quadratic forms (see Theorem \ref{mainthm} for the precise definitions).
\begin{theorem}\label{thmintromot}
Let $M$ be a CM motive  in mixed characteristic with coefficients in $\QQ$. Suppose that its special fiber $M_p$ is supersingular and that $M$ is endowed with a CM quadratic form $q$ whose Betti realization is a polarisation of the underlying Hodge structure. Let $q_Z$ be the restriction of $q$ to all algebraic classes of $M_p$. Write $(s_+,s_-)$ for the signature of $q_Z$, then  $s_-$ is divisible by $4$.\end{theorem}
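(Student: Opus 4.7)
The plan is to compute the two classical invariants of $q_Z$ that determine its signature modulo four --- the discriminant and the Hasse--Witt invariants --- and then conclude via the product formula for Hilbert symbols. Recall that for a nondegenerate rational quadratic form $q_Z$ of rank $r = s_+ + s_-$, the integer $s_-$ is even if and only if $\mathrm{disc}(q_Z)$ is represented by a positive rational, and, when $s_-$ is even, $s_- \equiv 0 \pmod{4}$ if and only if the Hasse--Witt invariant $\epsilon_\infty(q_Z)$ equals $+1$ (since $\epsilon_\infty(q_Z) = (-1)^{\binom{s_-}{2}}$). By the product formula $\prod_v \epsilon_v(q_Z) = 1$, the value $\epsilon_\infty(q_Z)$ is determined by the finite-place invariants. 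Hence the goal reduces to computing $\mathrm{disc}(q_Z)$ and $\epsilon_v(q_Z)$ for every finite place $v$.

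The supersingularity of $M_p$ plays the role of the Tate conjecture here: it identifies the $\QQ$-vector space $Z$ of algebraic classes, after tensoring with $\Ql$, with the Frobenius fixed part of the $\ell$-adic realization of $M_p$ for every $\ell \neq p$, and likewise --- up to an unramified twist --- with the Frobenius fixed part of the crystalline realization at $p$. The CM decomposition $M = \bigoplus_\chi M_\chi$ then splits $q$ into pairs $M_\chi \oplus M_{\bar{\chi}}$, and the contribution of each CM character to $q_Z$ is controlled by the Frobenius eigenvalues. Feeding this description into the $\ell$-adic realizations, one computes the discriminant of $q_Z$ as a product of CM character values and compares it with the Betti realization, where $q$ is positive definite and has trivial discriminant. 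This shows $\mathrm{disc}(q_Z) > 0$ in $\QQ^*/\QQ^{*2}$, so $s_-$ is even, and simultaneously $\epsilon_\ell(q_Z) = 1$ for every $\ell \neq p$.

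The main obstacle, and the technical heart of the argument, is the computation of $\epsilon_p(q_Z)$, which is where the $p$-adic Hodge theory announced in the abstract enters. One works with the crystalline realization of $M_p$, the comparison isomorphism with the de Rham realization of the generic fiber $M$, and the CM action on Fontaine's period rings, in order to express $\epsilon_p(q_Z)$ as an explicit Hilbert symbol in $\Qp$ built from the CM character together with the Frobenius and Hodge data of the weakly admissible filtered $\varphi$-module. Tracking these ingredients carefully --- in particular the interplay between Frobenius eigenvalues and the Hodge filtration for a CM lifting of a supersingular motive --- yields the precise value of $\epsilon_p(q_Z)$. The product formula then gives $\epsilon_\infty(q_Z) = \epsilon_p(q_Z) \cdot \prod_{\ell \neq p} \epsilon_\ell(q_Z) = \epsilon_p(q_Z)$, and the analysis shows that this equals $+1$. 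Together with the discriminant computation, this proves $s_- \equiv 0 \pmod{4}$.
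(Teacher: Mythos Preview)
Your overall architecture (discriminant, Hasse--Witt invariants, product formula, reduction to the prime $p$) matches the paper's, but there is a genuine error in the setup that makes the target of your $p$-adic computation wrong, and the $p$-adic computation itself is not actually carried out.

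The error is the claim that the Betti realization $q_B$ is positive definite. A polarization on a weight-zero Hodge structure is \emph{not} positive definite in general: by the Hodge--Riemann relations its signature is $(\sum_{i\text{ even}} h^i,\ \sum_{i\text{ odd}} h^i)$. What is true is that $s_{B,-}$ is even (Hodge symmetry), so $\mathrm{disc}(q_B)>0$, and $\epsilon_\infty(q_B)=(-1)^{s_M}$ with $s_M=\sum_{i\ge 1,\text{ odd}} h^i$. The $\ell$-adic comparison (smooth proper base change together with Artin's theorem) gives $q_Z\otimes\Ql\cong q_B\otimes\Ql$, hence $\epsilon_\ell(q_Z)=\epsilon_\ell(q_B)$ for all $\ell\neq p$ --- not $\epsilon_\ell(q_Z)=1$. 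Running the product formula for both forms and dividing, the correct $p$-adic target is
\[
\epsilon_p(q_Z)/\epsilon_p(q_B)=(-1)^{s_M},
\]
not $\epsilon_p(q_Z)=+1$. Equivalently (since the discriminants agree), you must decide whether $q_{Z,p}$ and $q_{B,p}$ are isometric over $\Qp$, and show that the answer is governed by the parity of the minimum of the Hodge polygon.

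Second, your sentence ``the analysis shows that this equals $+1$'' is precisely where all the content lives, and it is not supplied. After decomposing along $F\otimes_\QQ\Qp$, each factor is a CM-quadratic space over a $p$-adic field $F$ with involution $*$ fixing $F_0$. By a theorem of Milnor, two such forms are isometric iff a certain gauge lies in $N_{F/F_0}(F^\times)$; the comparison isomorphism produces a single period $\lambda\in\mathrm{B}_{\mathrm{cris},F}^\times$ and the question becomes whether $\lambda\lambda^*\in F_0^\times$ is a norm. This is multiplicative in tensor products, so one reduces to a ``fundamental'' filtered CM-space with Hodge jumps $(+1,-1,0,\dots,0)$. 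For that object the paper identifies $\lambda$ up to $F^\times$ via Lubin--Tate periods and then applies Dwork's description of the local reciprocity map to prove $\lambda\lambda^*\notin N_{F/F_0}(F^\times)$. None of this mechanism --- Milnor's gauge criterion, the reduction to tensor generators, Lubin--Tate periods, Dwork's theorem --- appears in your sketch, and without it the $p$-adic step is a black box.
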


Let us give a sketch of the proof of Theorem \ref{thmintromot}.
For a quadratic form over $\QQ$, the condition $4\vert s_-$ is equivalent to the fact that the discriminant is positive and that the Hilbert symbol at infinity is $+1$. Both conditions can be checked by studying the quadratic form in the non-archimedean   places, thanks to the classical product formula on Hilbert symbols. In particular,  Theorem \ref{thmintromot} boils down to the study of the quadratic form $q_Z$ after its $\ell$-adic and crystalline realization. The $\ell$-adic computation is obtained  from general results on $\ell$-adic cohomology, and it is sufficient to prove that the discriminant is positive, as in \cite{MR4199442}.
Instead, to compute the Hilbert symbol, we need to control the crystalline contribution.  Our arguments  rely heavily on $p$-adic Hodge theory. 
Let us give the mainlines below.

We begin by extending the coefficients of  the motive $M$ from $\QQ$ to $\Qp$. Precisely, let $F$ be the CM number field acting on $M$, the motive $M\otimes_{\QQ} \Qp$ is endowed with an action of $F \otimes_{\QQ} \Qp$. Such an action decomposes the motive into a sum of direct factors and we study them separately. Thus, we reduce  to the situation where the motive $M$ has an action of a finite extension of $\Qp$, which we will denote again by $F$, %and which is 
endowed with a non trivial involution $*\colon F\rightarrow F$. The crystalline and $p$-adic étale realisations of $M$ are endowed by hypothesis with a non-degenerate quadratic form. %which is hermitian with respect to the involution. %$*$.
Faltings's crystalline comparison  theorem \cite{MR1463696} allows to compare these realisations via a matrix of periods laying in Fontaine's ring $\Bcris$ of $p$-adic periods.  By using the CM action we characterise this matrix by  a single invertible period $\lambda \in\BcrisF^{\times}$, where $\BcrisF$ is the smallest 
subring of $\Bdr$ containing $\Bcris$ and $F$. This period is unique up to a scalar in $F^{\times}$ and it controls completely the arithmetic of our problem. Namely it produces a ``re-normalisation factor'' $\lambda\lambda^*$ belonging to $F_0^{\times}$, where $F_0$ denotes the subfield of $F$ of elements fixed by $*$. Thanks to a theorem of Milnor  on CM quadratic forms on $p$-adic fields \cite{MR249519}, we show that our problem is equivalent to control precisely whether the re-normalisation factor is a norm  of an element of $F^{\times}$. We verify that this problem is multiplicative with respect to the tensor product of motives: hence, we are reduced to treat the case of small Hodge weights. For those motives we need to prove that the re-normalisation  factor $\lambda\lambda^*$ is not a norm.

This is the crucial point of the article.
  Even if the definition of the period $\lambda$, based on the crystalline comparison theorem, is not explicit,
	this period satisfies a strong uniqueness property with respect to the behaviour under the Frobenius and the de Rham filtration.
	Such a uniqueness follows from the fundamental exact sequences in $p$-adic Hodge theory.
		Therefore, we can describe $\lambda$ through any other periods with similar behaviour. For this purpose, we use 
	 	   Lubin--Tate periods of Colmez \cite{MR1956055} to get some control of $\lambda$.	  
		It turns out that such a control is precisely what we need to apply  
		 a theorem of Dwork 
\cite{MR98079} in order to compute the image of 
		the re-normalisation factor $\lambda\lambda^*$ through the local reciprocity map $F^{\times}_0\rightarrow \Gal{F}{F_0}$. 
As the kernel of the reciprocity map is the group of norms $N_{F/F_0}(F^{\times})$, we obtain that $\lambda\lambda^*$ is not  a norm.

%%%%%%%%%%%%%%%%%%%%%%%%%%%%%%%%%%%%%%%%%%%%%%%%%%%%%%%%%%%%%%%%%%%%%%%%%%%%%%%%%%%%%%%%%%%%%%%%%%%%%%%%%%%%%%%
\begin{remark}
The main result of \cite{MR4199442} is an instance of Theorem \ref{thmintromot}, namely the case where the motive $M$ has rank two. In that particular case the field $F$ is   a quadratic extension of $\Qp$.
As there  are finitely many such extensions, it was possible in \cite{MR4199442}*{Section 12} to compute the period $\lambda$ case by case.
Moreover, those periods could be described in terms of Lubin--Tate periods and elements algebraic over $F$.

In higher rank, $F$ is a quadratic extension of $F_0$,
whose degree over $\Qp$ is equal to the half of the rank of the motive,  with 
{\itshape a priori} no restriction on the ramification of $F_0/\Qp$ nor of $F/F_0$.
Hence an analysis case by case is impossible.
Moreover, the period $\lambda$ is computed in terms of Lubin--Tate periods and a non-explicit element in $\widehat{F}\nr$,  the completion of the maximal unramified extension of $F$. In general such an element is  
transcendental over $F$, see Remark \ref{rk:transcendente}.
\end{remark}

\subsection*{Organisation of the paper} 
We begin by setting up notation and conventions in Section \ref{sect:Conv}.
In Section \ref{sect:MS} we give the precise statement of Theorem \ref{thmintromot} and explain how to deduce Theorem \ref{thmintro} from it. In Section \ref{sect:Arch2padic} we recall classical results on quadratic forms and use them, combined with general theorems on $\ell$-adic cohomology, to translate Theorem \ref{thmintromot} into a $p$-adic question. Section~\ref{sect:SObj} presents the first reduction step. We extend the coefficients of the motive from $\QQ$ to $\Qp$ and decomposes it  into a sum of direct factors. The goal of the section is to show that it is enough to study each factor separately.

In the rest of the paper we fix one of these factors. It is endowed with the action of a finite extension $F$ of $\Qp$. Section \ref{sect:CQ-qf} recalls classical results on CM quadratic forms over the $p$-adic numbers. At the end of the section we define the re-normalisation factor.
 This is used in Section~\ref{sect:Rtg} where we study the behaviour of the problem under tensor product. This allows to reduce the problem to motives whose crystalline realization has small Hodge weights. Section \ref{sect:p-adic_periods} gives a characterization  of the $p$-adic periods of those motives. This allows in Section \ref{sect:LT_and_reciprocity} to describe them explicitly by using Lubin--Tate periods and to apply Dwork's theorem  computing the local reciprocity.

In Section \ref{sect:proof_MT} we put all the ingredients together and give the proof of the main result. This section can also be read first in order to have a global picture of the strategy.

We finish with an appendix concerning the filtered $\Fro$-modules attached to a Lubin--Tate character of a $p$-adic field. 
We added it to keep the paper as self-contained as possible, by giving an alternative  construction of Lubin--Tate periods of \cite{MR1956055}.

\subsection*{Acknowledgements} We thank Eva Bayer for her explanations on CM quadratic forms and pointing out the reference \cite{MR249519}.  We thank Pierre Colmez, Lionel Fourquaux and Stefano Morra for discussions on Lubin--Tate formal groups and their filtered $\Fro$-modules.
%We thank Emiliano Ambrosi for useful comments on a preliminary version of the text. We thank Daniel Kriz for interesting remarks on this work and for a careful reading of the first arXiv version of the paper. 
We thank Emiliano Ambrosi and Daniel Kriz for useful comments on  preliminary versions of the text.
We thank the referees for their careful readings and relevant remarks.

 Some years ago we had some illuminating conversations on $p$-adic periods with our colleague Jean-Pierre Wintenberger. We dedicate this work to his memory.

%This research was partly supported by the grant ANR--23--CE40--0011 of \emph{Agence National de la Recherche}. 

%%%%%%%%%%%%%%%%%%%%%%%%%%%%%%%%%%%%%%%%%%%%%%%%%%%%%%%%%%%%%%%%%%%%%%%%%%%%%% 
\section{Conventions} \label{sect:Conv}

Throughout the paper we will work with the following notation and conventions.

%\begin{enumerate}

\subsection{Involutions}\label{choucroute} We  usually denote by $*$ an involution acting on a set.   By abuse of notation we will still write  $*$ for an endomorphism of a ring $B$ extending an involution of a subfield $F\subset B$. We will denote by $F_0$ the subfield of $F$ where $*$ acts as the identity.

We will often write $z^*$ for the image $*(z)$ of an element $z$ through $*$.

\subsection{$p$-adic fields.} \label{sb:Conv_p-adic_fields} 
Let $p$ be a prime number. In this text,
by $p$-adic field we mean a finite extension of the field of $p$-adic numbers $\Qp$. For such a field $L$, we denote by $\mca{O}_L$ its ring of integers and by  $k_L$ the residue field. The degree of $k_L$ over $\Fp$ is called the residual degree of $L/\Qp$ and it will be denoted by $f_L$. We denote\footnote{In literature $\La$ is often denoted with subscript $0$ instead of $a$, but the former is already used in the context of involutions, as Convention \ref{choucroute}. 
	% we reserve the notation $L_0$ for another subfield (namely the field of fixed points by an involution, see Section \ref{sect:CQ-qf}). 
	The subscript ``$a$'' stands for ``absolute unramified'' as $\La$ is also called the absolute unramified subextension of $L$.} 
by $\La$ the maximal subfield of $L$ unramified over $\Qp$. It is equal to the field of fractions of the ring of Witt vectors $W(k_L)$. The degree $e_L\coloneqq [L:\La]$ is called the ramification index of $L/\Qp$, or the \emph{absolute} ramification index of $L$.   
When there is no risk of confusion, we may drop the subscript $L$ in the notation.

We choose an algebraic closure $\Qpbar$ of $\Qp$ and we denote by $\overline{\mathbb{F}}_p$ its residue field. 
We put $\widehat{\QQ}_p\nr\coloneqq \Frac W(\overline{\mathbb{F}}_p)$. It is a completion of the maximal unramified extension $\Qp\nr$ of $\Qp$ in $\Qpbar$. The fields $\widehat{\QQ}_p\nr$ and $\Qp\nr$ are endowed with an automorphism $\Fro$, called the absolute Frobenius, which is the unique map lifting the $p$-power map on the residue field $\overline{\mathbb{F}}_p$.
For any subextension $L$ of $\Qpbar/\Qp$ we put $G_{L}\coloneqq \Gal{\Qpbar}{L}$.

%\item \textbf{Motives.} 
\subsection{Motives.} \label{sb:Conv_Motives}
We will work with the category $\CHM(S)_{\QQ}$ of homological motives over a base $S$ with coefficients in $\QQ$.  
This category is defined as the quotient (in the sense of \cite[\S2.3]{And05})  of the category of Chow motives over $S$, {\itshape cf.}~\cite[\S5.1]{MR2795752}, with respect to the homological equivalence. 

In our article the base $S$ will be $\CC$, a finite field or the ring of integers of a $p$-adic field.
For generalities on $\CHM(S)_{\QQ}$ when $S$ is a field, we refer to  \cite{MR2115000}*{\S4}. 
When $S$ is the ring of integers of a $p$-adic field, see also the conventions in \cite{AncFra}. %[\S2(5)] 
In general these categories depend on the chosen Weil cohomology, but we will use only cohomologies for which the classical comparison theorems will ensure that those are independent of the choice.

We will also use the quotient category of motives with respect to numerical equivalence and denote it by $\NUM(S)_\QQ$. We will use it only with $S$ being a finite field.
%
%The classical case where $S$ is a field is extensively treated in the book \cite{MR2115000}*{\S4}.
%When $S$ is a field, see  \cite{MR2115000}*{\S4} for generalities on these categories.

\subsection{Realisations} \label{sb:Conv_Realis}
We will make use of the classical realization functors, namely  the de Rham realization $\RdR$, the Betti realization $R_{B}$, the $\ell$-adic realization $R_{\ell}$ and the crystalline realization $\Rcris.$ We consider them with  their enriched structures, as in \cite[\S 7.1]{MR2115000}. In particular, the functor  $\RdR$ will land in the category of filtered vector spaces,   $R_{B}$  in the category of Hodge structures, $R_{\ell} $   in the category of Galois representations and  $\Rcris$ in the category of modules endowed with an action of an absolute Frobenius.

For a motive $M$, we will denote by $\dim M$ the dimension of any classical realization of $M$ and we call it the dimension of the motive.

%\item  \textbf{Unit object.} 
\subsection{Unit object.} The unit object in any tensor category we will consider will be denoted by $\one$.

\subsection{Filtered Modules} \label{sb:Conv_Fil}
Let $M$ be a module over some ring and $(\Fil M)_{i\in\ZZ}$ a decreasing, exhaustive and separated  filtration by submodules. 
We will use the following conventions: for any subset $S\subset M$, we put  
$$\vdR(S)\coloneqq \sup\{i\in\ZZ|\, S \subset \Fil[i]M\}.$$
For $m$ in $M$, we write $\vdR(m)\coloneqq\vdR(\{m\})$.
(The subscript $\mathrm{dR}$ comes from the fact that the filtrations in this article will  be the de Rham filtration on some cohomology group or on some ring of periods.)

%\item \textbf{$p$-adic Hodge theory.} 

\subsection{$p$-adic Hodge theory.}  \label{sb:Conv_pHT}
We denote  Fontaine's rings of periods (associated with $\Qpbar/\Qp$) by \Bdr\ and \Bcris.
%We denote by \Bdr\ and \Bcris\ the Fontaine's rings of periods (associated with $\Qpbar/\Qp$). 
They were introduced in \cites{MR657238,MR1293971}. 
We gather below some properties of these rings that we will use in this article. 
For a detailed account on these rings and on $p$-adic Hodge theory 
we refer for example to \cites{MR1922833,fontaine_ouyang}. 
\begin{enumerate}
\item \label{Bdr_1}
\Bdr\ is a complete discrete valuation field, its ring of integer is denoted by $\Bdr^+$ and the 
 residual field identifies to  the $p$-adic completion $\Cp$ of $\Qpbar$. 
\item \label{Bdr_2}
\Bdr\ is filtered by the (fractional) powers of the maximal ideal of $\Bdr^+$, and we denote this filtration by 
$(\Fil[i]\Bdr)_{i\in\ZZ}$. The map $\vdR$ defined in Convention \ref{sb:Conv_Fil} on $\Bdr$  is its discrete valuation.

\item \label{Bdr_3}
$\Bdr$ is endowed with an action of $G_{\Qp}$.
We have canonical inclusions $\Qpbar\subset \Bdr$ and $\widehat{\QQ}_p\nr \subset \Bdr$ compatible with the action of $G_{\Qp}$.

\item \label{Bcris_1}
$\Bcris \subset \Bdr$ is a sub-$\widehat{\QQ}_p\nr$-algebra stable under the $G_{\Qp}$-action.

\item \label{Bcris_2}
We have an endomorphism $$\Fro[\mathrm{cris}]\colon \Bcris\rightarrow \Bcris,$$ semilinear with respect to the absolute Frobenius $\Fro$ of $\widehat{\QQ}_p\nr$. We call it the Frobenius of $\Bcris$ and denote it simply by $\Fro$.

\item \label{Bcris_3}
There exists an element $t\in \Bcris$, such that $$\Fro(t)=pt\quad \text{ and }\quad t\in\Fil[1]\Bdr\,
 \text{ (actually }\Fil[1]\Bdr= t\Bdr^{+}\text{).}$$ 
%as ideal of $\Bdr^+$). 
For any $g$ in $G_{\Qp}$, we have $$g(t)=\chi(g)t,$$ where $\chi\colon G_{\Qp}\rightarrow \Zp^{\times}$ denotes the cyclotomic character.
We set $\Qp(1)\coloneqq\Qp\cdot t\subset\Bcris$. %  
%which is a rank one $p$-adic representation of $G_{\Qp}$.
\end{enumerate}

 Let $L\subset \Qpbar$ be a  finite extension over $\Qp$. %and $f$ its residual degree. 
Keep the notation of Convention \ref{sb:Conv_p-adic_fields}. 

\begin{enumerate}[resume]
\item \label{BcrisE1}
We will denote  by $\BcrisL$ the smallest subring of \Bdr\ containing \Bcris\ and $L$. 
We recall that the natural map 
$$\Bcris\otimes_{\La}L\xrightarrow{\,\simeq\,} \BcrisL\subset \Bdr$$
is an isomorphism. 
\item \label{BcrisE3}
The Frobenius of $\Bcris$ does not extend to $\BcrisL$ in general, nevertheless its power  $\Fro[\mathrm{cris}]^{f_L}$ is 
$\La$-linear, hence we can extend it to \BcrisL\ by 
$ \Fro[\mathrm{cris}]^{f_L} \otimes \id[L]$. We will denote it still by $\Fro[\mathrm{cris}]^{f_L}$  or $\Fro^{f_L}$. Again, we may drop the subscript $L$ in the notation if there is no risk of confusion.

\end{enumerate}

 %\end{enumerate}

%%%%%%%%%%%%%%%%%%%%%%%%%%%%%%%%%%%%%%%%%%%%%%%%%%%%%%%%%%%%%%%%%%%%%%%%%%%%%%%%%%%%%%%%%%%%%%%%%%%%%%%%%%%%%%%%%%%%%%%%
\section{Main statements} \label{sect:MS}

In this section we state our main result  (Theorem \ref{mainthm}) and  then give some geometric consequences. To put the result into perspective, we formulate a conjecture (Conjecture \ref{mainconj}) and discuss the role of the different hypothesis (Remark \ref{rem hypo}).

\begin{conj}\label{mainconj}
Let $K$ be a $p$-adic field, $\mathcal{O}_K$  its ring of integers and $k$  its residue field.
Let $(M,q)$ be a motive in mixed characteristic \[M \in \CHM(\mathcal{O}_K)_\QQ\] 
together with a map
$q\colon \Sym^2 M  \rightarrow \one.$
 Write $\cdot_{/_{k}}$ for the restriction functor  to the category $\CHM(k)_\QQ$ and $R_B$ for the Betti realization induced by a fixed embedding   $\sigma : K \hookrightarrow \CC$.
 
Define   two $\QQ$-quadratic  spaces $(V_B,q_B)$ and $(V_Z,q_Z)$    as follows
\[(V_B,q_B) \coloneqq R_B(M, q ),  \hspace{0.5cm}
V_Z \coloneqq \Hom{\NUM(k)_{\QQ}}{\one}{ M_{/_{k}}},\] 
\[\text{ and} \hspace{0.5cm} q_Z\colon V_Z \rightarrow \End{}{\one}=\QQ, \hspace{0.5cm} \text{defined by} \hspace{0.5cm} z\mapsto q_{/_{k}} \circ \Sym^2(z). \]
	
 %\[ q_Z(-)\coloneqq (q_{/_{k}} \circ \Sym^2(-)).\]

We conjecture that if   $q_B$   is a polarization of Hodge structures then  $q_Z$  is positive definite.
\end{conj}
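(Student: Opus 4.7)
The conjecture asks for full positive-definiteness of $q_Z$; a more accessible first target is the ``mod 4 shadow'', i.e., that the negative signature $s_-$ of $q_Z$ is divisible by four. Since $s_-\pmod 4$ is determined by the discriminant of $q_Z$ (in $\QQ^{\times}/(\QQ^{\times})^2$) together with the Hilbert invariant at the archimedean place, the first step of the plan is to compute these two global invariants and show that they match the corresponding invariants of the polarised form $q_B$, for which $s_-=0$ by hypothesis. Both are global-to-local questions, to be attacked by Hasse--Minkowski and the product formula $\prod_v (a,b)_v = 1$ for Hilbert symbols, reducing the task to pinning down the localisation $q_Z\otimes \QQ_v$ at every place $v$.

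At the finite primes $\ell\ne p$ the $\ell$-adic realisation is well-behaved: the cycle class map identifies $V_Z\otimes\Ql$ with a Galois-stable subspace of $R_\ell(M_{/k})\otimes\Ql \simeq R_B(M)\otimes_\QQ\Ql$ carrying the restriction of $q_B\otimes\Ql$, which suffices to read off both the local discriminant and the local Hilbert symbol at $\ell$ from the polarised Hodge side. The delicate place is $p$: here Faltings' crystalline comparison theorem provides a period isomorphism $R_B(M)\otimes\Bcris\simeq \Rcris(M_{/k})\otimes\Bcris$ compatible with quadratic forms, and $V_Z\otimes\Qp$ sits inside $\Rcris(M_{/k})$ as the subspace of Frobenius-invariant classes lying in $\Fil[0]$. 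The two local forms $q_Z\otimes\Qp$ and $q_B\otimes\Qp$ are therefore related by a ``period matrix'' in $\Bcris$, and the plan is to read off from this matrix the local invariants of $q_Z\otimes\Qp$. Under a CM-type hypothesis on $M$ I would mimic the strategy sketched in the introduction: use the CM action to reduce the matrix to a single period $\lambda\in\BcrisF^{\times}$, characterise $\lambda$ by its behaviour under $\Fro$ and the de Rham filtration via the fundamental exact sequences of $p$-adic Hodge theory, describe it explicitly in terms of Lubin--Tate periods \`a la Colmez, and finally compute the image of $\lambda\lambda^{*}$ under the local reciprocity map $F_0^{\times}\rightarrow\Gal{F}{F_0}$ using Dwork's formula, so as to decide whether it is a norm.

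The main obstacle is twofold. First, even for the mod-4 statement the CM (or similarly rigid) hypothesis seems essential: without a sufficiently commutative algebra acting on $M$, the period matrix does not collapse to a single $\lambda$, and the Lubin--Tate/Dwork route loses its grip, so a genuinely new input from $p$-adic Hodge theory is needed to control Hilbert symbols of arbitrary polarised filtered $\Fro$-modules. Second, there is a gap between the mod-4 statement and the conjecture itself, since $s_-\equiv 0\pmod 4$ does not imply $s_-=0$; closing this gap presumably requires a positivity input analogous to the Hodge--Riemann relations or to the Rosati positivity used in the K3 case of \cite{IIK}, rather than a purely arithmetic manipulation of periods. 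A realistic version of the plan is therefore incremental: prove the mod-4 shadow in progressively weaker ``CM-type'' settings (as Theorem \ref{thmintromot} does), and only then attempt to bridge from divisibility by four to genuine positive-definiteness by importing a separate source of positivity.
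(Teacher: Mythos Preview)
The statement is a conjecture, not a theorem, and the paper does not prove it; it proves only the mod-4 shadow (Theorem~\ref{mainthm}) under CM and supersingularity hypotheses. Your proposal is therefore a research plan rather than a proof, and as such it tracks the paper's strategy fairly closely. But there is one genuine error in your framing that would derail the argument.

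You write that $q_B$ has $s_-=0$ ``by hypothesis'' and that the plan is to show the invariants of $q_Z$ \emph{match} those of $q_B$. This is wrong: a polarisation of a weight-zero Hodge structure is not positive definite. The Hodge--Riemann relations give $s_-(q_B)=\sum_{i\ \mathrm{odd}} h_i$, which is even by Hodge symmetry but typically nonzero, so $\varepsilon_\RR(q_B)=(-1)^{s_M}$ with $s_M=\sum_{i\ge 1,\ \mathrm{odd}} h_i$ (Proposition~\ref{padicreduction}). Consequently the target is not $\varepsilon_p(q_Z)=\varepsilon_p(q_B)$ but rather
\[
\varepsilon_p(q_Z)/\varepsilon_p(q_B)=(-1)^{s_M},
\]
i.e.\ the $p$-adic Hilbert symbols must \emph{differ} by exactly the archimedean sign of $q_B$; only then does the product formula force $\varepsilon_\RR(q_Z)=+1$. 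This is why the crux of the paper (Theorem~\ref{th:not_a_norm}) is that $\lambda\lambda^*$ is \emph{not} a norm in the fundamental case: the two CM-quadratic forms at $p$ are genuinely non-isomorphic, and this non-isomorphism is what produces the needed sign. Your later paragraphs (Lubin--Tate periods, Dwork's formula, ``decide whether it is a norm'') are compatible with this, but your opening paragraph sets up the wrong target.

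A second, smaller point: your $\ell$-adic step says the cycle class map lands $V_Z\otimes\Ql$ in a \emph{subspace} of $R_\ell(M)$. For the product-formula argument to run you need $q_Z\otimes\Ql\cong q_B\otimes\Ql$ as forms of the \emph{same} rank, and this uses the supersingularity hypothesis $\dim_\QQ V_Z=\dim M$ (Lemma~\ref{supersing}), which forces $V_Z\otimes\Ql$ to be the whole of $R_\ell(M_{/k})$. Without it the two forms have different ranks and cannot be compared via Corollary~\ref{signaturetop}. Your discussion of obstacles correctly flags the CM hypothesis but not this dimension issue.
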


\begin{theorem}\label{mainthm}
Let $M,q,q_B$ and $q_Z$ be as in Conjecture $\ref{mainconj}$.

Let $F$ be a number field which acts on $M$ and which is endowed with  a non trivial involution $*$.
Assume the following:
\begin{enumerate}
\item \label{mainthm:1} The equality  $ \dim_{\QQ} V_Z = \dim M$ holds.

\item \label{mainthm:2} The equality  $ [F:\QQ] = \dim M$ holds.

\item \label{mainthm:3} For every $\cf$ in $F$ the adjoint of $\cf$ with respect to $q$ is $\cf^*$.

\item \label{mainthm:4} The pairing $q_B$ on $V_B$ is a polarization of Hodge structures.

\end{enumerate}

Then the signature $(s_+,s_-)$ of the quadratic form $q_Z$ satisfies $4 \vert s_-$.

\end{theorem}
\noindent The proof of Theorem \ref{mainthm} is given is Section \ref{sect:proof_MT}.

\begin{remark} \label{rem hypo}
Hypothesis (4) is  crucial and absolutely necessary. Indeed, in order to conclude a positivity statement one has to assume some positivity property.
The action of $F$ makes the hypothesis (1)  essentially automatic. Indeed if $ \dim V_Z \neq \dim M$ then   $V_Z=0$ and the statement is trivial. See  \cite[proof of Proposition 6.8]{MR4199442}
 for details.

The statement should be true without the hypothesis (2) and (3), see Conjecture \ref{mainconj}.
On the other hand, the standard conjecture of Hodge type can be reduced to the case of varieties over finite fields  \cite[Proposition 3.16]{MR4199442} and for such a variety  the Tate conjecture predicts that its motive should be a motive of abelian type \cite{MR1265538}. In particular the hypothesis of Theorem \ref{mainthm} should not be restrictive in the study of  the standard conjecture of Hodge type.

Finally,  homological and numerical equivalence should always coincide hence one would like to replace $V_Z$ with the space of cycles modulo homological equivalence. The following proposition is the crucial reason for which one needs to work with numerical equivalence.
%: its analogue with homological equivalence is out of reach.
\end{remark}

\begin{lemma}\label{supersing}
Keep notation from Theorem $\ref{mainthm}$. Then the hypothesis $(1)$ is equivalent to the existence of an isomorphism 
$M_{/_{k}}\cong \one^{\oplus \dim M}$ of homological motives.
In particular $V_B$ and all classical realizations of $M$ have weight zero and if $E$ is the field of coefficients of a given realization $R$, then
\[  R(M_{/_{k}})= V_Z \otimes_{\QQ} E,  \hspace{0.5cm}  \textrm{and} \hspace{0.5cm} R(q_{/_{k}})= q_Z \otimes_{\QQ} E. \]
\end{lemma}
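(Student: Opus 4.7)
The backward implication is immediate: if $M_{/_{k}}\cong\one^{\oplus\dim M}$ in $\CHM(k)_\QQ$, then $V_Z=\Hom{\NUM(k)_\QQ}{\one}{\one^{\oplus\dim M}}=\QQ^{\oplus\dim M}$, using that $\End{\NUM(k)_\QQ}{\one}=\QQ$. For the forward direction, the plan is first to derive from a dimension count that the $\ell$-adic cycle class map out of $V_Z$ is an isomorphism, and then to use the polarization to upgrade this into an isomorphism $M_{/_{k}}\cong\one^{\oplus\dim M}$ in $\CHM(k)_\QQ$.

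Concretely, fix $\ell\neq\mathrm{char}(k)$ and consider the chain of inequalities
\[\dim_\QQ V_Z\;\leq\;\dim_\QQ\Hom{\CHM(k)_\QQ}{\one}{M_{/_{k}}}\;\leq\;\dim_{\Ql} R_\ell(M_{/_{k}})^{G_k}\;\leq\;\dim_{\Ql} R_\ell(M_{/_{k}})=\dim M.\]
The first inequality holds because $V_Z$ is a quotient of $\Hom{\CHM(k)_\QQ}{\one}{M_{/_{k}}}$ (numerical equivalence is coarser than homological); the second is the injectivity of the $\ell$-adic cycle class map, whose image lies in the $G_k$-invariants; the third is trivial. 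Hypothesis $(1)$ forces all inequalities to be equalities, so: homological and numerical equivalence agree on cycles of $M_{/_{k}}$, $G_k$ acts trivially on $R_\ell(M_{/_{k}})$, and every $\Ql$-class is algebraic. A $\QQ$-basis of $V_Z$ thus lifts canonically to morphisms $\one\to M_{/_{k}}$ in $\CHM(k)_\QQ$, producing $\alpha\colon\one^{\oplus\dim M}\to M_{/_{k}}$ whose $\ell$-adic realization $R_\ell(\alpha)$ is an isomorphism of $\Ql$-vector spaces.

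The main obstacle is promoting this vector-space isomorphism to an isomorphism $\alpha$ in $\CHM(k)_\QQ$, which is not automatic (it is a kind of fullness statement for the realization). Here hypothesis $(4)$ resolves the difficulty: since $q_B$ is a polarization, $q$ is non-degenerate on every classical realization, and hence $q\colon M\to M^\vee$ is itself an isomorphism in $\CHM$. Setting $\beta\coloneqq\alpha^\vee\circ q\colon M_{/_{k}}\to\one^{\oplus\dim M}$, the composite $\beta\circ\alpha\in\End{\CHM(k)_\QQ}{\one^{\oplus\dim M}}=\mathrm{Mat}_{\dim M}(\QQ)$ has invertible $\ell$-adic realization, hence is invertible as a rational matrix. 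Thus $\alpha$ admits a left inverse in $\CHM(k)_\QQ$, and since $R_\ell(\alpha)$ is a bijection between $\Ql$-spaces of equal dimension, $\alpha$ is an isomorphism, giving $M_{/_{k}}\cong\one^{\oplus\dim M}$ in $\CHM(k)_\QQ$. The ``in particular'' statements then follow by applying any classical realization $R$ with coefficients in $E$ to this isomorphism: one obtains $R(M_{/_{k}})\cong E^{\oplus\dim M}$ with the trivial structure appropriate to $R$ (hence weight zero); the good reduction of $M$ plus the standard comparison theorems identify $R(M)$ with $R(M_{/_{k}})$, so every classical realization of $M$ --- and in particular the Hodge structure $V_B=R_B(M)$ --- is of weight zero. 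The identifications $R(M_{/_{k}})=V_Z\otimes_\QQ E$ and $R(q_{/_{k}})=q_Z\otimes_\QQ E$ then follow by tracking a basis of $V_Z$ through $R$ and applying functoriality.
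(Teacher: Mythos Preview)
Your proof is correct but takes a genuinely different route from the paper. The paper dispatches the lemma in one line by invoking Jannsen's theorem that $\NUM(k)_\QQ$ is semisimple: the evaluation map $V_Z\otimes\one\to M_{/_{k}}$ is then a split monomorphism in $\NUM(k)_\QQ$ whose cokernel has categorical dimension zero, hence vanishes. This yields the isomorphism in $\NUM(k)_\QQ$, and the ``in particular'' clause follows from the same dimension count you carry out. By contrast, you work entirely in $\CHM(k)_\QQ$ and avoid Jannsen, using instead hypothesis~(4) together with the faithfulness of $R_\ell$ to promote the realization-level isomorphism to one of homological motives. Your approach is more elementary and yields a nominally stronger conclusion (the isomorphism holds already in $\CHM(k)_\QQ$), at the cost of being longer and of consuming the polarization hypothesis; the paper's is shorter and more conceptual but leans on a deep theorem.

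Two minor imprecisions are worth tightening. First, you assert that $q\colon M\to M^\vee$ is an isomorphism \emph{in $\CHM$}; this is not justified at that point (faithfulness of $R_\ell$ is not conservativity), but fortunately you never use it: to define $\beta=\alpha^\vee\circ q$ and conclude that $R_\ell(\beta\alpha)$ is invertible, it suffices that $q$ be a morphism with $R_\ell(q)$ invertible, which is what you actually invoke. Second, the deduction ``$\alpha$ has a left inverse and $R_\ell(\alpha)$ is bijective, hence $\alpha$ is an isomorphism'' deserves one more sentence: if $\gamma\alpha=\id$, then $\id-\alpha\gamma$ is an idempotent whose $\ell$-adic realization vanishes, so by faithfulness of $R_\ell$ on homological motives it is zero, whence $\alpha\gamma=\id$ as well.
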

\begin{dem}
As numerical motives form a semisimple category ({\itshape cf.} \cite{MR1150598}), the hypothesis $(1)$ is equivalent to the existence of an isomorphism 
$M_{/_{k}}\cong \one^{\oplus \dim M}$ of numerical motives. Let us fix such an isomorphism $f: M_{/_{k}}\rightarrow \one^{\oplus \dim M}$   and let $g$ be its inverse. Consider a map $\tilde{f}: M_{/_{k}}\rightarrow \one^{\oplus \dim M}$ at the level of homological motives whose reduction modulo numerical equivalence is $f$ and similarly consider $\tilde{g}$ a lifting of $g$.

The ring of endomorphisms of the unit object $\one$ is $\QQ$, both under numerical or homological equivalence. In particular, passing from homological to numerical equivalence in such an endomorphisms ring does not kill any nonzero map. 
Hence, the composition $\tilde{f} \circ \tilde{g}$  is the identity, as so is $f \circ g$.
This implies that the map  $\tilde{g} \circ \tilde{f}$ is a projector inducing a decomposition of the form
$M_{/_{k}} \iso \one^{\oplus \dim M} \oplus N$ for some homological motive $N$.
For dimensional reasons, the realization of $N$ is zero, and so is $N$ by the very definition of homological motives.

 Conversely, the existence of an homological isomorphism   
$M_{/_{k}}\cong \one^{\oplus \dim M}$  clearly implies hypothesis $(1)$.    
%
%This follows from the fact that numerical motives form a semisimple category \cite{MR1150598}.
\end{dem}
Let us now conclude the section with some applications of Theorem \ref{mainthm}.

\begin{cor}\label{cor:AV}
Let $A$ be an abelian variety over a finite field, $\widetilde{A}$ be a CM-lifting, $\widetilde{L}$ be a hyperplane section of $\widetilde{A}$ and  
  $L$ be the restriction of  $\widetilde{L}$ to $A$. For  a positive integer $n\leq \dim(A)/2$,
 let $(s_+,s_-)$ be the  signature of the quadratic form $q_{A,L,n}$ from Definition $\ref{def intro}$, then  $s_-$ is divisible by $4$.
\end{cor}
\begin{dem}
We argue as in  \cite[\S 8]{MR4199442}
 and use the complex multiplication to decompose the motive of $A$ in an orthogonal sum of smaller motives. Among these factors, the ones we need to study are the so called exotic. They all fit in the hypothesis of Theorem \ref{mainthm}.
\end{dem}

%\smallskip
\begin{remark}
%	\begin{enumerate}
%		\item 
%It is known that the standard conjecture of Hodge type can be reduced to the case of finite fields, see for example \cite[Remark 3.17]{MR4199442}.
%	\item  
	A priori Corollary \ref{cor:AV} does not apply to a hyperplane sections $L$ which does not lift to characteristic zero. Nevertheless it is possible  
	sometimes to extend it to all hyperplane sections, as for example  in  \cite[Proposition 3.15]{MR4199442} and \cite[Lemma 2.2]{Koshi_HSC_prime}. 
%	\end{enumerate}
% In conclusion, these hypothesis in the above corollary should be thought as reasonable. 
%On the other hand, as already mentioned in Remark \ref{rem hypo}, $4 \vert s_-$ is still far from $s_-=0.$
\end{remark}

\begin{cor}
Let $X=S_1\times   \ldots \times S_m$ be the product of  K3 surfaces over  a finite field, $L_i$ be a hyperplane section induced, as in Corollary \ref{cor:AV}, by a  CM lifting of $S_i$, 
  and let $L=\oplus_i p^*_i L_i$ be the induced hyperplane section on $X$. Let $n \leq \dim(X)/2 =m$ be any integer and   
  $(s_+,s_-)$ be the  signature of the quadratic form $q_{X,L,n}$ from Definition $\ref{def intro}$, then  $s_-$ is divisible by $4$.
\end{cor}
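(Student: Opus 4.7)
The plan is to mirror the argument sketched for the abelian variety corollary, using the K3-theoretic CM lifting to reduce to Theorem \ref{mainthm} piece by piece.

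First, I would invoke \cite{MR4241794} to lift each polarised K3 surface $(S_i, L_i)$ to a CM polarised K3 surface $(\tilde S_i, \tilde L_i)$ in mixed characteristic. The motive of $X$ then admits a mixed-characteristic model $M = \bigotimes_{i=1}^{m} h(\tilde S_i)$ in $\CHM(\mathcal{O}_K)_\QQ$ for some suitable $p$-adic ring of integers $\mathcal{O}_K$. On each factor I use the Chow--Künneth decomposition $h(\tilde S_i) = \one \oplus h^2(\tilde S_i) \oplus \one(-2)$ together with the further orthogonal decomposition $h^2(\tilde S_i) = \mathrm{NS}(\tilde S_i) \oplus T(\tilde S_i)$ of the middle piece into its Néron--Severi part and its transcendental motive. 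The CM hypothesis on $\tilde S_i$ provides a CM number field $F_i$ acting on $T(\tilde S_i)$ with $[F_i:\QQ] = \dim T(\tilde S_i)$, and complex conjugation on $F_i$ coincides with the adjoint involution of the intersection product.

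Second, expanding the tensor product and grouping summands by the action of the étale $\QQ$-algebra $F_1 \otimes_\QQ \cdots \otimes_\QQ F_m = \prod_\alpha F_\alpha$, the primitive part of $M(n)$ with respect to $L$ inherits an orthogonal decomposition $\bigoplus_\alpha (M_\alpha, q_\alpha)$ compatible with the $F_\alpha$-actions. Following the dichotomy of \cite[\S 8]{MR4199442}, I would classify each summand as either \emph{exotic} --- $F_\alpha$ is a CM field of degree $\dim M_\alpha$ on which the adjoint involution of $q_\alpha$ acts as complex conjugation --- or \emph{non-exotic}. For each exotic piece I would verify the hypotheses of Theorem \ref{mainthm}: (2) and (3) hold by construction of $F_\alpha$; (4) is the polarisation property of the primitive intersection product, preserved by restriction to an $F_\alpha$-isotypic factor; and (1) follows from Lemma \ref{supersing} combined with the supersingularity of the special fibre of each CM K3 (the K3 analogue of the Serre--Tate result used in the abelian case). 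Theorem \ref{mainthm} then yields $4 \mid s_-^\alpha$ for every exotic $\alpha$.

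Third, for the non-exotic summands I would argue that either $V_{Z,\alpha} = 0$ --- so the piece does not contribute to the signature at all --- or the piece has motivic rank two and is covered by \cite[Theorem 8.1]{MR4199442}, which already implies $4 \mid s_-^\alpha$ in that rank. Summing over $\alpha$ then produces $4 \mid s_- = \sum_\alpha s_-^\alpha$. The main obstacle is the combinatorial bookkeeping of the decomposition $\bigoplus_\alpha M_\alpha$: one must check carefully that every non-exotic summand (in particular those mixing transcendental factors $T(\tilde S_i)$ with Néron--Severi pieces, and those where the induced involution on $F_\alpha$ degenerates) either falls under the rank-two result of \cite[Theorem 8.1]{MR4199442} or has vanishing $V_{Z,\alpha}$. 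This is a direct transposition of the abelian variety analysis of \cite[\S 8]{MR4199442} to the K3 setting, once the CM lifting of \cite{MR4241794} provides the transcendental CM fields $F_i$.
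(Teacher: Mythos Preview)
Your overall strategy matches the paper's (very terse) proof, which simply says ``Similar to the case of abelian varieties'' and points back to the decomposition argument of \cite[\S 8]{MR4199442}. The shape of your outline---lift via \cite{MR4241794}, decompose the motive using the CM action, isolate the exotic summands, and feed each into Theorem~\ref{mainthm}---is exactly what is intended.

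There is, however, one incorrect justification. Your verification of hypothesis~(1) appeals to ``the supersingularity of the special fibre of each CM K3 (the K3 analogue of the Serre--Tate result).'' No such result exists: a K3 surface with complex multiplication over a $p$-adic field can perfectly well have ordinary reduction, just as CM abelian varieties need not reduce to supersingular ones. The correct argument is the one in Remark~\ref{rem hypo}: because $F_\alpha$ acts on $M_\alpha$ with $[F_\alpha:\QQ]=\dim M_\alpha$, the $F_\alpha$-module $V_{Z,\alpha}$ is either zero or all of $M_\alpha$; in the first case the summand contributes nothing to the signature, and in the second hypothesis~(1) holds. Lemma~\ref{supersing} is then a consequence of~(1), not a route to it.

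A smaller point: your dichotomy for the non-exotic pieces (``either $V_{Z,\alpha}=0$ or rank two'') is not quite the right bookkeeping. In the analysis of \cite[\S 8]{MR4199442} the non-exotic summands that do contribute are those on which the intersection form is already known to be positive definite by classical arguments (Lefschetz-type classes, pieces coming from the N\'eron--Severi part), so $s_-^\alpha=0$ there. You should phrase the trichotomy as: exotic (apply Theorem~\ref{mainthm}), positive definite for elementary reasons ($s_-^\alpha=0$), or $V_{Z,\alpha}=0$.
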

\begin{dem}
The argument follows the same lines as in  \cite[\S 8]{MR4199442}, namely we use the complex multiplication to decompose the motive into summands to which we can apply Theorem \ref{mainthm}.

First, each  $S_i$ has a CM lifting $\tilde{S}_i$ by \cite{MR4241794}.  By \cite{Kahn}, the motive $\mathfrak{h}(\tilde{S}_i)$ of $\tilde{S}_i$ admits a decomposition
\[\mathfrak{h}(\tilde{S}_i)=  \one \oplus  \one(-2) \oplus  \one(-1)^{\oplus \rho_i} \oplus  \mathfrak{h}^{t}(\tilde{S}_i) \]
where $\rho_i$ is the Picard number of $\tilde{S}_i$ and $\mathfrak{h}^{t}(\tilde{S}_i)$ is the motive whose realization is the transcendental part  $H^{2,t}(\tilde{S}_i)$ of the Hodge structure $H^2(\tilde{S}_i)$.

Now by \cite[Corollary 1.3]{MR4015230}, there is a CM field $L_i$ acting on $\mathfrak{h}^{t}(\tilde{S}_i)$ with the property that the $\mathbb{Q}$-dimensions of $L_i$ and $H^{2,t}(\tilde{S}_i)$ are the same. Hence, after extending the scalars to $\overline{\mathbb{Q}}$,  the motive $\mathfrak{h}^{t}(\tilde{S}_i)$ decomposes into a sum of motives of rank one, see   \cite[Proposition 6.6]{MR4199442}.
By the Künneth formula, such a decomposition in motives of rank one holds true for the whole motive of $X$, see  \cite[Proposition 6.7(1)]{MR4199442}. 
By taking the orbits under the  Galois action on the coefficients, we can  
descend such a decomposition into a decomposition with $\QQ$-coefficients, see  \cite[Proposition 6.7(2)]{MR4199442}. Notice that the factors are not anymore  of rank one in general but 
the  decomposition is orthogonal,  hence it is enough to work with a single factor $M$. 

 Assume that $M_{/_{k}}$ contains an algebraic class which is nonzero modulo numerical equivalence (otherwise the factor has no interest for the quadratic form $q_{X,L,n}$). Then $M$ satisfies the hypothesis \eqref{mainthm:1} of Theorem \ref{mainthm}, see \cite[Proposition 6.8]{MR4199442}. 
 %The other hypotheses are satisfied by construction, we can then apply  Theorem \ref{mainthm} and deduce the desired conclusion for the factor $M$.  
  By construction the hypotheses \eqref{mainthm:2} and \eqref{mainthm:3} are verified for the motives $\mathfrak{h}^{t}(\tilde{S}_i)$ and  one can check that 
they  are stable under the tensor operations above. 
Finally the hypothesis \eqref{mainthm:4} comes from the Hodge--Riemann bilinear relations. We can then apply  Theorem \ref{mainthm} and deduce the desired conclusion for the factor $M$. 
\end{dem}
\begin{remark}
There are probably other varieties to which Theorem \ref{mainthm} can be applied. As already mentioned in Remark \ref{rem hypo}, all motives over a finite field should be of abelian type, hence they should have a CM lifting \cite{MR1265538}. Some other examples for which this is known are cubic Fermat hypersurfaces \cite{MR552586}. In order to apply Theorem \ref{mainthm} to them one has to check that the decomposition induced by the CM action is orthogonal with respect to the quadratic form $q_{X,L,n}$. Similarly, one might try to study Kummer varieties or the Hilbert scheme of points on a K3.
\end{remark}

%%%%%%%%%%%%%%%%%%%%%%%%%%%%%%%%%%%%%%%%%%%%%%%%%%%%%%%%%%%%%%%%%%%%%%%%%%%%%%%%%%%%%%%%%%%%%%%%%%%%%%%%%%%%%%%%%%%%%%%%%%%%%%%%%%
\section{From archimedian to $p$-adic} \label{sect:Arch2padic}

We recall here some classical facts on quadratic forms. They will allow us to reduce Theorem \ref{mainthm} to a $p$-adic statement. 
In what follows $\QQ_\nu$ denotes the completion of $\QQ$ at the place $\nu$. Recall that at every place one defines  $\varepsilon_\nu(q) \in \{\pm 1\},$  the Hilbert symbol (or Hasse symbol) of a non-degenerate $\QQ$-quadratic form $q$ at $\nu$, {\itshape cf.} \cite[Ch.~IV, \S2.1 and \S2.4]{MR0498338}.

\begin{prop}\label{remarksignature} \cite[Ch.~IV, \S2.4]{MR0498338}.
Let $q$ be a non-degenerate $\QQ$-quadratic form and let $(s_+, s_-)$ be its signature. Then the discriminant is positive if and only if $2 \vert s_-.$ In this case,  $4 \vert s_-$ if and only if    $\varepsilon_\RR(q)=+1$.
\end{prop}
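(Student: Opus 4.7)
The plan is to reduce both equivalences to explicit computations on a $\QQ$-diagonalization of $q$. Writing $q \simeq \langle a_1,\dots,a_n\rangle$ with $a_i\in\QQ^\times$, I would first observe that both invariants in the statement (the sign of the discriminant and the Hilbert symbol at infinity) only depend on the classes of the $a_i$ in $\RR^\times/(\RR^\times)^2$, i.e.\ on their signs. By Sylvester's law of inertia, after reordering exactly $s_-$ of the coefficients are negative and $s_+$ are positive.

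For the first claim, the discriminant $\mathrm{disc}(q) = a_1\cdots a_n \in \QQ^\times/(\QQ^\times)^2$ has a well-defined image in $\RR^\times/(\RR^\times)^2=\{\pm 1\}$, equal to $(-1)^{s_-}$. Thus $\mathrm{disc}(q)>0$ if and only if $s_-$ is even, which gives the first equivalence.

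For the second equivalence, assume now $2\mid s_-$. I would use the standard multiplicativity formula for the Hilbert symbol of a diagonalized form,
\[
\varepsilon_\RR(q) \;=\; \prod_{i<j}(a_i,a_j)_\RR,
\]
together with the fact that $(a,b)_\RR = -1$ precisely when both $a,b<0$. Only pairs of negative entries contribute a $-1$, so the product counts (with sign) the unordered pairs of negative diagonal entries, giving
\[
\varepsilon_\RR(q) \;=\; (-1)^{\binom{s_-}{2}}.
\]
Writing $s_-=2m$, one has $\binom{2m}{2} = m(2m-1) \equiv m \pmod{2}$, hence $\varepsilon_\RR(q)=+1$ if and only if $m$ is even, that is, $4\mid s_-$.

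There is no serious obstacle: the argument is just a careful bookkeeping of signs in the two standard formulas for a diagonalized form. The only non-trivial input, recalled in the cited reference of Serre, is that $\varepsilon_\RR(q)$ is independent of the chosen diagonalization, which is what allows us to reduce the computation to counting negative entries via Sylvester.
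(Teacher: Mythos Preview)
Your proof is correct and is precisely the computation carried out in the cited reference \cite[Ch.~IV, \S2.4]{MR0498338}; the paper itself does not give an independent argument but simply invokes Serre. In particular your formulas $d_{\RR}(q)=(-1)^{s_-}$ and $\varepsilon_{\RR}(q)=(-1)^{\binom{s_-}{2}}$ are exactly those appearing there, so nothing needs to be added.
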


\begin{prop}\label{localinvariant} {\cite[Ch.~IV, \S 2.3, Theorem 7]{MR0498338}}. 
Two non-degenerate 
 $\QQ_p$-quadratic forms $q_1$ and $q_2$ of same rank are 
isomorphic 
if and only if the discriminants of $q_1$ and $q_2$ coincide  and 
$\varepsilon_p(q_1)=\varepsilon_p(q_2)$.
%Let $p$ be a prime number and $q_1$ and $q_2$   two non-degenerate $\QQ$-quadratic forms of same rank. Then 
%\[q_1 \otimes \QQ_p \cong q_2 \otimes \QQ_p\]
%if and only if the discriminants of $q_1$ and $q_2$ coincide in $\QQ_p^{\times}/(\QQ_p^{\times})^2$ and 
%\[\varepsilon_p(q_1)=\varepsilon_p(q_2).\]
\end{prop}

\begin{theorem}\label{productformula} {\cite[Ch.~IV, \S 3.1]{MR0498338}}. 
Let $q$ be non-degenerate $\QQ$-quadratic form. Then for all but a finite number of places $\nu$ the equality $\varepsilon_\nu(q)=+1$ holds. Moreover, the following product formula running on all places holds
\[\prod_\nu  \varepsilon_\nu(q)= +1.\]
\end{theorem}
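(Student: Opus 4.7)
The plan is to reduce the claim for a general quadratic form to the classical product formula for the pairwise Hilbert symbol $(a,b)_\nu$ on $\QQ^\times$, and then to prove that product formula via quadratic reciprocity and its two supplements.

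First I would diagonalise $q$ over $\QQ$: choose an orthogonal basis to write $q\simeq \langle a_1,\ldots,a_n\rangle$ with $a_i\in\QQ^\times$. By definition, the Hasse--Witt invariant is
\[\varepsilon_\nu(q) \;=\; \prod_{1\le i<j\le n}(a_i,a_j)_\nu,\]
which is independent of the chosen diagonalisation, a classical fact that one proves using bilinearity of the Hilbert symbol combined with Witt's chain equivalence theorem for diagonal forms.

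For the finiteness assertion, let $S$ be the finite set of places consisting of the archimedean place, the prime $2$, and all finite primes $\nu$ at which some $a_i$ is not a $\nu$-adic unit. For $\nu\notin S$, each Hilbert symbol $(a_i,a_j)_\nu$ equals $+1$: indeed, any non-degenerate ternary quadratic form over $\QQ_\nu$ with unit coefficients represents zero non-trivially when the residue characteristic of $\nu$ is odd, by Chevalley--Warning applied to the reduction mod $\nu$ together with Hensel's lemma. Hence $\varepsilon_\nu(q)=+1$ for all $\nu\notin S$, giving the finiteness statement.

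For the product formula itself, after exchanging the finite product over $i<j$ with the product over places,
\[\prod_\nu \varepsilon_\nu(q)\;=\;\prod_{i<j}\Bigl(\prod_\nu (a_i,a_j)_\nu\Bigr),\]
so it suffices to show $\prod_\nu (a,b)_\nu=+1$ for every pair $a,b\in\QQ^\times$. Using bilinearity of the Hilbert symbol on $\QQ^\times/(\QQ^\times)^2$, this reduces to a check on the generators $-1$, $2$, and the odd primes. The non-trivial cases are $(-1,-1)$ (direct computation at $\nu=\infty,2$), $(-1,p)$ with $p$ odd (first supplementary law), $(2,p)$ with $p$ odd (second supplementary law), and $(p,q)$ with $p\ne q$ distinct odd primes (quadratic reciprocity). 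The main obstacle is therefore the quadratic reciprocity law itself, which I would establish either via Gauss sums or, equivalently, via the splitting behaviour of primes in cyclotomic fields; the remaining local computations at the places $2$ and $\infty$ are elementary.
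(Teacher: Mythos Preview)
Your argument is correct and is precisely the standard proof one finds in Serre's \emph{Cours d'arithm\'etique}. Note, however, that the paper does not actually prove this statement: it is quoted as a known result with a direct citation to \cite[Ch.~IV, \S 3.1]{MR0498338}, and is used as a black box in the proof of Corollary~\ref{signaturetop}. So there is no ``paper's own proof'' to compare against; what you have written is essentially a sketch of the proof in the cited reference, reducing the product formula for $\varepsilon_\nu$ to the Hilbert reciprocity law $\prod_\nu (a,b)_\nu=1$, and then the latter to quadratic reciprocity and its supplements.
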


\begin{cor}\label{signaturetop}
Let $q_1$ and $q_2$  be two non-degenerate $\QQ$-quadratic forms and let $p$ be a prime number. Suppose that, for all primes $\ell$ different from $p$, we have
\[q_1 \otimes \QQ_\ell \cong q_2 \otimes \QQ_\ell.\]
Then the two quadratic forms $q_1$ and $q_2$ have the same discriminant. Moreover, if this discriminant is positive, then $4\vert s_-(q_1)$ if and only if the equality
\[\varepsilon_p(q_1)=\varepsilon_\RR(q_2)\varepsilon_p(q_2).\]
holds.
\end{cor}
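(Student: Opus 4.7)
The plan is to combine Propositions \ref{remarksignature} and \ref{localinvariant} with the product formula of Theorem \ref{productformula}. The heart of the argument is to apply the product formula to both $q_1$ and $q_2$, exploiting the local isomorphisms away from $p$ to cancel the contributions at every $\ell\neq p$.

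I would first apply Proposition \ref{localinvariant} at each prime $\ell\neq p$. From $q_1\otimes\QQ_\ell\cong q_2\otimes\QQ_\ell$ we simultaneously obtain
\[
d(q_1)\equiv d(q_2)\ \text{in}\ \QQ_\ell^\times/(\QQ_\ell^\times)^2,\qquad \varepsilon_\ell(q_1)=\varepsilon_\ell(q_2),
\]
for every $\ell\neq p$. For the global discriminant assertion, set $d\coloneqq d(q_1)/d(q_2)\in\QQ^\times/(\QQ^\times)^2$. The local square relations at all $\ell\neq p$ force $v_\ell(d)$ to be even for each such $\ell$, so modulo squares $d$ lies in $\{1,-1,p,-p\}$. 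Each of the three nontrivial classes fails to be a square in $\QQ_\ell$ for a positive-density set of primes $\ell\neq p$ (by Dirichlet's theorem combined with quadratic reciprocity, or equivalently by Chebotarev applied to $\QQ(\sqrt{d})/\QQ$, where the set of split primes would have density $1/2$ rather than the density $1$ forced by the hypothesis). Thus only $d=1$ is possible, proving $d(q_1)=d(q_2)$ in $\QQ^\times/(\QQ^\times)^2$.

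Next, assume this common discriminant is positive. Proposition \ref{remarksignature} gives $2\mid s_-(q_1)$ and reduces the divisibility $4\mid s_-(q_1)$ to the single condition $\varepsilon_\RR(q_1)=+1$. Applying Theorem \ref{productformula} to each $q_i$ yields
\[
\varepsilon_\RR(q_i)\,\varepsilon_p(q_i)\prod_{\ell\neq p}\varepsilon_\ell(q_i)=+1,\qquad i=1,2.
\]
The equalities $\varepsilon_\ell(q_1)=\varepsilon_\ell(q_2)$ established above make the two finite products identical, so dividing the two identities gives
\[
\varepsilon_\RR(q_1)\,\varepsilon_p(q_1)=\varepsilon_\RR(q_2)\,\varepsilon_p(q_2).
\]
Since $\varepsilon_\RR(q_i)\in\{\pm1\}$, this is equivalent to $\varepsilon_\RR(q_1)=+1\iff\varepsilon_p(q_1)=\varepsilon_\RR(q_2)\,\varepsilon_p(q_2)$, which is precisely the stated criterion.

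No step presents a serious obstacle: the proof is essentially a bookkeeping exercise with the three classical results recalled in this section. The only mildly delicate point is the global discriminant equality, where one must upgrade local squareness at all but one prime to global squareness in $\QQ^\times/(\QQ^\times)^2$ via the Dirichlet/reciprocity argument sketched above; the rest is an immediate manipulation of the product formula.
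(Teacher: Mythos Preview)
Your proof is correct and follows essentially the same route as the paper: extract equality of local discriminants and Hilbert symbols at all $\ell\neq p$ from the hypothesis, upgrade the discriminant equality to a global one, then divide the two product-formula identities to obtain $\varepsilon_\RR(q_1)\varepsilon_p(q_1)=\varepsilon_\RR(q_2)\varepsilon_p(q_2)$ and conclude via Proposition~\ref{remarksignature}. The only cosmetic difference is that the paper dispatches the global discriminant step by citing \cite[Theorem~3 in 5.2]{MR1070716}, whereas you spell out the underlying Chebotarev/Dirichlet argument; the two are equivalent in content.
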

\begin{proof}
The $\ell$-adic hypothesis implies in particular that the discriminants of $q_1$ and $q_2$ coincide in $ \QQ_\ell^{\times}/(\QQ_\ell^{\times})^2$ 
 for all   $\ell\neq p$. This implies that they coincide in $ \QQ^{\times}/(\QQ^{\times})^2$
by \cite[Theorem 3 in 5.2]{MR1070716}.

 Suppose from now on that the discriminant is positive. By Proposition \ref{remarksignature}, $4\vert s_-(q_1)$   if and only $\varepsilon_\RR(q_1)=+1.$

\smallskip

Now, Theorem \ref{productformula} implies that
$\prod_\nu  \varepsilon_\nu(q_1)=1=\prod_\nu  \varepsilon_\nu(q_2).$
Combining this with the $\ell$-adic isomorphisms we deduce
\[\varepsilon_\RR(q_1)\varepsilon_p(q_1)=\varepsilon_\RR(q_2)\varepsilon_p(q_2).\] This means that  $\varepsilon_\RR(q_1)=+1$ if and only if $\varepsilon_p(q_1)=\varepsilon_\RR(q_2)\varepsilon_p(q_2)$. \end{proof}

\begin{prop}\label{padicreduction}
Let us keep notation from Theorem $\ref{mainthm}$. 
Let $p$ be the characteristic of $k$ and $h_i$ be the dimension of the $(i,-i)$-part of the Hodge structure $V_B$. 
Define the positive integer \[s_M \coloneqq \sum_{i \geq 1,\,\text{odd}}  h_i.\]
Then the quadratic forms $q_B$ and $q_Z$ have the same discriminant, which is positive. Moreover, Theorem  $\ref{mainthm}$ holds true if and only if 
the equality
\[\varepsilon_p(q_Z)/\varepsilon_p(q_B)=(-1)^{s_M}.\]
holds true.
\end{prop}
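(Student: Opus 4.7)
The plan is to apply Corollary \ref{signaturetop} to $q_1 = q_Z$ and $q_2 = q_B$. This requires three ingredients: \textbf{(i)} an isometry $q_Z \otimes_\QQ \QQ_\ell \simeq q_B \otimes_\QQ \QQ_\ell$ for every prime $\ell \neq p$; \textbf{(ii)} the positivity of the resulting common discriminant; and \textbf{(iii)} the identification $\varepsilon_\RR(q_B) = (-1)^{s_M}$. Plugged into the last formula of Corollary \ref{signaturetop}, these three facts give exactly the stated equivalence.

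For \textbf{(i)}, I would apply the enriched $\ell$-adic realization functor $R_\ell$ of Convention \ref{sb:Conv_Motives}. Being symmetric monoidal, it sends the morphism of motives $q\colon \Sym^2 M \to \one$ to a non-degenerate quadratic $\QQ_\ell$-form on $R_\ell(M)$. Smooth-proper base change for $\ell \neq p$ gives an isometry $R_\ell(M_{/K},\, q) \simeq R_\ell(M_{/k},\, q_{/k})$. On the special fibre, Lemma \ref{supersing} identifies the right-hand side with $(V_Z, q_Z) \otimes_\QQ \QQ_\ell$; on the generic fibre, base change along $\sigma\colon K \hookrightarrow \CC$ together with Artin's comparison theorem identifies the left-hand side with $(V_B, q_B) \otimes_\QQ \QQ_\ell$. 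Concatenation yields the required isometry.

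For \textbf{(iii)}, I would unravel the polarization hypothesis on the weight-zero Hodge structure $V_B$. The Weil operator $C$ acts on $V_B^{i,-i}$ as $i^{2i} = (-1)^i$, and positivity of $q_B(Cx, \bar y)$ combined with the orthogonality $V_B^{i,-i} \perp V_B^{j,-j}$ for $j\neq -i$ shows that on the real structure of $V_B^{i,-i}\oplus V_B^{-i,i}$ (for $i \geq 1$) the form $q_B$ is positive-definite when $i$ is even and negative-definite when $i$ is odd, while on $V_B^{0,0}\otimes \RR$ it is positive-definite. This forces $s_-(q_B) = 2 s_M$. In particular $s_-(q_B)$ is even, so Proposition \ref{remarksignature} ensures that the discriminant of $q_B$ is positive; combined with \textbf{(i)} and Corollary \ref{signaturetop}, this transfers to $q_Z$ and proves \textbf{(ii)}. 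The same proposition, applied under positivity of the discriminant, reads $\varepsilon_\RR(q_B) = (-1)^{s_-(q_B)/2} = (-1)^{s_M}$.

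Feeding \textbf{(i)}--\textbf{(iii)} into the final clause of Corollary \ref{signaturetop}, one obtains $4 \mid s_-(q_Z)$ if and only if $\varepsilon_p(q_Z) = \varepsilon_\RR(q_B)\,\varepsilon_p(q_B) = (-1)^{s_M}\,\varepsilon_p(q_B)$, which is the required equivalence. The only step demanding some care is \textbf{(i)}, where one must track the quadratic form through the various comparison isomorphisms; but this is automatic as soon as $q$ is treated as a morphism of motives and the enriched realization functors of Convention \ref{sb:Conv_Motives} are taken to be symmetric monoidal.
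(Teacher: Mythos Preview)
Your proof is correct and follows essentially the same route as the paper: apply Corollary \ref{signaturetop} with $q_1=q_Z$, $q_2=q_B$, establish the $\ell$-adic isometries via Lemma \ref{supersing}, smooth-proper base change, and Artin comparison, then use the polarization hypothesis and Hodge symmetry to compute $s_-(q_B)=2s_M$ and hence $\varepsilon_\RR(q_B)=(-1)^{s_M}$. The only cosmetic difference is that the paper invokes the Hodge--Riemann relations by name where you unwind the Weil operator explicitly.
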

\begin{proof}
We want to apply Corollary \ref{signaturetop} to $q_1=q_Z$ and $q_2=q_B$.

First of all, by  Lemma \ref{supersing}, we have that $q_Z \otimes \QQ_\ell = R_\ell(q_{/_{k}})$. By Artin comparison theorem, we have $q_B \otimes \QQ_\ell = R_\ell(q_{/_{\CC}})$. Combining these equalities with smooth proper base change in $\ell$-adic cohomology   we deduce that \[q_B \otimes \QQ_\ell = q_Z \otimes \QQ_\ell.\]
Now, by hypothesis $q_B$ is a polarization for the Hodge structure $V_B$ and recall that $V_B$ has weight zero (Lemma \ref{supersing}). In particular, the Hodge--Riemann relations compute the signature $(s_{B,+},s_{B,-})$ of $q_B$ as
  \[s_{B,+}=\sum_{i\, \text{even}} h_i \hspace{0.5cm}\textrm{and} \hspace{0.5cm} s_{B,-}=\sum_{i\, \text{odd}} h_i.\]
Note that $s_{B,-}$ is even because of the Hodge symmetry.
This implies through Proposition \ref{remarksignature} that  the discriminant is positive and   that the Hilbert symbol of  $q_B\otimes \RR$ is $(-1)^{\sum_{i\geq 1, \text{odd}}h_i}$.  We can now conclude by applying Corollary \ref{signaturetop} to $q_1=q_Z$ and $q_2=q_B$.
\end{proof}

\begin{remark}\label{qualcosa}
We have ${\sum_{i\geq 1, \text{odd}}h_i}\equiv {\sum_{i\geq 0} ih_i}$ (mod $2$), in particular 
the integer $s_M$ has the same parity as  the minimum of the Hodge polygon. 
\end{remark}
The role of the above proposition is to translate a positivity problem (Theorem \ref{mainthm}) into a $p$-adic problem. The advantage is that the latter has a cohomological interpretation, as the following proposition shows.
\begin{prop}\label{motive admissibility}
Let us keep notation from Theorem $\ref{mainthm}$ and Convention $\ref{sb:Conv_Realis}$.
Then the  following holds.
\begin{enumerate}
\item \label{ma1}
The quadratic space $(V_B,q_B)\otimes_\QQ \Qp$ is isomorphic to $(R_p(M),R_p(q))$.
\item \label{ma2}
The Galois representation $R_p(M)$ is crystalline and the admissible filtered-$\varphi$-module which corresponds to it is 
$\Rcris(M)$.
\item \label{ma3}
The $\Qp$-subspace of Frobenius invariant vectors in $\Rcris(M)$ identifies with $V_Z  \otimes_\QQ \Qp$ and generates $\Rcris(M)$. 
\item \label{ma4}
The quadratic form $q_Z  \otimes_\QQ \Qp$ identifies with the restriction of $\Rcris(q)$ to $V_Z  \otimes_\QQ \Qp$.
\end{enumerate}
\end{prop}
\begin{proof}
Point \eqref{ma1} comes from Artin's comparison theorem. Point \eqref{ma2} comes from Falting's comparison theorem in $p$-adic Hodge theory \cite{MR1463696}, see also \cite{MR4199442}*{Theorem 10.2}. For part \eqref{ma3}, notice that in general algebraic classes are included in the Frobenius invariant ones, but for dimensional reasons (hypothesis \eqref{mainthm:1} in Theorem  \ref{mainthm}) this inclusion is an equality. Point \eqref{ma4} follows from
 Lemma~\ref{supersing}.
\end{proof}

%%%%%%%%%%%%%%%%%%%%%%%%%%%%%%%%%%%%%%%%%%%%%%%%%%%%%%%%%%%%%%%%%%%%%%%%%%%%%%%%%%%%%%%%%%%%%%%%%%%%%%%%%%%%%%%%%%%%%%%%%%
\section{Reduction to CM-simple objects} \label{sect:SObj}

In this section we study quadratic forms in the context of $p$-adic Hodge theory.
The setting we choose is inspired by the previous section and in particular by Propositions \ref{padicreduction}  and \ref{motive admissibility}.

\begin{definition}\label{def supersingular}
A supersingular pair   $( V_{B,p}, V_{Z,p})$ is the collection of the following objects.
\begin{enumerate}
\item A  $\Qp$-vector space  $V_{B,p}$ endowed with the action of the absolute Galois group of a $p$-adic field $K$ which makes the representation crystalline.
\item A $\Qp$-vector space $V_{Z,p}$ which generates the (admissible) filtered $\varphi$-module corresponding to $V_{B,p}$ and on which the Frobenius acts trivially.
\end{enumerate}

An orthogonal supersingular datum   $( V_{B,p}, V_{Z,p},q_{B,p}, q_{Z,p} )$, or simply an orthogonal supersingular   pair $( q_{B,p}, q_{Z,p} )$ is the collection of the following objects.
\begin{enumerate}

\item A supersingular pair   $( V_{B,p}, V_{Z,p})$.
\item A non-degenerate $\Qp$-quadratic form
$q_{B,p}$ on $V_{B,p}$ for which the Galois action is isometric. %orthogonal.
\item The non-degenerate $\Qp$-quadratic form
$q_{Z,p}$ on $V_{Z,p}$ corresponding to $q_{B,p}$.
\end{enumerate}
\end{definition}
\begin{lemma}\label{lemma discriminant}
Let $( q_{B,p}, q_{Z,p} )$ be an orthogonal supersingular pair. Then the discriminants of $q_{B,p}$  and $q_{Z,p} $ coincide.
\end{lemma}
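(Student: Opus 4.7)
The plan is to compare $\det V_{B,p}$ and $\det V_{Z,p}$ via the crystalline comparison isomorphism applied to the top exterior power. A naive attempt---simply tensoring with $\Bcris$---would only yield equality of discriminants modulo $(\Bcris^{\times})^2$, which is strictly weaker than what we need: a $\Qp$-non-square unit whose square root generates the unramified quadratic extension of $\Qp$ already lies in $\Bcris$, so equality in $\Bcris^{\times}/(\Bcris^{\times})^2$ does not imply equality in $\Qp^{\times}/(\Qp^{\times})^2$. The hard point will therefore be to show that the scaling factor between the two determinant lines can be taken in $\Qp^{\times}$; Galois and Frobenius equivariance of the comparison isomorphism, applied to a carefully chosen one-dimensional representation, are the tools.

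The first and key step will be to show that $\det V_{B,p}$ is trivial as a $G_K$-representation. From the supersingular hypothesis, one has a natural identification of filtered $\varphi$-modules $\Dcris(V_{B,p}) = V_{Z,p}\otimes_{\Qp}K_0$, where $K_0$ is the maximal unramified subextension of $K$, with Frobenius $1\otimes \varphi_{K_0}$. In particular its total Newton slope is zero, so by weak admissibility the total Hodge number $\sum_i i\,h^i$ also vanishes. The one-dimensional filtered $\varphi$-module $\det\Dcris(V_{B,p})$ then has $\varphi$ fixing the natural basis $e_1\wedge\cdots\wedge e_n$, and its Hodge filtration jumps only at $0$; it is therefore trivial, so that $\det V_{B,p}$ is the trivial $G_K$-representation.

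Once this is in hand, I would fix $\Qp$-bases $\omega_B\in\det V_{B,p}$ and $\omega_Z\in\det V_{Z,p}$, and write the crystalline comparison isomorphism
\[ \alpha\colon V_{B,p}\otimes_{\Qp}\Bcris \xrightarrow{\sim} V_{Z,p}\otimes_{\Qp}\Bcris \]
on determinants as $\det\alpha(\omega_B\otimes 1)=\omega_Z\otimes\lambda$ for some $\lambda\in\Bcris^{\times}$. The Galois equivariance of $\alpha$, combined with the triviality of $\det V_{B,p}$, forces $g(\lambda)=\lambda$ for every $g\in G_K$, so $\lambda\in\Bcris^{G_K}=K_0$; Frobenius equivariance gives $\varphi(\lambda)=\lambda$, and together with $\lambda\in K_0$ this yields $\lambda\in K_0^{\varphi=1}=\Qp$. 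The compatibility of $\det\alpha$ with the induced forms on the determinant lines then reads
\[ \det q_{B,p}(\omega_B,\omega_B) = \lambda^2\cdot \det q_{Z,p}(\omega_Z,\omega_Z) \]
in $\Qp^{\times}$, giving the desired equality of discriminants modulo $(\Qp^{\times})^2$.
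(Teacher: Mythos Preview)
Your proof is correct and follows essentially the same route as the paper: both pass to the determinant line, use that the supersingular hypothesis and admissibility force $\det\Dcris(V_{B,p})$ to be the trivial filtered $\varphi$-module (equivalently, $\det V_{B,p}$ to be the trivial $G_K$-representation), so that the comparison isomorphism matches the two $\Qp$-structures on the determinants and hence the discriminants. Your version is simply a more explicit unpacking of the same argument, tracking the scalar $\lambda$ through Galois and Frobenius invariance rather than invoking directly that the trivial filtered $\varphi$-module corresponds to the trivial representation.
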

 
\begin{proof}
The quadratic form on $V_{Z,p}$ induces a quadratic form on the one dimensional vector space $\det V_{Z,p}$. This form is characterized by an element of $\QQ_p^{\times}/(\QQ_p^{\times})^2$, which is the discriminant of $q_Z$. (The same holds for  $V_{B,p}$.) As the Frobenius acts trivially on  $\det V_{Z,p}$, admissibility implies that the filtration is trivial as well. This means that the comparison isomorphism sends the $\QQ_p$-structure  $\det V_{Z,p}$ on the $\QQ_p$-structure  $\det V_{B,p}$. As the comparison isomorphism respects the underlined quadratic forms the discriminants coincide.  \end{proof}

\begin{lemma}\label{key lemma}
Let   $( V_{B,p}, V_{Z,p},q_{B,p}, q_{Z,p} )$ be an orthogonal supersingular datum, let   $F$ be a number field and let $*$ be a non trivial field involution on $F$. Then the following holds true.
\begin{enumerate} 
\item \label{key lemma_1}
The ring $F \otimes_{\QQ} \QQ_p$ is a product of $p$-adic fields
\[ F \otimes_{\QQ} \QQ_p= \prod_{i=1}^n F_i \] 
\item \label{key lemma_2}
The involution $*$ acts on this product as a composition of disjoint transpositions, i.e. there is an integer $s$ such that, after changing the numbering, one has
 \[*(F_i)=F_{i+s}\hspace{0.2cm} \textrm{for} \hspace{0.2cm}  i\leq s, \hspace{0.2cm} *(F_i)=F_{i-s}  \hspace{0.2cm} \textrm{for} \hspace{0.2cm} s < i\leq 2s\hspace{0.2cm}  \textrm{and} \hspace{0.2cm} *\!(F_i)=F_{i}  \hspace{0.2cm} \textrm{for}\hspace{0.2cm}  i > 2s.\]
 Moreover the induced involution on $F_{i}$ for $ i > 2s$ is non trivial.
 \item \label{key lemma_3}
Suppose that $F$ acts on $V_{B,p}$ and that this action commutes 
   with the Galois action. Then $F$ acts on $V_{Z,p}$ as well and the two actions are compatible with respect to the comparison theorem. In particular the decomposition of point (1) induces two decompositions \[V_{B,p}= \bigoplus_{i=1}^n V^i_{B,p}\ \hspace{0.5cm} \textrm{and} \hspace{0.5cm}V_{Z,p}= \bigoplus_{i=1}^n V^i_{Z,p}\] 
   and for all $i$ the pair $( V^i_{B,p}, V^i_{Z,p})$  is a  supersingular pair.
 \item \label{key lemma_4}
Suppose moreover that for all $\cf\in F$ the adjoint of $\cf$ with respect to $q_{B,p}$ is $\cf^*$. Then the same holds true for $q_{Z,p}$. Moreover the decompositions in $n-s$ factors
  \[V_{B,p}= \bigoplus^\perp_{s < i\leq 2s} (V^i_{B,p}\oplus V^{i-s}_{B,p}) \oplus^\perp \bigoplus^\perp_{i>2s} (V^i_{B,p}) \hspace{0.5cm} \textrm{and} \hspace{0.5cm} V_{Z,p}= \bigoplus^\perp_{s < i\leq 2s} (V^i_{Z,p}\oplus V^{i-s}_{Z,p}) \oplus^\perp \bigoplus^\perp_{i>2s} (V^i_{Z,p})\] 
 are orthogonal. 
 \item \label{key lemma_5}
Keep notation and hypothesis from point (4).  For all $i>s$ denote by $q^i_{B,p}$ the restriction of $q_{B,p}$ to each orthogonal factor of the above decomposition and similarly for  $q^i_{Z,p}$. Then the pair   $(q^i_{B,p},  q^i_{Z,p})$ is an orthogonal supersingular pair.
 \item \label{key lemma_6}
For  $s < i\leq 2s$ consider   the orthogonal supersingular pair $(q^i_{B,p},  q^i_{Z,p})$ defined in point (5). Then  the two $\Qp$-quadratic forms $q^i_{B,p}$ and $  q^i_{Z,p}$ are isomorphic. More  precisely, the two subspaces $V^i_{B,p}$ and $V^{i-s}_{B,p}$ of  $V^i_{B,p}\oplus V^{i-s}_{B,p}$  are maximal isotropic  for the quadratic form $q^i_{B,p}$ (and similarly for $q^i_{Z,p}$). Moreover $q^i_{B,p}$ realizes  the Galois representations $V^i_{B,p}$ and $V^{i-s}_{B,p}$ as one the dual of the other.
  \end{enumerate}
\end{lemma}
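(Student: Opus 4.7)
For (1), the plan is standard algebra: $F \otimes_{\QQ} \Qp$ is an \'etale $\Qp$-algebra, hence a finite product of $p$-adic fields indexed by the places of $F$ above $p$. For (2), the involution $*$ extends $\Qp$-linearly to $F \otimes_{\QQ} \Qp$ and must permute the primitive idempotents; having order two, this permutation splits into disjoint transpositions and fixed points. To prove the non-triviality of the induced map on each fixed factor $F_i$ with $i > 2s$, I would set $F_0 \coloneqq F^{*=1}$ and use that $F_0 \otimes_{\QQ} \Qp$ coincides with the $*$-invariant subring of $F \otimes_{\QQ} \Qp$: if $*$ acted trivially on some $F_i$, then $F_i$ would be contained in $F_0 \otimes_{\QQ} \Qp$, which forces the place of $F_0$ below $F_i$ to split in $F/F_0$; but splitting produces a transposed pair of factors, not a fixed one, a contradiction. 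Hence $*$ restricts to the non-trivial element of the local degree two extension $F_i/F_i^{*=1}$.

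For (3), since the $F$-action on $V_{B,p}$ commutes with $G_K$, functoriality of Fontaine's $\Dcris$ transports it to an $F$-action on the associated admissible filtered $\Fro$-module which commutes with Frobenius. In particular the $\Fro$-invariants $V_{Z,p}$ are $F$-stable, and the decomposition induced by the idempotents $e_i$ of $F \otimes_{\QQ} \Qp$ is compatible with the crystalline comparison by construction. Each summand $(V^i_{B,p}, V^i_{Z,p})$ is a supersingular pair because both $G_K$-stability and the generation property pass to direct factors. For (4), that $*$ realises the adjunction for $q_{Z,p}$ follows again by transporting through the comparison theorem. The orthogonality statement is then a one-line idempotent calculation: since $e_i^* = e_{*(i)}$, for $v \in V^i_{B,p}$ (or $V^i_{Z,p}$) and $w \in V^j_{B,p}$ (or $V^j_{Z,p}$),
\[ q(v,w) = q(e_i v, w) = q(v, e_{*(i)} w), \]
which vanishes unless $j = *(i)$; so only the pairs $\{i, *(i)\}$ can carry a non-zero pairing.

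For (5), each orthogonal summand is $G_K$-stable (the $F$-action commutes with Galois) and hence crystalline, while the non-degeneracy of each $q^i$ is forced by the non-degeneracy of the total form together with the orthogonality just established. For (6), the same idempotent manipulation shows isotropy: for $v, v' \in V^i_{B,p}$ with $*(i) \neq i$,
\[ q(v, v') = q(v, e_{*(i)} v') = 0. \]
Non-degeneracy of $q^i_{B,p}$ then forces $V^i_{B,p}$ and $V^{i-s}_{B,p}$ to be complementary maximal isotropic subspaces of equal dimension, so $q^i_{B,p}$ is split hyperbolic and the pairing realises each summand as the $\Qp$-dual Galois representation of the other. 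The same analysis applies verbatim to $q^i_{Z,p}$; since all split hyperbolic $\Qp$-quadratic forms of a fixed rank are isomorphic, one concludes $q^i_{B,p} \cong q^i_{Z,p}$. The main conceptual step is (2), namely reconciling the local structure of $F/F_0$ with the orbit structure of $*$ on the factors of $F \otimes_{\QQ} \Qp$; once that is in place, the remaining parts follow formally from the idempotent calculus together with the functoriality of the crystalline comparison.
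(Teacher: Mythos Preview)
Your proof is correct and follows essentially the same approach as the paper: the idempotent calculation $b(e_i v, w) = b(v, e_i^* w)$ for parts (4)--(6), and functoriality of the crystalline comparison for part (3). The only noticeable difference is in part (2): the paper observes directly that the involution on $F_i$ extends the non-trivial involution on $F$ via the inclusion $F \hookrightarrow F_i$, hence cannot be trivial, whereas you argue via the splitting behaviour of places of $F_0$ in $F/F_0$; both arguments are valid and yield the same conclusion.
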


\begin{proof}
Point \eqref{key lemma_1} comes from the fact that $F/\QQ$ is a separable extension. 

For point \eqref{key lemma_2} consider $F \otimes_{\QQ} \QQ_p= F_1\times \ldots \times F_n$. Its spectrum is a disjoint union of $n$ closed points and $*$ acts on it as a permutation of order two. Consider now $ i > 2s$ as in the statement and the   inclusion $F \hookrightarrow F_i$ induced from point \eqref{key lemma_1}. By construction the involution on $F_i$ extends the one on $F$, hence it is non trivial.

For point \eqref{key lemma_3}, by the comparison theorem, as $F$ acts on $V_{B,p}$ seen as crystalline Galois representation of $G_K$, then  $F$ must act also on its corresponding filtered $\varphi$-module. This means that the action of $F$ must commute with the Frobenius hence $F$ stabilizes the Frobenius invariant part, i.e. $V_{Z,p}$.

Let us now study point \eqref{key lemma_4}, let $q$ be one of the quadratic forms we want to study and $b$ be its corresponding bilinear form. Let $p_i$ be the $i$-th projector in the decomposition $F \otimes_{\QQ} \QQ_p= F_1\times \ldots \times F_n$. By hypothesis we have $b(p_i(-), p_j (-))= b((-), p_i^* p_j (-))$. But $p_i^* p_j = 0$ except if $p_j=p_i^*$.

Point \eqref{key lemma_5} is automatic from the construction. For point \eqref{key lemma_6} we have to study the quadratic forms on the space $V^i_{B,p}\oplus V^{i-s}_{B,p}$ for $s < i\leq 2s$ (and similarly for $V^i_{Z,p} \oplus V^{i-s}_{Z,p}$). Notice that the proof of point \eqref{key lemma_4} we gave, shows in particular that the two subspaces $V^i_{B,p}$ and $V^{i-s}_{B,p}$ are maximal isotropic. 
Thus the quadratic forms $q_{B,p}$ and $q_{Z,p}$ are both isomorphic to a sum of hyperbolic planes. 
The duality statement comes from the fact that $q_{B,p}$ is non-degenerate by hypothesis.
\end{proof}

\begin{definition}\label{definition good}
Let $M=( q_{B,p}, q_{Z,p} )$
be an orthogonal supersingular pair, let $\varepsilon_p(q_{B,p})$ and  $\varepsilon_p(q_{Z,p})$  be the Hilbert symbols of the quadratic forms and define  
  $s_M$ to be the minimum of the Hodge polygon of the filtered $\varphi$-module underlying $M$.  We say that $M$ is good if the following equality holds
\begin{equation}\label{hilbert formula}\varepsilon_p(q_{Z,p})/\varepsilon_p(q_{B,p})=(-1)^{s_M}.\end{equation}
\end{definition}
\begin{lemma}\label{buona notte}
A supersingular pair $M=( q_{B,p}, q_{Z,p} )$ is good if and only if one of the following situation happens:
\begin{enumerate}
\item The quadratic forms $q_{B,p}$ and $q_{Z,p}$ are isomorphic and $s_M$ is even.
\item The quadratic forms $q_{B,p}$ and $q_{Z,p}$ are not isomorphic and $s_M$ is odd.
\end{enumerate}
\end{lemma}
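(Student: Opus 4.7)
The plan is to observe that the lemma is essentially a direct consequence of the two ingredients already assembled: Lemma \ref{lemma discriminant} and Proposition \ref{localinvariant}. The quadratic forms $q_{B,p}$ and $q_{Z,p}$ have the same rank (by definition of a supersingular pair, together with Point (3) of Definition \ref{def supersingular} which forces the dimensions of $V_{B,p}$ and $V_{Z,p}$ to coincide), and by Lemma \ref{lemma discriminant} they share the same discriminant in $\QQ_p^\times/(\QQ_p^\times)^2$. Proposition \ref{localinvariant} then tells us that the isomorphism class of a non-degenerate $\QQ_p$-quadratic form is determined by rank, discriminant and Hilbert symbol. Consequently,
\[
q_{B,p} \cong q_{Z,p} \quad \Longleftrightarrow \quad \varepsilon_p(q_{B,p}) = \varepsilon_p(q_{Z,p}) \quad \Longleftrightarrow \quad \varepsilon_p(q_{Z,p})/\varepsilon_p(q_{B,p}) = +1.
\]

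The rest is a bookkeeping of signs. Rewriting equation \eqref{hilbert formula} in the form $\varepsilon_p(q_{Z,p})/\varepsilon_p(q_{B,p}) = (-1)^{s_M}$, goodness of $M$ amounts to asking that the ratio of Hilbert symbols be $+1$ when $s_M$ is even, and $-1$ when $s_M$ is odd. Combining this with the equivalence just observed yields exactly the two cases in the statement: in case $s_M$ is even, goodness is equivalent to $q_{B,p} \cong q_{Z,p}$; in case $s_M$ is odd, goodness is equivalent to $q_{B,p} \not\cong q_{Z,p}$.

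There is no real obstacle here: the argument is a one-line application of the classification of non-degenerate quadratic forms over $\QQ_p$ once Lemma \ref{lemma discriminant} has pinned down the discriminant. The only subtlety worth flagging in the write-up is to make sure that both forms are non-degenerate (true by definition of an orthogonal supersingular pair) and of the same rank, so that Proposition \ref{localinvariant} applies verbatim.
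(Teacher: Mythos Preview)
Your proof is correct and follows essentially the same approach as the paper: invoke Lemma \ref{lemma discriminant} to match discriminants, apply Proposition \ref{localinvariant} to reduce isomorphism to equality of Hilbert symbols, and then unpack the definition of good. The paper's proof is terser but identical in substance.
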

\begin{proof}
 The two quadratic spaces have the same rank by definition of supersingular pair %Definition \ref{def supersingular} 
and the same discriminant  by Lemma \ref{lemma discriminant}.  Hence
 they are isomorphic if and only if they have the same Hilbert symbol (Proposition \ref{localinvariant}). This translates the definition of good to the conditions (1) or (2) of the statement.
\end{proof}
\begin{prop}\label{prop simple object}
Keep notation as in Lemma $\ref{key lemma}$.
If for each  $i>2s$   the pair   $(q^i_{B,p},  q^i_{Z,p})$ is good then the pair   $(q_{B,p},  q_{Z,p})$ is good as well.
 \end{prop}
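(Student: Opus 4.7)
The plan is to compute $\varepsilon_p(q_{Z,p})/\varepsilon_p(q_{B,p})$ by exploiting the multiplicativity of the Hilbert symbol under orthogonal sums over $\Qp$. Iterating the identity $\varepsilon_p(q_1\perp q_2)=\varepsilon_p(q_1)\varepsilon_p(q_2)(d_1,d_2)_p$ along the orthogonal decomposition of Lemma \ref{key lemma}(4) gives
\[
\varepsilon_p(q_{B,p})=\prod_i\varepsilon_p(q^i_{B,p})\prod_{i<j}(d^i_{B,p},d^j_{B,p})_p,
\]
and an analogous expression for $\varepsilon_p(q_{Z,p})$. Applying Lemma \ref{lemma discriminant} to each orthogonal supersingular subpair $(q^i_{B,p},q^i_{Z,p})$, the discriminants $d^i_{B,p}$ and $d^i_{Z,p}$ coincide in $\Qp^\times/(\Qp^\times)^2$, so the cross terms agree and cancel in the ratio, leaving
\[
\varepsilon_p(q_{Z,p})/\varepsilon_p(q_{B,p})=\prod_i\varepsilon_p(q^i_{Z,p})/\varepsilon_p(q^i_{B,p}).
\]

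Next I evaluate each factor. For $s<i\leq 2s$, Lemma \ref{key lemma}(6) says that both $q^i_{B,p}$ and $q^i_{Z,p}$ are split hyperbolic of the same rank, hence isomorphic, so the corresponding factor is $1$. For $i>2s$, the hypothesis that $(q^i_{B,p},q^i_{Z,p})$ is good contributes $(-1)^{s_{M^i}}$, and altogether
\[
\varepsilon_p(q_{Z,p})/\varepsilon_p(q_{B,p})=(-1)^{\sum_{i>2s}s_{M^i}}.
\]
To conclude that $(q_{B,p},q_{Z,p})$ is good, I must show this equals $(-1)^{s_M}$. The Hodge numbers are additive over direct sums, hence so is the minimum of the Hodge polygon: $s_M=\sum_{s<i\leq 2s}s_{M^i}+\sum_{i>2s}s_{M^i}$. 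It therefore suffices to verify that $s_{M^i}$ is even for each paired index $s<i\leq 2s$.

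This parity statement is the main obstacle, and the arithmetic heart of the argument. By Lemma \ref{key lemma}(6), the filtered $\Fro$-module attached to $V^{i-s}_{B,p}$ is the dual of the one attached to $V^i_{B,p}$, so the Hodge weights of the combined filtered $\Fro$-module $D$ underlying the pair occur in matched pairs $\{w,-w\}$. Its minimum of Hodge polygon therefore equals $-\sum_{w}|w|$, summed with multiplicity over the Hodge weights $w$ of $V^i_{B,p}$, which has the same parity as $\sum_{w} w$ since $|w|\equiv w\pmod 2$. The sum $\sum_{w} w$ is nothing but the Hodge invariant of the filtered $\Fro$-module $D^i$ underlying $V^i_{B,p}$; by supersingularity the Frobenius $\Fro$ acts trivially on the generating $\Qp$-subspace $V^i_{Z,p}$, so the Newton invariant of $D^i$ vanishes, and admissibility of $D^i$ then forces the Hodge invariant to vanish as well. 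Hence $s_{M^i}$ is even for each $s<i\leq 2s$, the two sums agree modulo $2$, and $(q_{B,p},q_{Z,p})$ is good.
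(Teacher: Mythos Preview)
Your proof is correct and follows essentially the same route as the paper: both use the formula $\varepsilon_p(q_1\perp q_2)=\varepsilon_p(q_1)\varepsilon_p(q_2)(d_1,d_2)_p$ together with Lemma~\ref{lemma discriminant} to cancel the cross terms, and both dispose of the paired indices $s<i\leq 2s$ by combining duality with the vanishing of the Newton (hence Hodge) invariant to force $s_{M^i}$ even. Your parity computation via $|w|\equiv w\pmod 2$ and $t_H(D^i)=t_N(D^i)=0$ is a slightly different packaging of the paper's observation that the dual Hodge polygons, each of height zero, are mirror images and thus share the same minimum.
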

\begin{proof}
First of all, let us show that the pair $(q^i_{B,p},  q^i_{Z,p})$ is good also for $s < i\leq 2s$. In this case the two quadratic forms are isomorphic by Lemma \ref{key lemma}\eqref{key lemma_6}, hence $\varepsilon_p(q^i_{Z,p})/\varepsilon_p(q^i_{B,p})=1.$ We need to show that the height of the corresponding Hodge polygon is even.

Let $  D_{Z,p}^i$ 
 be the admissible filtered $\Fro$-module associated with the factor $V^i_{Z,p}$.  The height of the Newton polygon of each one of those modules is zero 
by construction as they have a Frobenius invariant basis. In particular, since they are admissible, the height of the Hodge polygon is also zero. Moreover, by Lemma \ref{key lemma}\eqref{key lemma_6}, $D_{Z,p}^i $ and  $D_{Z,p}^{i-s}$ are  dual to each other. This duality together with   height zero implies that the two Hodge polygons are symmetric to each other.
In particular, they have the same minimum, hence the minimum of the Hodge polygon of   $D_{Z,p}^i \oplus D_{Z,p}^{i-s}$ is even.

 Now, as the right hand side of the equation \eqref{hilbert formula}  is additive on direct sums, let us show that the left hand side is additive as well, this will imply the statement. Consider two orthogonal supersingular pairs $M_1$ and $M_2$ which are both partial sums of the factors in the decomposition of Lemma \ref{key lemma}\eqref{key lemma_4}.
Let $\delta_{B,1}, \eps_{B,1},\delta_{Z,1}, \eps_{Z,1}$ be the discriminants and the Hilbert symbols of the two quadratic forms associated with  $M_1$ and similarly let $\delta_{B,2}, \eps_{B,2},\delta_{Z,2}, \eps_{Z,2}$ be the invariants for  $M_2$.
By the definition of the Hilbert symbol of a quadratic form \cite[Ch.~IV, \S2.1]{MR0498338}, it follows that the Hilbert symbols of the sum $M_1\oplus M_2$ are respectively
\[ \eps_B(M_1\oplus M_2) = \eps_{B,1}  \cdot   \eps_{B,2}\cdot (\delta_{B,1}, \delta_{B,2})\]
and 
\[ \eps_Z(M_1\oplus M_2) = \eps_{Z,1}  \cdot   \eps_{Z,2}\cdot (\delta_{Z,1}, \delta_{Z,2}),\]
where $(\delta_{1}, \delta_{2})$   denotes the Hilbert symbol of a couple of elements 
 $\delta_{1}$, $\delta_{2} \in\Qp^{\times}/(\Qp^{\times})^2$, {\itshape cf. }\cite[Ch.~III, \S1.1, Th. 2]{MR0498338}.
%, and similarly for $(\delta_{Z,1}, \delta_{Z,2})$.
% 
%because of the additivity of the total class.

If we write the quotient of the two equalities above the contribution of the discriminants simplifies as $\delta_{B,1}= \delta_{Z,1}$ and  $\delta_{B,2}= \delta_{Z,2}$ by Lemma \ref{lemma discriminant}.
\end{proof}

%%%%%%%%%%%%%%%%%%%%%%%%%%%%%%%%%%%%%%%%%%%%%%%%%%%%%%%%%%%%%%%%%%%%%%%%%%%%%%%%%%%%%%%%%%%%%%%%%%%%%%%%%%%%%%%%%%%%%%%%%%%
\section{CM-quadratic forms over $p$-adic fields}\label{sect:CQ-qf} %{Quadratic forms with complex multiplication by $F$}

From now on (except for Section \ref{sect:proof_MT}) let  $F$   be a finite extension of $\Qp$.
We assume that $F$ is endowed with a non trivial involution $*\colon F\rightarrow F$, $\alpha\mapsto \alpha^*$. We collect here some basic results on some quadratic forms endowed with an action of $F$ which we call CM 
(Definition \ref{def CM quad form}). 

Throughout the section we fix a Galois closure $\tilde{F}\supset F$   of $F$, and denote by $\Gamma$ the set of $\Qp$-embeddings of $F$ in $\tilde{F}$. 
%The induced action of $*$ on $\Gamma$ will still be denoted by $*$. 
%We will denote by $F_0$ the subfield of $F$ where $*$ acts as the identity. 
We recall that $F_0$ denote the subfield of $F$ where $*$ acts as the identity, {\itshape cf.} Convention \ref{choucroute}.
We denote still by $*$ the action of $*$ on $\Gamma$ by pre-composition.
%We still denote by 
%$*$ its  action on $\Gamma$ by pre-composition. 

Denote respectively by $\tr_{F_0/\Qp}\colon F_0\rightarrow \Qp$ and $N_{F/F_0}\colon F\rightarrow F_0$ the trace of the extension $F_0/\Qp$ and the norm of $F/F_0$; the extension $F/F_0$ has degree two and we have $N_{F/F_0}(x)=x^*x$ for every $x\in F$. 

\begin{definition}\label{def CM quad form}
 
 \begin{enumerate}
 \item A CM-space (with respect to $F$) is the data of a $\Qp$-vector space $V$ and an action 
 of $F$ on $V$ such that $V$ as an $F$-vector space   has dimension one.

\item
A CM-quadratic space (with respect to $F$) is the data of a CM-space $V$ and a non-degenerate
quadratic form $q\colon V\rightarrow \Qp$ on the $\Qp$-vector space $V$, such that
for all $\alpha$ in $F$, the adjoint with respect to $q$ of the multiplication by $\alpha$ is the multiplication by $ \alpha^*$.   
\end{enumerate}
For simplicity we may say that $q$ (or $V$) is a $CM$-quadratic form without mentioning explicitly the other structures.
\end{definition}

\begin{prop} Let $V$ be a CM-quadratic space.
Fix a nonzero vector $v$ in $V$ and consider the induced identification $V \cong F$ as $F$-vector spaces. Under this identification,
there exists a unique $a\in F_0^{\times}$ such that for every $x\in F$, we have 
\[q(x)=\tr_{F_0/\Qp}(a N_{F/F_0}(x)).\]

If one changes the choice of the vector $v$, the element $a\in F_0^{\times}$ is multiplied by some norm in $N_{F/F_0}(F^{\times})$. In particular the class of $a$ in $F^{\times}_0/N_{F/F_0}(F^{\times})$ is well-defined and depends only on $V$. 
\end{prop}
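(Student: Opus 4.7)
The plan is to encode the whole quadratic datum into a single $\Qp$-linear form on $F$, which the trace pairing of $F_0/\Qp$ will then represent. First I would pass to the associated symmetric $\Qp$-bilinear form $b(x,y)\coloneqq \tfrac12\bigl(q(x+y)-q(x)-q(y)\bigr)$, on which the adjoint hypothesis reads $b(\alpha x,y)=b(x,\alpha^* y)$ for every $\alpha\in F$ and $x,y\in V$. Using the identification $V\iso F$ furnished by $\alpha\mapsto \alpha v$, I would define the $\Qp$-linear form $\phi\colon F\to \Qp$ by $\phi(\alpha)\coloneqq b(1,\alpha)$; iterating the adjoint property once yields the compact identity $b(\alpha,\beta)=\phi(\alpha^*\beta)$, so $b$ (and hence $q$) is entirely encoded by $\phi$. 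Symmetry of $b$ then translates into $\phi\circ *=\phi$; writing $F=F_0\oplus F_0\delta$ for some $\delta\in F$ with $\delta^*=-\delta$ (take $\delta=\eta-\eta^*$ for any $\eta\in F\setminus F_0$), this $*$-invariance forces $\phi$ to vanish on $F_0\delta$ and hence to be determined by its restriction $\phi_0\coloneqq \phi|_{F_0}$.

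Next I would evaluate on the diagonal: $q(x)=\phi(x^*x)=\phi_0(N_{F/F_0}(x))$, the last equality using $x^*x\in F_0$. Since $F_0/\Qp$ is separable, the trace pairing $(y_1,y_2)\mapsto \tr_{F_0/\Qp}(y_1y_2)$ is a perfect $\Qp$-bilinear form on $F_0$, so there is a unique $a\in F_0$ with $\phi_0(y)=\tr_{F_0/\Qp}(ay)$, giving the claimed formula $q(x)=\tr_{F_0/\Qp}(aN_{F/F_0}(x))$. Non-degeneracy of $q$ forces $a\neq 0$: indeed if $a=0$ then $\phi_0\equiv 0$, whence $\phi\equiv 0$ by $*$-invariance, whence $b\equiv 0$. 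Hence $a\in F_0^{\times}$ and is uniquely determined by $v$.

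For the final assertion, replacing $v$ with $v'=\lambda v$ for $\lambda\in F^{\times}$ transports $q$ under the new identification $V\iso F$ to the form $x\mapsto q(\lambda x)$; multiplicativity of the norm gives $N_{F/F_0}(\lambda x)=N_{F/F_0}(\lambda)\cdot N_{F/F_0}(x)$, so the new parameter is $a'=aN_{F/F_0}(\lambda)$, which lies in the same class of $F_0^{\times}/N_{F/F_0}(F^{\times})$. There is no real obstacle to overcome here: the rigid encoding $b(\alpha,\beta)=\phi(\alpha^*\beta)$ comes essentially for free from the one-dimensionality of $V$ as an $F$-module together with the adjoint axiom, after which everything reduces to standard linear algebra and the perfectness of the trace pairing.
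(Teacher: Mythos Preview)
Your proof is correct and follows essentially the same route as the paper: pass to the bilinear form $b$, use the adjoint hypothesis to write $q(x)=b(1,x^*x)=b(1,N_{F/F_0}(x))$, and then represent the linear form $b(1,-)|_{F_0}$ via the non-degenerate trace pairing to extract $a$. Your treatment is somewhat more detailed---you make explicit the $*$-invariance of $\phi$ and the decomposition $F=F_0\oplus F_0\delta$ in order to argue that $a=0$ forces $b\equiv 0$---whereas the paper simply notes that $a=0$ would make $q$ identically zero; both conclusions suffice.
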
 
\begin{dem}
Let $b\colon F\times F\rightarrow \Qp$ the symmetric form associated with $q$.
Since $q$ is a CM-quadratic form, we have, for every $x\in F$, $q(x)=b(x,x)=b(1,x^*x)=b(1,N_{F/F_0}(x))$.
The restriction of $b(1,-)$ to $F_0$ gives a $\Qp$-linear form on $F_0$. Since $(u,v)\mapsto \tr_{F_0/\Qp}(uv)$ is a non-degenerate bilinear form on $F_0$ we can conclude that there exists such an element $a\in F_0$. It must be nonzero otherwise $q$ would be zero.

Let $w$ be a new nonzero vector in $V$ and $q'(x)$ be the induced quadratic form on $F$ under this new identification. There is a unique nonzero element $\cf\in F$ such that $\cf v=w$. This gives the relation $ q'(x)=q(\cf x)$. Hence one deduces 
\[q'(x)=\tr_{F_0/\Qp}(a N_{F/F_0}(\cf x)) = %\tr_{F_0/\Qp}((aN_{F/F_0}(\cf)) \cdot N_{F/F_0}(x)). 
\tr_{F_0/\Qp}((a\cf^*\cf ) \cdot N_{F/F_0}(x)).\qedhere\]
\end{dem}
\begin{definition}
Let $q$ be a CM-quadratic form. We define the gauge of $q$ as the element of  $  F^{\times}/N_{F/F_0}(F^{\times})$   associated with $q$ through the above proposition. 
\end{definition}

\begin{theorem}[Milnor]\label{thm milnor}~
\begin{enumerate}
	\item\label{thm milnor:1} The group $  F^{\times}/N_{F/F_0}(F^{\times})$  has cardinality two.
		\item\label{thm milnor:x} All CM-quadratic forms (with respect to the same $F$) have the same discriminant.
	\item\label{thm milnor:2} Two CM-quadratic forms are isomorphic if and only if they have the same gauge in $  F^{\times}/N_{F/F_0}(F^{\times})$.
	\item\label{thm milnor:3} Up to isomorphism there are exactly two CM-quadratic spaces. 
\end{enumerate}
\end{theorem}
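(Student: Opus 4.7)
The plan is to establish the four assertions in sequence, the last being a formal consequence of the others. Part \eqref{thm milnor:1} is pure local class field theory: the norm residue isomorphism for the quadratic extension $F/F_0$ of $p$-adic fields identifies $F_0^{\times}/N_{F/F_0}(F^{\times})$ with $\Gal{F}{F_0}$, which has order two.

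For part \eqref{thm milnor:x}, I would pick $\delta\in F$ with $\delta^{*}=-\delta$ and write $x=u+v\delta$ with $u,v\in F_0$, so that $N_{F/F_0}(x)=u^2-\delta^2 v^2$. Under the $\QQ_p$-linear decomposition $F=F_0\oplus F_0\delta$, the quadratic form $q$ splits as the orthogonal sum of the two trace forms $u\mapsto \tr_{F_0/\QQ_p}(a u^2)$ and $v\mapsto \tr_{F_0/\QQ_p}(-a\delta^2 v^2)$ on the $\QQ_p$-vector space $F_0$. Using the identity $\det\bigl(\tr_{F_0/\QQ_p}(\alpha e_ie_j)\bigr)_{i,j}=N_{F_0/\QQ_p}(\alpha)\cdot d_0$ (valid for any $\QQ_p$-basis $(e_i)$ of $F_0$, where $d_0$ denotes the discriminant of $F_0/\QQ_p$), one computes
\[\det(q)\;\equiv\;N_{F_0/\QQ_p}(-a^2\delta^2)\cdot d_0^{\,2}\;\equiv\;N_{F_0/\QQ_p}\bigl(N_{F/F_0}(\delta)\bigr)\;=\;N_{F/\QQ_p}(\delta)\pmod{(\QQ_p^{\times})^2},\]
visibly independent of $a$.

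For part \eqref{thm milnor:2}, the easy direction is a one-line computation: if $a_2=a_1\,\cf^{*}\cf$ for some $\cf\in F^{\times}$, then the $\QQ_p$-linear map $x\mapsto \cf x$ on $F$ satisfies $q_{a_1}(\cf x)=\tr_{F_0/\QQ_p}(a_1\cf^{*}\cf\,x^{*}x)=q_{a_2}(x)$, giving an explicit isometry $(F,q_{a_2})\to(F,q_{a_1})$. The converse is the crux of the theorem. Since by \eqref{thm milnor:x} the two forms share the same discriminant, Proposition \ref{localinvariant} reduces everything to comparing the Hilbert symbols $\eps_p(q_{a_1})$ and $\eps_p(q_{a_2})$. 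When $F_0=\QQ_p$ this is direct from the diagonal decomposition: $\eps_p(\langle a,-a\delta^2\rangle)=(a,-a)_p(a,\delta^2)_p=(a,\delta^2)_p$, which equals $+1$ precisely when $a$ is a norm from $F=\QQ_p(\delta)$. For general $F_0$, the same dichotomy has to be propagated through the transfer of quadratic forms from $F_0$ to $\QQ_p$; this is precisely the content of Milnor's theorem in \cite{MR249519} that the paper invokes. Part \eqref{thm milnor:3} then follows immediately: by \eqref{thm milnor:1} only two gauges occur, both are realized by the construction $a\mapsto q_a$, and by \eqref{thm milnor:2} they label distinct isometry classes.

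The main obstacle is the converse half of \eqref{thm milnor:2} when $F_0\neq\QQ_p$. Although the transfer of quadratic forms has a concrete description, controlling its effect on the Hasse invariant of the specific two-dimensional $F_0$-form $\langle a,-a\delta^2\rangle_{F_0}$ is delicate, because this transfer is not injective on Witt classes in general. Milnor's elegant resolution in \cite{MR249519} proceeds via isometry classes of hermitian $F/F_0$-forms together with a descent argument, bypassing any explicit Hilbert symbol computation over $F_0$.
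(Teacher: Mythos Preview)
Your argument is correct, and for part \eqref{thm milnor:1}, the easy direction of \eqref{thm milnor:2}, and part \eqref{thm milnor:3} it coincides with the paper's. The differences lie in \eqref{thm milnor:x} and the converse of \eqref{thm milnor:2}.

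For \eqref{thm milnor:x}, you compute the $\Qp$-discriminant of $q_a$ directly via the orthogonal splitting $F=F_0\oplus F_0\delta$ and the determinant formula for scaled trace forms. The paper instead works at the intermediate level: it observes that the $F_0$-forms $\tilde q_a(x)=a\,N_{F/F_0}(x)$ all have the same $F_0$-discriminant (scaling a rank-two form by $a$ multiplies the discriminant by $a^2$), then invokes \cite[Lemma~2.2]{MR249519} to conclude that the transferred $\Qp$-discriminants agree. Your route is more self-contained; the paper's is shorter but outsources the transfer step.

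For the converse of \eqref{thm milnor:2}, the paper's argument is cleaner than your Hilbert-symbol computation. It first settles the question over $F_0$ by a one-line value-set argument: if $\tilde q_a\cong\tilde q_b$ as $F_0$-forms, then their images on $F^{\times}$ coincide, namely $a\,N_{F/F_0}(F^{\times})=b\,N_{F/F_0}(F^{\times})$, forcing $a/b$ to be a norm. No invariants are needed at this step. The passage from $\Qp$-isometry of $q_a,q_b$ to $F_0$-isometry of $\tilde q_a,\tilde q_b$ is then supplied by \cite[Theorem~2.3]{MR249519}, which is precisely the transfer injectivity you allude to in your final paragraph. Your Hilbert-symbol approach is complete when $F_0=\Qp$, but, as you correctly acknowledge, for general $F_0$ it does not avoid Milnor's result; the paper's value-set argument sidesteps the symbol computation entirely.
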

\begin{dem} 
The extension $F/F_0$ has degree $2$. The reciprocity isomorphism $$F^{\times}/N_{F/F_0}(F^{\times})\iso \Gal{F}{F_0}$$
 of local class field theory  proves \eqref{thm milnor:1}. 
 
Consider  $F$ as an $F_0$-vector space of dimension 2 and consider on it the non-degenerate $F_0$-quadratic forms  $\tilde{q}_a(x)=a N_{F/F_0}(x)$, for $a\in F_0^{\times}$. 
By definition a CM-quadratic form $q$ of gauge $a$ is isometric to $\tr_{F_0/\Qp} \circ\, \tilde{q}_a$. 
Now, notice that all quadratic forms $\tilde{q}_a$ have the same discriminant in $F_0^\times/ (F_0^\times)^2$, 
independent of $a$, as $\tilde{q}_a(x)=a\tilde{q}_1(x)$ and the quadratic space has dimension $2$. 
(Actually this discriminant is the class of the opposite of the discriminant of the extention $F/F_0$.) By  \cite[Lemma 2.2]{MR249519} this implies   \eqref{thm milnor:x}. 

For \eqref{thm milnor:2} we argue as above and consider the quadratic forms  $\tilde{q}_a$. We claim that $\tilde{q}_a\cong \tilde{q}_b$ if and only if $a$ and $b$ are in the same class of $  F^{\times}/N_{F/F_0}(F^{\times})$. This will give \eqref{thm milnor:2}  thank to \cite[Theorem 2.3]{MR249519}. The claim is elementary: if $a$ and $b$ are in the same class, we can write $a/b=N_{F/F_0}(\cf)$ and the change of variable $x\mapsto \cf \cdot  x$ gives the isometry. Conversely, if the quadratic forms  are isomorphic then the images of $F^{\times}$ by them must be the same, but those images are precisely $a \cdot N_{F/F_0}(F^{\times})$ and  $b \cdot N_{F/F_0}(F^{\times})$. 
Point \eqref{thm milnor:3} is the combination of \eqref{thm milnor:1} and \eqref{thm milnor:2}.
\end{dem}

\begin{prop}\label{prop decomp}
Let $V$ be a CM-space and  consider 
 $V\otimes_{\Qp} F$. It is endowed with two different   actions of $F$, one on the left, induced by the action of $F$ on $V$, and one on the right. To distinguish them we will write, for all $w \in V\otimes_{\Qp} F$ and $\cf\in F$, $\cf(w)$ for the left multiplication and $w\cdot \cf$ for the right multiplication. 
 The inclusion $F \subset  \tilde{F}$ gives a fixed inclusion  $V\otimes_{\Qp} F \subset V\otimes_{\Qp} \tilde{F}$ and, for all $\sigma \in \Gamma$, let us denote again by $\sigma$ the induced embedding $V\otimes_{\Qp} F \subset V\otimes_{\Qp} \tilde{F}.$   Then the following hold.
 \begin{enumerate}
\item \label{prop decomp_1}
There exists a nonzero (eigen)vector $v\in   V\otimes_{\Qp} F$ such that $\cf(v)=v\cdot\cf$. Such a $v$ is unique up to (left or right) multiplication by an element of $F^{\times}$. 
\item \label{prop decomp_2}
The (eigen)vectors $\{\sigma(v)\}_{\sigma \in \Gamma}$ form a basis of $V\otimes_{\Qp} \tilde{F}$ over $\tilde{F}$.
We set $L_{\sigma}\coloneqq\sigma(v)\cdot \tilde{F}$, it is an eigenline with respect to the embedding $\sigma$, {\itshape i.e.} $z(\sigma(v))=\sigma(v)\cdot\sigma(z)$.
\end{enumerate}
\end{prop}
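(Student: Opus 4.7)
The strategy is to fix a basis vector of $V$ over $F$ so as to reduce both assertions to a concrete statement about the étale $\QQ_p$-algebra $F\otimes_{\QQ_p} F$.

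For part \eqref{prop decomp_1}, I would pick a nonzero $e\in V$ and use the induced left-$F$-linear isomorphism $V\iso F$, so that $V\otimes_{\QQ_p}F\iso F\otimes_{\QQ_p}F$ as $(F,F)$-bimodules. Since $F/\QQ_p$ is separable, $F\otimes_{\QQ_p}F$ is an étale $\QQ_p$-algebra, hence a finite product of field extensions. The multiplication map $\mu\colon F\otimes_{\QQ_p}F\to F$, $a\otimes b\mapsto ab$, is a surjective morphism of $\QQ_p$-algebras whose kernel is the ideal $I$ generated by the elements $\cf\otimes 1-1\otimes\cf$ for $\cf\in F$; it therefore corresponds to an idempotent $e_\mu\in F\otimes_{\QQ_p}F$ cutting out an $F$-factor in the product decomposition. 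I set $v\coloneqq e_\mu$. By construction $(\cf\otimes 1)\cdot v=(1\otimes \cf)\cdot v$ for every $\cf\in F$, so $\cf(v)=v\cdot\cf$. For uniqueness, any $w\in V\otimes_{\QQ_p}F$ satisfying the same eigenvector condition is annihilated by $I$, hence lies in $e_\mu(F\otimes_{\QQ_p}F)$, which is a free $F$-module of rank one generated by $v$; thus $w=\cf v$ for a unique $\cf\in F$, and $\cf\in F^\times$ precisely when $w\neq 0$.

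For part \eqref{prop decomp_2}, I extend scalars from $F$ to $\tilde{F}$: since $\tilde{F}/\QQ_p$ is Galois and contains $F$, there is a canonical isomorphism of $\tilde{F}$-algebras
\[
F\otimes_{\QQ_p}\tilde{F}\;\iso\;\prod_{\sigma\in\Gamma}\tilde{F},\qquad a\otimes b\;\longmapsto\;\bigl(\sigma(a)b\bigr)_{\sigma\in\Gamma},
\]
in which the left multiplication by $\cf\in F$ acts on the $\sigma$-component by the scalar $\sigma(\cf)$. Under the identification above, $V\otimes_{\QQ_p}\tilde{F}$ decomposes as the direct sum $\bigoplus_{\sigma\in\Gamma} L_\sigma$ of the one-dimensional simultaneous eigenspaces for the commuting family of left-multiplication operators, with $L_\sigma=\{w:\cf\cdot w=w\cdot\sigma(\cf)\text{ for all }\cf\in F\}$. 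A direct computation shows that $\sigma(v)\in L_\sigma$: for every $\cf\in F$,
\[
\cf\cdot\sigma(v)=\sigma\bigl(\cf(v)\bigr)=\sigma(v\cdot\cf)=\sigma(v)\cdot\sigma(\cf),
\]
using that $\sigma$ acts only on the right tensor factor. Moreover $\sigma(v)$ is nonzero, because the map $\mathrm{id}_V\otimes\sigma$ is an injection of $\QQ_p$-vector spaces and $v\neq 0$. Since $|\Gamma|=[F:\QQ_p]=\dim_{\tilde F}(V\otimes_{\QQ_p}\tilde F)$ and the $L_\sigma$'s are in direct sum, the vectors $\{\sigma(v)\}_{\sigma\in\Gamma}$ form a $\tilde F$-basis, with $L_\sigma=\sigma(v)\cdot\tilde F$ as claimed.

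The only slightly delicate point is keeping the left and right actions of $F$ disentangled throughout, in particular when checking that $\sigma(v)$ is an eigenvector for the character $\sigma$; once the correct identifications are set up, everything follows from the general fact that $F\otimes_{\QQ_p}\tilde F$ splits as a product indexed by $\Gamma$.
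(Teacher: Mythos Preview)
Your proof is correct and follows essentially the same approach as the paper's: both rest on the classical decomposition $F\otimes_{\Qp}\tilde F\iso\tilde F^{\Gamma}$ together with $\dim_F V=1$. The paper simply states this in one line, while you unpack the argument explicitly (constructing $v$ via the diagonal idempotent of $F\otimes_{\Qp}F$ and verifying directly that $\sigma(v)$ spans the $\sigma$-eigenline), but the underlying idea is identical.
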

\begin{dem}
Follows immediately from the classical description $F\otimes_{\Qp}\tilde{F}=\tilde{F}^{\Gamma}$ and the fact that the dimension of $V$ over $F$ is one. %$\dim_F V =1$.
\end{dem}

\begin{prop}\label{prop decomp2}
Let $(V,q)$ be a CM-quadratic space and  let 
 $v \in V\otimes_{\Qp} F$ be the eigenvector  constructed in Proposition $\ref{prop decomp}$ above. Let  $b$ be the  bilinear pairing induced by $q$. Then the following holds.
 \begin{enumerate}
 \item \label{prop decomp2_1} One has $b(\sigma(v),\tau(v))=0$ except if $\sigma=\tau^*$.
 \item \label{prop decomp2_2} The number  $2b( v,v^*)\in F_0^{\times}$ is the gauge of $q$.
 \end{enumerate}
\end{prop}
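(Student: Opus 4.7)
The plan is to exploit the CM adjoint property. The eigenvector relation $\cf\cdot\sigma(v)=\sigma(v)\cdot\sigma(\cf)$ from Proposition \ref{prop decomp}, together with the identity $b(\cf\cdot\sigma(v),\tau(v))=b(\sigma(v),\cf^*\cdot\tau(v))$ coming from the CM condition, yields
\[\bigl(\sigma(\cf)-\tau(\cf^*)\bigr)\,b(\sigma(v),\tau(v))=0\quad\text{for every }\cf\in F.\]
Since $\tau(\cf^*)=\tau^*(\cf)$ and the embeddings $\sigma,\tau^*\colon F\hookrightarrow\tilde F$ differ as soon as $\sigma\neq\tau^*$, choosing $\cf$ with $\sigma(\cf)\neq\tau^*(\cf)$ forces $b(\sigma(v),\tau(v))=0$.

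\textbf{Part (2).} I would first extend $b$ to a symmetric $F$-bilinear pairing on $V\otimes_{\Qp}F$ (bilinear in the right factor) and observe that $c_1\coloneqq b(v,v^*)\in F_0$: the symmetry of $b$ together with the compatibility of the involution $\mathrm{id}_V\otimes *$ with this extension gives $*(c_1)=b(v^*,v)=c_1$. Moreover, from $\sigma^*=\sigma\circ *$ one gets $\sigma^*(v)=\sigma(v^*)$, and the naturality $b(\sigma(x),\sigma(y))=\sigma(b(x,y))$ of the extended pairing yields $b(\sigma(v),\sigma^*(v))=\sigma(c_1)$, which is $\sigma\leftrightarrow\sigma^*$ symmetric.

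The key step is the explicit computation of the gauge. Fix a nonzero $v_0\in V$ and expand $v_0\otimes 1=\sum_{\sigma\in\Gamma}\gamma_\sigma\,\sigma(v)$ in the $\tilde F$-basis of $V\otimes_{\Qp}\tilde F$ provided by Proposition \ref{prop decomp}. Because $v_0\otimes 1$ is fixed by the natural action of $G\coloneqq\Gal{\tilde F}{\Qp}$ on the right factor, a Galois descent argument shows that the coefficient tuple $(\gamma_\sigma)_\sigma\in\tilde F^{\Gamma}$ is the image of a unique element $g_F\in F$ under the canonical isomorphism $F\otimes_{\Qp}\tilde F\simeq\tilde F^{\Gamma}$, so that $\gamma_\sigma=\sigma(g_F)$ for every $\sigma$. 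The left-$F$-linearity of the expansion then gives $\cf v_0\otimes 1=\sum_\sigma \sigma(\cf)\gamma_\sigma\,\sigma(v)$; using (1) to kill off-diagonal terms in $q(\cf v_0)=b(\cf v_0\otimes 1,\cf v_0\otimes 1)$ leaves
\[q(\cf v_0)=\sum_{\sigma\in\Gamma}\sigma\bigl(c_1\,N_{F/F_0}(g_F)\,N_{F/F_0}(\cf)\bigr)=2\,\tr_{F_0/\Qp}\bigl(c_1\,N_{F/F_0}(g_F)\cdot N_{F/F_0}(\cf)\bigr),\]
where the last equality uses that the argument of $\sigma$ lies in $F_0$ and each $\sigma_0\colon F_0\hookrightarrow\tilde F$ receives equal contributions from its two extensions $\sigma,\sigma^*$. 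Hence the gauge attached to the identification $V\simeq F$ via $v_0$ is $a=2c_1\,N_{F/F_0}(g_F)$, and modulo $N_{F/F_0}(F^{\times})$ this is $[2b(v,v^*)]$, as claimed.

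The main obstacle I expect is the Galois descent step producing $g_F\in F$ and the careful tracking of the two $F$-structures in $V\otimes_{\Qp}\tilde F$ (the left one via the action on $V$, the right one via the second tensor factor); once this is in place, everything else is a direct expansion combined with (1) and the partition of $\Gamma$ into $*$-orbits $\{\sigma,\sigma^*\}$.
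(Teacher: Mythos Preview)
Your argument is correct and follows the same route as the paper. Part (1) is identical. For part (2) the paper simply asserts that a vector of $V$, written in the eigenbasis $\{\sigma(v)\}_{\sigma\in\Gamma}$, has coordinates $(\sigma(\cf))_{\sigma}$ for some $\cf\in F$, and then computes $q$ exactly as you do; your Galois descent argument is precisely the justification of that assertion. The only cosmetic difference is that the paper effectively takes $v_0=\sum_{\sigma}\sigma(v)$, which makes your $g_F$ equal to $1$ and removes the extra norm factor $N_{F/F_0}(g_F)$ from the final formula.
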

\begin{dem}
For all $\cf\in F$ we have  $b(\sigma(\cf(v)),\tau(v))=  b(\sigma(v),\tau(\cf^*(v)))$ by definition of CM-quadratic forms. On the other hand we have the relations
$b(\sigma(\cf(v)),\tau(v))
= b(\sigma(v),\tau(v))\cdot \sigma(\cf)$ and 
$b(\sigma(v),\tau(\cf^*(v))) 
=  b(\sigma(v),\tau(v)) \cdot \tau^*(\cf)$,  by construction of $v$. These relations together give (1).

For the second point, choose $\{\sigma(v)\}_{\sigma \in \Gamma}$ as basis. A vector $w\in V\otimes_{\Qp} \tilde{F}$ is actually in $V$ if and only if there exists a scalar $\cf\in F$ such that the coordinates of $w$ with respect to this basis are $\{\sigma(\cf)\}_{\sigma \in \Gamma}$.
Let us  now  compute the quadratic form on such a vector $w\in V$:
\[q(w)=\sum_{\sigma, \tau \in \Gamma} b(\sigma(\cf v), \tau(\cf v))=\sum_{\sigma \in \Gamma} b(\sigma(\cf v), \sigma^*(\cf v))= \sum_{\sigma \in \Gamma_0} 2 \sigma(\cf \cf^*)  \sigma(b( v,v^*)), \]
where  the second equality comes from part \eqref{prop decomp2_1} and $\Gamma_0$ denotes the set of $\Qp$-embeddings of $F_0$ in $\tilde{F}$. 
In conclusion we have
 \[q(w)= \tr_{F_0/\Qp}(2b( v,v^*) \cdot N_{F/F_0}(\cf))\]
which computes the gauge.
\end{dem}
\begin{prop}\label{comp decomp}
Let $V_1 $ and $V_2$ be two CM-spaces and let $B$ be an $F$-algebra.
Suppose that it is given an identification of $B$-modules
\[V_1\otimes_{\Qp} B =  V_2\otimes_{\Qp} B \]
which is compatible with the $F$-actions.
Let $v_1$ and $v_2$ be the vectors constructed in Proposition $\ref{prop decomp}$ associated respectively with $V_1$ and $V_2$.
Then,   there exists a unique invertible element $\lambda  \in B^{\times}$ such that
\[v_1=\lambda  v_2.\]

Let now $\tilde{B}$ be an $ \tilde{F}$-algebra and suppose that   each $\sigma \in \Gamma$ extends to a  homomorphism $\tilde{\sigma} : B \rightarrow \tilde{B}$.
Then, for all $\sigma$, we have
\[\sigma(v_1)=\tilde{\sigma}(\lambda) \sigma(v_2).\]
 \end{prop}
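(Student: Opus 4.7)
My plan is to identify $v_1$ and $v_2$ as generators of a common rank-one free $B$-submodule of $V_1\otimes_{\Qp} B = V_2\otimes_{\Qp} B$, then read off $\lambda$ directly.

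First, I would observe that each $V_i\otimes_{\Qp} B$ carries two commuting $F$-actions: a ``left'' one induced by the $F$-module structure on $V_i$, and a ``right'' one via the $F$-algebra structure of $B$. Let $W_i\subset V_i\otimes_{\Qp} B$ be the set of elements on which these two actions coincide. It is a $B$-submodule, and the defining property of $v_i$ from Proposition~\ref{prop decomp} says exactly that $v_i\in W_i$. To show $W_i$ is free of rank one over $B$ with generator $v_i$, I would choose an $F$-basis $e$ of $V_i$, so that $V_i\otimes_{\Qp} B\iso F\otimes_{\Qp} B\iso (F\otimes_{\Qp} F)\otimes_{F} B$, the last tensor product being taken via the $F$-algebra structure of $B$ on the second factor. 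Since $F/\Qp$ is separable, the multiplication map $F\otimes_{\Qp} F\to F$ has a section and gives a decomposition of $F\otimes_{\Qp} F$ as a product $F\times F'$ of $F$-algebras. Tensoring with $B$ on the right over $F$ yields
\[
V_i\otimes_{\Qp} B\;\iso\; B\;\times\;(F'\otimes_{F} B),
\]
and a direct inspection shows that $W_i$ corresponds to the first factor $B$. The element $v_i$, which lives in $V_i\otimes_{\Qp} F$ and satisfies the eigenvector relation, corresponds to a nonzero element of the diagonal factor $F$ of $F\otimes_{\Qp} F$ (the $F'$-component is forced to be zero), hence to a unit of $B$; so $v_i$ indeed generates $W_i$.

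The hypothesis that the identification $V_1\otimes_{\Qp} B = V_2\otimes_{\Qp} B$ is compatible with the $F$-actions means it respects both the left action (by $F$-linearity) and the right action (it is automatically $B$-linear since both sides use the same $B$). It therefore restricts to an identification $W_1 = W_2$. Since $v_1$ and $v_2$ are both generators of this common rank-one free $B$-module, there is a unique $\lambda\in B^{\times}$ such that $v_1 = \lambda\, v_2$. For the second assertion, I would apply the scalar extension $\mathrm{id}_{V_1}\otimes \tilde{\sigma}\colon V_1\otimes_{\Qp} B\to V_1\otimes_{\Qp} \tilde{B}$ to the identity $v_1 = \lambda v_2$. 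Because $v_i\in V_i\otimes_{\Qp} F$ and $\tilde{\sigma}$ extends $\sigma\colon F\to \tilde{F}$, this map sends $v_i$ to the image of $\sigma(v_i)\in V_i\otimes_{\Qp} \tilde{F}$ inside $V_i\otimes_{\Qp} \tilde{B}$, and yields $\sigma(v_1) = \tilde{\sigma}(\lambda)\,\sigma(v_2)$.

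The only mildly delicate point is the bookkeeping of the two $F$-actions and the verification that $v_i$ projects to a unit in the $B$-component of $B\times(F'\otimes_F B)$; once the decomposition $F\otimes_{\Qp}F = F\times F'$ is in place, the rest of the argument is essentially formal.
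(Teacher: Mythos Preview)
Your argument is correct and follows the same idea as the paper's proof: both identify the common ``eigenline'' in $V_1\otimes_{\Qp} B = V_2\otimes_{\Qp} B$ on which the two $F$-actions agree, and observe that $v_1$ and $v_2$ each generate it. Your version is simply more explicit, using the decomposition $F\otimes_{\Qp} F \cong F\times F'$ to verify that this submodule is free of rank one over $B$ and that the $v_i$ are genuine generators (hence $\lambda\in B^{\times}$), whereas the paper leaves this implicit.
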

\begin{dem}
Consider the $B$-module $V_1\otimes_{\Qp} B =  V_2\otimes_{\Qp} B $ and consider on it the left action of $F$.
By construction, $v_1$ and $v_2$ generate the same eigenline with respect to this action, this gives the existence of $\lambda$.

For the second part one applies $\tilde{\sigma}$ to the equality $v_1=\lambda  v_2$. As the $v_i$ live in  $V_i\otimes_{\Qp} F\subset  V_i\otimes_{\Qp} B  $ the action on them coincides with $\sigma.$
\end{dem}
\begin{prop}\label{burrata}
Let $(V_1,q_1)$ and $(V_2,q_2)$ be two CM-quadratic forms and let $B$ be an $F$-algebra.
Suppose that it is given an identification of $B$-modules
\[V_1\otimes_{\Qp} B =  V_2\otimes_{\Qp} B \]
which is compatible with the $F$-actions and with the quadratic forms.

Let   $\lambda \in B^{\times}$ be the scalar constructed in Proposition $\ref{comp decomp}$ and suppose that
the involution $*$ of $F$  extends to an  endomorphism $*$  of $B$ as  $\Qp$-algebra.
Then $\lambda  \cdot  *(\lambda)$ belongs to $F_0^{\times}$ and the two quadratic forms $q_1$ and $q_2$ are isomorphic if and only if 
\begin{equation}\label{eq:lambda_norma} \lambda  \cdot  *(\lambda) \in N_{F/F_0}(F^{\times}). \end{equation}
 \end{prop}
 \begin{remark}
 	Notice that  $\lambda$ is well-determined up to a constant in $F^{\times}$ since 
 	so are  $v_1$ and $v_2$, {\itshape cf.} Proposition \ref{prop decomp}. Nevertheless the condition \eqref{eq:lambda_norma} is independent of such a choice. 
 	\end{remark}
\begin{dem}
Let $v_i$ be the vectors from  Proposition \ref{comp decomp}. By Theorem \ref{thm milnor}, the two quadratic forms are isomorphic if and only if they have the same gauge in $F_0^{\times}/N_{F/F_0}(F^{\times})$. By Proposition \ref{prop decomp2}(2), this means that they are isomorphic if and only if the ratio 
$2b( v_1,v_1^*)/2b( v_2,v_2^*)$, which belongs to $F_0^{\times}$, is a norm of an element in $F$.
On the other hand, by Proposition \ref{comp decomp}, this ratio is equal  to $\lambda  \cdot  *(\lambda)$. 
\end{dem}

%%%%%%%%%%%%%%%%%%%%%%%%%%%%%%%%%%%%%%%%%%%%%%%%%%%%%%%%%%%%%%%%%%%%%%%%%%%%%%%%%%%%%%%%%%%%%%%%%%%%%%%%%%%%%%%%%%%%%%%%%%%
\section{Reduction to tensor generators}\label{sect:Rtg}

We keep the same notation as in the previous section, in particular,  $F$   is a finite extension of $\Qp$ endowed with a non trivial involution $*\colon F\rightarrow F$ and an embedding $\iota$ in a fixed Galois closure $\tilde{F}\supset F$. Moreover $F_0\subset F$ is the subfield fixed by $*$ and $\Gamma$ is the set of $\Qp$-embeddings of $F$ in $\tilde{F}$. The maximal subfield of $\tilde{F}$ which is unramified over $\Qp$ will be denoted by $\tilde{F_a}$, see Convention \ref{sb:Conv_p-adic_fields}.

\begin{definition}\label{def CM module}
 A filtered-CM-space (with respect to $F$) is the datum of a   CM-space $V$ (Definition \ref{def CM quad form}) together with an integer $n_\sigma$ associated with each eigenline $L_\sigma\subset V\otimes_{\Qp} \tilde{F}$ of Proposition \ref{prop decomp}\eqref{prop decomp_2}.
 Such a module is called symmetric if
 $n_{\sigma}=-n_{\sigma^*}.$
\end{definition}

\begin{lemma}\label{equiv CM module}
The datum of a filtered-CM-space $V$ is equivalent to the datum of a filtered $\varphi$-module $D$ over $\tilde{F}$ such that:
\begin{enumerate}
	\item $\dim_{\Fatilde}(D)=[F:\Qp]$;
	\item $D$ is endowed with an action of $F$;
	\item $D$ has a basis of vectors fixed by the Frobenius $\varphi$.
\end{enumerate}
 The equivalence goes as follows. To $V$ one associates the module $D\coloneqq V\otimes_{\Qp}\tilde{F_a}$ endowed with the Frobenius which is the identity on $V$ and extends semi-linearly. The filtration $\Fil$ on $D_{\tilde{F}}\coloneqq V\otimes_{\Qp}\tilde{F}$ is the sum of the eigenlines $L_\sigma$ such that $n_\sigma \geq i.$ Conversely to $D$ one associates $V=D^{\varphi = \id}$ and $n_\sigma=v{\dR}(L_\sigma)$, see Convention \ref{sb:Conv_Fil} for the notation $v{\dR}$. 
\end{lemma}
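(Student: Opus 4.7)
The plan is to verify that each of the two constructions produces the right kind of object and that the two constructions are mutually inverse. All of this is a formal exercise once the eigenline decomposition of Proposition \ref{prop decomp}\eqref{prop decomp_2} is in hand, so I will only highlight the points that require actual verification.

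Going from $V$ to $D$, I would set $D \coloneqq V \otimes_{\Qp} \tilde{F_a}$ with Frobenius $\varphi_D \coloneqq \id_V \otimes \varphi$, where $\varphi$ is the absolute Frobenius of $\tilde{F_a}$. Any $\Qp$-basis of $V$ is then a $\varphi_D$-fixed $\tilde{F_a}$-basis of $D$, which gives conditions (1) and (3). The $F$-action on $V$ extends $\tilde{F_a}$-linearly to $D$ and commutes with $\varphi_D$, so condition (2) also holds. On the base change $D \otimes_{\tilde{F_a}} \tilde{F} = V \otimes_{\Qp} \tilde{F} = \bigoplus_{\sigma \in \Gamma} L_\sigma$ provided by Proposition \ref{prop decomp}\eqref{prop decomp_2}, the recipe $\Fil^{i} \coloneqq \bigoplus_{n_\sigma \geq i} L_\sigma$ defines the required decreasing, exhaustive and separated filtration.

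For the inverse direction I would set $V \coloneqq D^{\varphi_D = \id}$. Condition (3) produces a $\varphi_D$-fixed $\tilde{F_a}$-basis of $D$ whose $\Qp$-span is contained in $V$; since $\tilde{F_a}^{\varphi = \id} = \Qp$, one deduces that this span equals $V$ and that the natural map $V \otimes_{\Qp} \tilde{F_a} \to D$ is an isomorphism. In particular $\dim_{\Qp} V = [F:\Qp]$. The action of $F$ commutes with $\varphi_D$ and thus restricts to $V$; by faithfulness together with the dimensional count, $V$ becomes free of rank one over $F$, hence a CM-space in the sense of Definition \ref{def CM quad form}. Using the canonical decomposition $F \otimes_{\Qp} \tilde{F} \cong \tilde{F}^{\Gamma}$ one recovers the eigenline decomposition of $V \otimes_{\Qp} \tilde{F}$, and one then defines $n_\sigma \coloneqq \vdR(L_\sigma)$.

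The only step that deserves more than a routine check is that the filtration on $D \otimes_{\tilde{F_a}} \tilde{F}$ is automatically a sum of eigenlines $L_\sigma$; this is the reason the data $\{n_\sigma\}$ is equivalent to the filtration. The point is that the filtration is by $F$-stable $\tilde{F}$-subspaces (since $F$ acts on the filtered $\varphi$-module) and the $L_\sigma$ are precisely the simultaneous eigenlines for this $F$-action via the characters $\sigma \in \Gamma$, with distinct characters. Hence any $F$-stable $\tilde{F}$-subspace of $\bigoplus_\sigma L_\sigma$ is a direct sum of some of the $L_\sigma$, and the filtration necessarily takes the form $\Fil^{i} = \bigoplus_{n_\sigma \geq i} L_\sigma$ for uniquely determined integers $n_\sigma = \vdR(L_\sigma)$. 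Once this is settled, reading off the $n_\sigma$ and reassembling them into a filtration are visibly inverse operations, and the compatibilities of Frobenius, $F$-action and filtration under $V \leftrightarrow D$ all follow by construction.
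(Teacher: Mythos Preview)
Your proof is correct and follows exactly the same approach as the paper. The paper's own argument is a single sentence---``By definition of filtered-$\varphi$-module each $\Fil$ is stable under the action of $F$, hence it is the sum of eigenlines. The equivalence follows.''---and you have spelled out precisely this observation together with the routine verifications the paper leaves implicit.
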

\begin{dem}
By definition of filtered-$\varphi$-module  each  $\Fil$ is stable under the action of $F$, hence it is the sum of eigenlines. The equivalence follows.
\end{dem}
\begin{lemma}\label{CM tensor structure}
We define the tensor product of two 
filtered-CM-spaces $(V,n_\sigma)$ and $(V',n'_\sigma)$  as   $$(V \otimes_F V' ,n_\sigma + n'_\sigma).$$ 
Under the equivalence of Lemma $\ref{equiv CM module}$ this corresponds to taking the two corresponding  filtered-$\varphi$-modules $D$ and $D'$ and to considering inside the  filtered-$\varphi$-module $D\otimes D'$ the sub-filtered-$\varphi$-module where the two $F$-actions coincide.
\end{lemma}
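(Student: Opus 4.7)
The strategy is to unpack both constructions after extending scalars to $\tilde{F}$, exhibit them as concrete subspaces of a common ambient space, and then check compatibility of the Frobenius and the filtration.

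First, I will translate both sides into $\tilde{F}$-linear data using Proposition \ref{prop decomp}. Writing $V\otimes_{\Qp}\tilde{F}=\bigoplus_\sigma L_\sigma$ and $V'\otimes_{\Qp}\tilde{F}=\bigoplus_\tau L'_\tau$, I obtain
\[(D\otimes_{\tilde{F_a}}D')\otimes_{\tilde{F_a}}\tilde{F}\;\cong\;\bigoplus_{\sigma,\tau\in\Gamma}L_\sigma\otimes_{\tilde{F}}L'_\tau,\]
where each summand $L_\sigma\otimes_{\tilde{F}} L'_\tau$ is a simultaneous eigenline for the two $F$-actions on $D\otimes D'$, with eigencharacters $\sigma$ from $D$ and $\tau$ from $D'$. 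Hence the subspace on which the two $F$-actions coincide is $\bigoplus_{\sigma} L_\sigma\otimes_{\tilde{F}} L'_\sigma$ after base change to $\tilde{F}$; the corresponding subspace over $\tilde{F_a}$ is recovered by faithfully flat descent, since both this subspace and a natural complement are defined over $\tilde{F_a}$.

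Second, I will identify this diagonal sum with $V\otimes_F V'$ tensored up to $\tilde{F}$. The canonical surjection $V\otimes_{\Qp}V'\twoheadrightarrow V\otimes_F V'$ has kernel generated by the difference of the two $F$-actions. After tensoring with $\tilde{F}$, this difference acts on the $(\sigma,\tau)$-summand by the scalar $\sigma(\alpha)-\tau(\alpha)$, which vanishes for every $\alpha\in F$ if and only if $\sigma=\tau$. Hence the kernel of $(V\otimes_{\Qp}V')\otimes\tilde{F}\twoheadrightarrow (V\otimes_F V')\otimes\tilde{F}$ is exactly $\bigoplus_{\sigma\neq\tau}L_\sigma\otimes L'_\tau$, so the quotient is canonically isomorphic to the diagonal subspace $\bigoplus_\sigma L_\sigma\otimes L'_\sigma$. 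Descent to $\tilde{F_a}$ then yields the claimed isomorphism of $\tilde{F_a}$-vector spaces.

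Finally, I will verify that this identification is compatible with Frobenius and filtration. Frobenius-equivariance is immediate: on both sides the Frobenius is induced by the identity on a $\Qp$-structure ($V\otimes_F V'$ on one side, the image of $V\otimes_{\Qp}V'$ in $D\otimes D'$ on the other) extended $\tilde{F_a}$-semilinearly, and the quotient map is $\Qp$-linear. For the filtration, the convolution rule $\Fil[n](A\otimes B)=\sum_{i+j=n}\Fil[i]A\otimes\Fil[j]B$ puts $L_\sigma\otimes L'_\sigma$ in filtration degree $n_\sigma+n'_\sigma$, matching the assignment $n''_\sigma=n_\sigma+n'_\sigma$ on the eigenline $L''_\sigma$ of $V\otimes_F V'$ under the equivalence of Lemma \ref{equiv CM module}. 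The delicate step is the diagonal description of $V\otimes_F V'$ after base change to $\tilde{F}$; once this is in place, the rest of the argument is bookkeeping.
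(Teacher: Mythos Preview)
Your proof is correct and follows essentially the same approach as the paper's: decompose $(D\otimes D')_{\tilde F}$ into the eigenlines $L_\sigma\otimes L'_\tau$, identify the locus where the two $F$-actions coincide with the diagonal $\bigoplus_\sigma L_\sigma\otimes L'_\sigma$, and read off the filtration degree $n_\sigma+n'_\sigma$ from the convolution rule. Your version is more explicit than the paper's---in particular you spell out the identification of the diagonal with $(V\otimes_F V')\otimes\tilde F$ via the quotient map and check Frobenius compatibility, whereas the paper treats these as evident---but the underlying argument is the same.
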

\begin{dem}
Let $\{L_\sigma\}_{\sigma \in \Gamma}$ and $\{L'_{\sigma}\}_{\sigma \in \Gamma}$ be the eigenlines associated with the actions of $F$ on $D$ and $D'$. Then the   action of $F\otimes_{\Qp}F$ on  $D\otimes D'$ has $\{L_\sigma \otimes L'_{\tau}\}_{\sigma,\tau \in \Gamma}$ as eigenlines. By definition of the filtration on a tensor product one has $v{\dR} (L_\sigma \otimes L'_{\tau}) = n_\sigma + n'_\tau$. On the other hand the submodule of $D\otimes D'$   where the two $F$-actions coincide has $L_\sigma \otimes L'_{\sigma}$ as eigenline corresponding to $\sigma \in \Gamma$, hence we do have that $n_\sigma + n'_\sigma$ is its de Rham valuation.
\end{dem}
\begin{definition}\label{fundamental module}
The fundamental filtered-CM-space (with respect to $(F,\iota)$) is the (symmetric) filtered-CM-space $F(\iota)$ whose underling CM-space is $F$ 
and  whose associated integers are 
 \[n_{\iota}=1, \hspace{0.5cm} n_{\iota^*}=-1,  \hspace{0.5cm}  \textrm{and} \hspace{0.5cm} n_{\sigma}=0,  \hspace{0.5cm} \forall \sigma\neq \iota,\iota^* .\]
\end{definition}
\begin{remark} \label{rem:tensor_gen}
By construction, the set of fundamental filtered-CM-spaces $\{F(\tau)\}_{\tau \in \Gamma}$ is a family of tensor generators of the category of symmetric filtered-CM-spaces. 
\end{remark}

\begin{prop}\label{prop admissibility}
If a filtered-CM-space is symmetric (Definition $\ref{def CM module}$) then its corresponding  filtered-$\varphi$-module is admissible.
\end{prop}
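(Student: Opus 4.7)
The plan is to verify weak admissibility of $D$ directly. By Lemma \ref{equiv CM module} we have $D=V\otimes_{\Qp}\Fatilde$ with $\Fro=\id_V\otimes\Fro_{\Fatilde}$, and $D_{\tilde{F}}=\bigoplus_{\sigma\in\Gamma}L_\sigma$ with $L_\sigma$ placed in weight $n_\sigma$. The endpoint equality $t_H(D_{\tilde{F}})=t_N(D)=0$ is immediate: $V$ provides a basis of Frobenius-fixed vectors so $D$ is isoclinic of slope zero; and since $*$ acts freely on $\Gamma$ (any $\sigma=\sigma\circ *$ would force $\sigma$ to factor through the proper subfield $F_0$, contradicting injectivity), $\Gamma$ splits into $*$-pairs and the hypothesis $n_\sigma=-n_{\sigma^*}$ gives $t_H(D_{\tilde{F}})=\sum_\sigma n_\sigma=0$. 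Since $\Fatilde/\Qp$ is unramified and $\Fro_{\Fatilde}$ generates its Galois group, finite Galois descent identifies every sub-$\Fro$-module of $D$ with $W\otimes_{\Qp}\Fatilde$ for $W\coloneqq(D')^{\Fro=\id}\subset V$. In particular $t_N(D')=0$, so weak admissibility reduces to proving
\[
t_H(W\otimes_{\Qp}\tilde{F})\leq 0\qquad\text{for every $\Qp$-subspace } W\subset V.
\]

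To handle this inequality, I would factor the filtered-CM-space $(V,(n_\sigma))$ into fundamentals. Setting $\Gamma^{+}\coloneqq\{\sigma\in\Gamma:n_\sigma>0\}$ and writing $V_\tau$ for the fundamental filtered-CM-space attached to $(F,\tau)$ in Definition \ref{fundamental module}, the additivity of weights under the CM-tensor product (Lemma \ref{CM tensor structure}) combined with the symmetry condition yields an isomorphism of filtered-CM-spaces
\[
V\;\cong\;\bigotimes_F\nolimits^{\mathrm{CM}}\bigotimes_{\tau\in\Gamma^{+}}V_\tau^{\otimes_F n_\tau}.
\]
The filtered-$\Fro$-module $D_\tau$ attached to each fundamental $V_\tau$ is admissible; this is the content of the appendix, where it is realized as the filtered-$\Fro$-module of a Lubin--Tate character of $F$ in the sense of Colmez \cite{MR1956055}. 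Faltings's theorem \cite{MR1463696} then gives admissibility of the $\Fro$-tensor product $\bigotimes_\tau D_\tau^{\otimes n_\tau}$; and by Lemma \ref{CM tensor structure}, $D$ sits inside this $\Fro$-tensor product as the sub-$\Fro$-module cut out by imposing the coincidence of all the $F$-actions, equipped with the induced filtration.

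To conclude I would invoke the standard fact that a sub-$\Fro$-module of an admissible filtered-$\Fro$-module whose induced filtration satisfies $t_H=t_N$ is itself admissible: any further sub-$\Fro$-module of $D$ is also a sub-$\Fro$-module of the ambient $\bigotimes_\tau D_\tau^{\otimes n_\tau}$ with the same induced filtration, so the required inequality $t_H\leq t_N$ on subs is inherited from the ambient admissibility. Combined with the equalities $t_H(D_{\tilde{F}})=t_N(D)=0$ established at the start, this yields the admissibility of $D$. The main obstacle I expect is the admissibility of the fundamental filtered-CM-spaces; establishing it requires the explicit description of the $p$-adic periods associated to Lubin--Tate characters, which is precisely the role of the appendix.
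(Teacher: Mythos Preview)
Your reduction to the fundamental filtered-CM-spaces is essentially the same as the paper's: both factor a symmetric $(V,(n_\sigma))$ as a CM-tensor product of fundamentals $V_\tau$, use that admissibility is stable under $\varphi$-tensor products, and then pass to the sub-$\varphi$-module on which the $F$-actions coincide (your ``$t_H=t_N$ sub of admissible is admissible'' is exactly the paper's ``admissibility is stable under direct factor''). Your first paragraph, reducing weak admissibility to checking $t_H(W\otimes\tilde F)\le 0$ for all $W\subset V$, is also correct and useful.

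The genuine gap is the base case. You assert that the filtered $\varphi$-module $D_\tau$ attached to the fundamental $V_\tau$ is the Lubin--Tate module of the appendix, and that its admissibility is proved there. This identification is wrong. The Lubin--Tate module $D_\pi$ of the appendix is a filtered $\varphi$-module over $F$ (not over $\tilde F$) with a single Newton slope $1/ef$ and Hodge--Tate weights $\{0,\ldots,0,1\}$; by contrast $D_\tau$ has a $\varphi$-fixed basis, hence Newton slope $0$, and Hodge--Tate weights $\{-1,0,\ldots,0,1\}$. The relation between the two is only established later (Proposition~\ref{prop:explicit_lambda}): the fundamental period is $c\cdot\alpha/\alpha^{*}$, a \emph{ratio} of a Lubin--Tate period and its $*$-conjugate. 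That description already presupposes the admissibility of $D_\tau$ (the fundamental period in Definition~\ref{fundamental period} is defined via the crystalline comparison, which needs admissibility), so invoking the appendix here would also be circular.

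The paper instead proves the base case directly: for the fundamental $D$, the only sub-$\varphi$-module that could violate weak admissibility is one containing the eigenline $L_\iota$ of weight $1$; using that such a submodule can be taken $F$-stable and that $D^{\varphi=\id}=V$ is a one-dimensional $F$-space, one shows it must be all of $D$. Your own first paragraph sets up exactly this computation; you could finish by carrying it out for the fundamental rather than appealing to the appendix.
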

\begin{dem}
Admissibility is stable under tensor product and direct factor, hence it is enough to study the set $\{F(\tau)\}_{\tau \in \Gamma}$
of tensor generators of the symmetric    filtered-$\varphi$-modules, {\itshape cf.} Remark \ref{rem:tensor_gen}. 
%Choose as such a set the collection  $\{V(\tau)\}_{\tau \in \Gamma}$ where a given $V(\tau)$ is determined by the choice of the following list of integers %\[n_{\tau}=1, \hspace{0.5cm} n_{\tau^*}=-1,  \hspace{0.5cm}  \textrm{and} \hspace{0.5cm} n_{\sigma}=0,  \hspace{0.5cm} \forall \sigma\neq \tau,\tau^* .\]
%On the other hand a given $V(\tau)$ is nothing but the fundamental filtered-CM-space   where the fixed embedding $F \subset \tilde{F}$ is chosen to be    $\tau$ (instead of $\iota$). 
Hence it is enough to show admissibility for the fundamental filtered-CM-space $F(\iota)$ (for all pairs $(F,\iota)$). 
Let $D$ be the  filtered-$\varphi$-module corresponding to $F(\iota)$ by Lemma \ref{equiv CM module}. 
By \cite{MR1779803} it is enough to check the condition of weakly-admissibility for $D$.
The Newton polygon is constantly zero by construction. Hence the only sub-filtered-$\varphi$-modules which might contradict the condition of weakly-admissibility are those containing the eigenline $L$ of de Rham valuation one. Let $N$ be such a module and let us show that $N=D$, which will give the admissibility.
%
%As the action of $F$  on $D$ respects the structure of filtered-$\varphi$-module then
%\[N'=\cap_{f\in F} f(N)\] is again a sub-filtered-$\varphi$-module. Moreover, as $L$ is an eigenline, $N'$ contains again $L$. We can hence suppose that $N'=N$, the advantage is now that $F$ acts on $N$ as well. 
%
 It is not restrictive to assume that $N$ is stable through the action of $F$, by \cite{MR1944572}*{Proposition~3.1.1.5} (or by a short direct argument).
Then  the inclusion $N^{\varphi = \id} \subset D^{\varphi = \id}$ is $F$-equivariant, hence $N^{\varphi = \id}=0$ or $N^{\varphi = \id} =  D^{\varphi = \id}$. 
On the other hand,  as $D$ is generated by its $\varphi$-invariant part, the Newton slopes of  $N$ are also zero, {\itshape cf.} \cite[\S(1.3) and Lemma 1.3.4]{MR563463}. Now, it is well-know that for any $\Fro$-module $Q$ we have 
$\dim_{\Qp}(Q^{\Fro = \id} )\leq \dim_{\tilde{F}_a}(Q) $. By construction $\dim_{\Qp}(D^{\Fro = \id}) = \dim_{\tilde{F}_a} (D)$ and $N$ is not zero (it contains $L$); thus by arguing on the dimension over $\Qp$ in the exact sequence 
$$ 0\rightarrow N^{\varphi = \id} \rightarrow D^{\Fro = \id} \rightarrow (D/N)^{\Fro = \id},$$
we get $N^{\varphi = \id}= D^{\Fro = \id}$. Therefore $N=D$.
\end{dem}
\begin{definition}\label{fundamental period}
Let $V$ be a symmetric filtered-CM-space, $D$ be the corresponding  admissible filtered $\varphi$-module 
({\itshape cf.} Lemma~\ref{equiv CM module} and Proposition \ref{prop admissibility}). Let  $W$ be the crystalline representation of $G_{\tilde{F}}$ corresponding to $D$ via Fontaine's $p$-adic comparison theorem. Consider the identification 
\[  W \otimes_{\Qp} \Bcris  = D \otimes_{\Fatilde} \Bcris = V \otimes_{\Qp} \Bcris, \]
and the induced one by tensoring it with $F$ over $\Fa$,
\[V\otimes_{\Qp} \BcrisF =   W \otimes_{\Qp} \BcrisF,\]
where \BcrisF\ is defined in Convention \ref{sb:Conv_pHT}\eqref{BcrisE1}.
Then we define the period associated with $V$ as the invertible element $\lambda_V \in \BcrisF^{\times}$ deduced from Proposition 
\ref{comp decomp} applied to 
  $V_1=V , V_2=W$ and $B = \BcrisF$. We recall that $\lambda_V$ is well-determined up to a constant in $F^{\times}$ since 
	it is defined as the period such that $$v_1 = \lambda_V v_2,$$ for a given choice of a  basis $v_1$ (resp. $v_2$) of the $F$-line in 
	$V \otimes_{\Qp} F$ (resp. $W \otimes_{\Qp} F$) where the two $F$-actions coincide, see Proposition \ref{prop decomp}. 
 
 We define \emph{the fundamental period} (with respect to $(F,\iota)$) 
 \begin{equation} \label{eq:the_fund_period}
 	\lambda=\lambda_{F(\iota)}
 	\end{equation}  
 	as the period associated with the  fundamental  filtered-CM-space $F(\iota)$ of Definition \ref{fundamental module}.
 \end{definition}

The goal of the next two sections is to establish the following theorem, whose proof is finally given in \ref{proof:not_a_norm}.
 
 \begin{theorem}\label{th:not_a_norm}
 Let $\lambda\in\BcrisF$ be the fundamental period defined in Definition $\ref{fundamental period}$ above. 
 For every endomorphism $*\colon\BcrisF\rightarrow\BcrisF$ extending the involution $*$ of $F$, we have  
 \begin{equation} \label{eq:not_a_norm}
 \lambda \cdot {*}(\lambda) \notin N_{F/F_0}(F^{\times}).
 \end{equation}
 \end{theorem}
\begin{remark}\label{rem:not_a_notm}
	\begin{enumerate}
		\item Notice that we have $\lambda \cdot {*}(\lambda) \in F_0^{\times}$ by applying Proposition \ref{burrata}, with $B$ equals to $\BcrisF$.
		\item \label{rem:not_a_notm2}
		 It is enough to prove \eqref{eq:not_a_norm} for one extension $*\colon \BcrisF\rightarrow\BcrisF$, since by 
		Proposition \ref{burrata}, the condition \eqref{eq:not_a_norm} is equivalent to a statement not involving the choice of the extension.
		\item In \ref{sb:BcrisF_more} we will construct such an extension $*=*\cris$ %$\colon \BcrisF\rightarrow\BcrisF$ 
		and we will work with it through the next two sections.
	\end{enumerate}
\end{remark}
% 
%\noindent The proof of Theorem \ref{th:not_a_norm} will be given in \ref{proof:not_a_norm}.
%
 \begin{cor}\label{cime di rapa}
  Let $V$ be a symmetric filtered-CM-space, $\lambda_V$ be the corresponding period (Definition $\ref{fundamental period}$) and $*$ as in Theorem \ref{th:not_a_norm}. 
  Then 
 $\lambda_V  \cdot{*}(\lambda_V) \in F_0^{\times}$ belongs to $N_{F/F_0}(F^{\times})$ if and only if the non negative  integer 
$$\sum\limits_{\sigma, n_\sigma \geq 0} n_\sigma$$
 is even, where the sum is taken over all $\sigma\in\Gamma$ such that $n_\sigma \geq 0$.
 \end{cor}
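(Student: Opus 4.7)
\begin{dem}
The plan is to decompose $V$ as a tensor product of fundamental filtered-CM-spaces attached to various embeddings, prove that the class $[\lambda_V\lambda_V^*]\in F_0^{\times}/N_{F/F_0}(F^{\times})$ is multiplicative under this tensor product, and conclude by applying Theorem \ref{th:not_a_norm} to each factor.

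First, I note that $*$ acts freely on $\Gamma$: if $\sigma\circ *=\sigma$ for some $\sigma\in \Gamma$, injectivity of $\sigma$ forces $*$ to be the identity on $F$, contradicting the hypothesis. Thanks to the symmetry $n_{\sigma^*}=-n_\sigma$, I can choose a set of $*$-orbit representatives $\Gamma_+\subset \Gamma$ such that $n_\tau\geq 0$ for every $\tau\in \Gamma_+$; with this choice $\sum_{\sigma,\,n_\sigma\geq 0} n_\sigma=\sum_{\tau\in \Gamma_+} n_\tau$. For each $\tau\in \Gamma_+$, let $V(\tau)$ denote the fundamental filtered-CM-space with respect to $(F,\tau)$ of Definition \ref{fundamental module}. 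Since Lemma \ref{CM tensor structure} says that the integers $n_\sigma$ add under tensor products, a direct inspection of the integers attached to each embedding yields an isomorphism $V\cong \bigotimes_{\tau\in \Gamma_+} V(\tau)^{\otimes n_\tau}$ of symmetric filtered-CM-spaces, which are all admissible by Proposition \ref{prop admissibility}.

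The key intermediate step is the multiplicativity of the period modulo $F^{\times}$: if $V=V_1\otimes V_2$ in the sense of Lemma \ref{CM tensor structure}, then $\lambda_V\equiv \lambda_{V_1}\lambda_{V_2}$ in $\BcrisF^{\times}/F^{\times}$. This should follow from the functoriality of Fontaine's crystalline comparison with respect to tensor products, combined with the observation that the distinguished eigenvector of Proposition \ref{prop decomp} for the tensor product $V$ (and for its associated crystalline representation $W=W_1\otimes W_2$) coincides, modulo $F^{\times}$, with the tensor product of the analogous eigenvectors for the factors. Extending $*$ to $\BcrisF=\Bcris\otimes_{\Fa}F$ via its action on the second factor, this yields
\[[\lambda_V\lambda_V^*]\;=\;\prod_{\tau\in\Gamma_+}[\lambda_{V(\tau)}\lambda_{V(\tau)}^*]^{n_\tau}\]
in $F_0^{\times}/N_{F/F_0}(F^{\times})$.

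To conclude, Theorem \ref{th:not_a_norm} applies to each $V(\tau)$, since the role of the distinguished embedding $\iota$ in Definition \ref{fundamental period} can be played by any $\tau$; hence every $[\lambda_{V(\tau)}\lambda_{V(\tau)}^*]$ is the nontrivial class of $F_0^{\times}/N_{F/F_0}(F^{\times})$. Since this group has order two by Theorem \ref{thm milnor}\eqref{thm milnor:1}, the product displayed above is trivial if and only if $\sum_{\tau\in \Gamma_+} n_\tau$ is even, which is exactly the parity condition of the statement. The main obstacle is the multiplicativity step: it requires keeping careful track of the internal $F$-action on $V$ versus the external $F$-action (from the coefficient extension $-\otimes_{\Qp}F$) on both sides of the comparison isomorphism, and verifying that the distinguished eigenlines decompose compatibly under Lemma \ref{CM tensor structure}.
\end{dem}
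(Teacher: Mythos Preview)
Your approach is essentially the same as the paper's: reduce to tensor generators (the fundamental filtered-CM-spaces $V(\tau)$), use multiplicativity of the period, and apply Theorem~\ref{th:not_a_norm} together with the fact that $F_0^{\times}/N_{F/F_0}(F^{\times})$ has order two. The paper treats the multiplicativity of $V\mapsto\lambda_V$ as immediate ``by construction'' rather than as an obstacle; once you choose eigenvectors $v_1,v_2$ for $V_1,V_2$ and $w_1,w_2$ for the corresponding crystalline representations, the elements $v_1\otimes v_2$ and $w_1\otimes w_2$ are the eigenvectors for the tensor product (Lemma~\ref{CM tensor structure}), so $\lambda_{V_1\otimes V_2}=\lambda_{V_1}\lambda_{V_2}$ on the nose for these choices.

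One genuine slip: your extension of $*$ to $\BcrisF=\Bcris\otimes_{\Fa}F$ ``via its action on the second factor'' is not well-defined when $F/F_0$ is unramified, since then $*$ moves $\Fa$. The correct extension is the paper's $*\cris\coloneqq\Fro[\mathrm{cris}]^{\nu}\otimes_{\Fro^{\nu}}*$ from \S\ref{sb:BcrisF_more}; with this, multiplicativity of $\lambda_V\lambda_V^{*\cris}$ follows from that of $\lambda_V$ since $*\cris$ is a ring endomorphism.
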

 
 \begin{dem}
Recall that the group $F_0^{\times}/N_{F/F_0}(F^{\times})$ has cardinality two (Theorem \ref{thm milnor}). By construction 
the map $V\mapsto \lambda_V$ from symmetric filtered-CM-spaces to $\BcrisF^{\times}$ is multiplicative on tensor products.
In particular the statement is stable under tensor product, hence it is enough to check it on tensor generators. By Remark \ref{rem:tensor_gen} it is enough to show  the statement for the fundamental  filtered-CM-space $F(\iota)$, which is Theorem \ref{th:not_a_norm} above.
\end{dem}

\begin{remark}\label{cozze}
By construction, the integer $-\sum_{\sigma, n_\sigma \geq 0} n_\sigma$   is the %(opposite of the)
 minimum  of the Hodge polygon of the filtered $\Fro$-module associated with the symmetric filtered-CM-space $V$ of Corollary \ref{cime di rapa} above.
%zero.
\end{remark}

%%%%%%%%%%%%%%%%%%%%%%%%%%%%%%%%%%%%%%%%%%%%%%%%%%%%%%%%%%%%%%%%%%%%%%%%%%%%%%%%%%%%%%%%%%%%%%%%%%%%%%%%%%%%%%%%%%%%%%%%%%%
\section{Characterization of $p$-adic periods}\label{sect:p-adic_periods}

In $\eqref{eq:the_fund_period}$ we defined a period $\lambda$ in $\BcrisF$, which we called the fundamental period. 
The goal of this section is to give some properties of $\lambda$ as element of $\BcrisF$ which are enough to 
characterize it up to a constant in $F^{\times}$, see  Propositions \ref{prop:4prop_lambda} and \ref{prop:unicity}.

We keep the same notation as in Sections \ref{sect:CQ-qf} and \ref{sect:Rtg}, in particular $F$ is a finite extension of \Qp\ with  ramification index $e$ and residual degree $f$. 
Recall that $F$ is endowed with an involution $*\colon F\rightarrow F$, whose  subfield of fixed points  is denoted by $F_0$, which has not to be confused with the maximal unramified subfield of $F$, denoted by $\Fa$.
The degree of $F$ over $F_0$ is $2$, %and the degree of $F_0$ over \Qp\ is denoted by $n$ ; hence $2n=ef$.  
so $ef = 2[F_0:\Qp]$.

We start this section with some preliminary constructions and lemmas, certainly known to experts, that we recall here for the convenience of the reader. 
For the notation on the rings of $p$-adic periods we refer to Convention \ref{sb:Conv_pHT}.

%\subsection{}\label{sb:def_nu} %%%OLD to be erased
%Let us choose an embedding of $F$ in $\Bdr$ and let be  
 %$\tilde{F}$ the Galois closure of $F$ in $\Bdr$; when needed we will denote by $\iota\colon F\hookrightarrow \tilde{F}$ this embedding. As in Sections \ref{sect:CQ-qf} and \ref{sect:Rtg} we denote by $\Gamma$ the set (pointed by $\iota$) 
%of \Qp-embeddings of $F$ in $\tilde{F}$ ; we will denote still by $*$ the embedding $\iota\circ * \in \Gamma$.
%Since $\Fa/\Qp$ is a cyclic unramified extension, every $\sigma$ in $\Gamma$ stabilizes $\Fa$ and its restriction to $\Fa$ is a power of the absolute Frobenius $\Fro$ of $\Fa$: we will set
 %$\sigma_{|\Fa} = \Fro^{\nu(\sigma)}$, for a unique integer  $0\leq \nu(\sigma)\leq f-1$. By construction, we have $\nu(\iota) =0$.
 %We put $\nu\coloneqq\nu(*)$, it is either $0$ or $f/2$ whether $F/F_0$ is ramified or unramified. 

\subsection{}\label{sb:def_nu} 
In Sections \ref{sect:CQ-qf} and \ref{sect:Rtg} we have fixed a Galois closure $\tilde{F}$ of $F$ and 
an  embedding $\iota\colon F\hookrightarrow \tilde{F}$; we have then denoted by $\Gamma$ the set %(pointed by $\iota$) 
of \Qp-embeddings of $F$ in $\tilde{F}$ and  by $*$ the embedding $\iota\circ * \in \Gamma$.
From now on, let us  consider $\tilde{F}$ as a subfield of $\Bdr$ by choosing an embedding of $\tilde{F}$ in $\Bdr$. We can then identify  $\Gamma$ to the  set  
of \Qp-embeddings of $F$ in $\Bdr$. %by composing with this embedding. 
We will also identify $F$ to its image $\iota(F)\subset\tilde{F}\subset\Bdr$.  
Since $\Fa/\Qp$ is a cyclic unramified extension, every $\sigma$ in $\Gamma$ stabilizes $\Fa$ and its restriction to $\Fa$ is a power of the absolute Frobenius $\Fro$ of $\Fa$: we will set
 $\sigma_{|\Fa} = \Fro^{\nu(\sigma)}$, for a unique integer  $0\leq \nu(\sigma)\leq f-1$. By construction, we have $\nu(\iota) =0$.
 We put $\nu\coloneqq\nu(*)$, it is either $0$ or $f/2$ whether $F/F_0$ is ramified or unramified.

\subsection{}\label{sb:BcrisF_more} 
We keep the notation of \ref{sb:def_nu}.
We denote by \BcrisF\ the smallest subring of \Bdr\ containing \Bcris\ and $F$, {\itshape cf.} 
Convention~\ref{sb:Conv_pHT}\eqref{BcrisE1}; the ring \BcrisF\ is 
 identified to the image of the natural map $\Bcris\otimes_{\Fa}F\rightarrow\Bdr$.
Similarly, we will use $\BcrisFtilde$, which is isomorphic to $\Bcris\otimes_{\tilde{F}_a}\tilde{F}$, 
where $\tilde{F}_a$ is the absolute unramified subfield of $\tilde{F}$. 

We recall  that the ring 
\BcrisF\ is endowed with an $F$-linear endomorphism $\Fro^f\colon\BcrisF\rightarrow\BcrisF$, 
defined as $ \Fro[\mathrm{cris}]^f \otimes_{\id[\Fa]} \id[F]$, where $\Fro[\mathrm{cris}]$ is the Frobenius of \Bcris, {\itshape cf.} Convention \ref{sb:Conv_pHT}\eqref{BcrisE3}.

%%%%%%%%%%%%%%%%%%%%%%%%%%%%%%%%%%%%%%%%%%%%%%%%%%%%%%%%%%%%%%%%%%%%%%%%%%%%%%%%%%%%%%%%%%%%%%%%%%%%%%%%%%%%%%%%%%%%%%%%%%%%%%%%%
Furthermore, every $\sigma$ in $\Gamma$ extends to an homomorphism,  denoted by $\sigma\cris\colon \BcrisF \rightarrow \BcrisFtilde$, defined as 
\begin{equation}
\sigma\cris\colon\BcrisF=\Bcris\otimes_{\Fa}F \xrightarrow{\Fro[\mathrm{cris}]^{\nu(\sigma)} \otimes \sigma}
\Bcris\otimes_{\tilde{F}_a}\tilde{F}=\BcrisFtilde,
\label{eq:sigma_on_BcrisF}
\end{equation}
where the product $\Fro[\mathrm{cris}]^{\nu(\sigma)} \otimes \sigma$ 
is taken over the inclusion $\Fa \hookrightarrow {\tilde{F}_a}$, 
{\itshape cf.} \cite[A.II, \S3, n\textsuperscript{o}3, p.53]{MR0274237}.
Note that  $\iota\cris$ coincides with the inclusion $\BcrisF\subset \BcrisFtilde$ as subrings of $\Bdr$.
Moreover, although we will not need it in following, it is not difficult to show that $\sigma\cris$ is injective,
by using the fact that the natural maps $\Bcris\otimes_{\Fa}F\rightarrow \Bdr$, $\Bcris\otimes_{\tilde{F}_a}\tilde{F}\rightarrow \Bdr$ and the Frobenius $\Fro[\mathrm{cris}]$ are.

 Finally, the involution $*$ of $F$  extends also to an endomorphism  $*\cris\colon\BcrisF\rightarrow\BcrisF$ 
by putting $*\cris\coloneqq\Fro[\mathrm{cris}]^{\nu}\otimes_{\Fro^{\nu}} *$. Beware that in general it is not an involution of \BcrisF: precisely, as it follows from Lemma \ref{lemma:Gamma_cris} below,
if $F/F_0$ is unramified, then $*\cris^2 = \Fro[\mathrm{cris}]^f$; if $F/F_0$ is ramified, then $*\cris^2 = \id[\BcrisF]$.  

\begin{lemma}\label{lemma:Gamma_cris}
 We keep the notation of $\ref{sb:def_nu}$ and $\ref{sb:BcrisF_more}$.
\begin{enumerate}
	\item \label{lemma:Gamma_cris:1}
	For all $\sigma$ in $\Gamma$,
%$$ \sigma\cris\circ *\cris = \begin{cases} (\sigma\circ *)\cris & \text{if } 0\leq \nu(\sigma)<\frac{f}{2}\,; \\ 
                                           %(\sigma\circ *)\cris\circ \Fro[\mathrm{cris}]^{2\nu(*)} & \text{if }\frac{f}{2}\leq \nu(\sigma)\leq f-1.\end{cases}$$
%In other words, 
we have $\sigma\cris\circ *\cris = (\sigma\circ *)\cris$ except if $F/F_0$ is unramified and $\nu(\sigma)\geq f/2$, as in that case $\sigma\cris\circ *\cris = (\sigma\circ *)\cris \circ \Fro[\mathrm{cris}]^f$.																			
\item \label{lemma:Gamma_cris:2}
$*\cris\circ \Fro[\mathrm{cris}]^f =
 \Fro[\mathrm{cris}]^f \circ *\cris.$ 
\end{enumerate}
\end{lemma}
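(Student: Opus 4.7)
My plan is to reduce everything to a direct bookkeeping calculation on pure tensors $x\otimes \alpha \in \Bcris\otimes_{\Fa} F$, using the explicit formulas given in \ref{sb:BcrisF_more}. The one preliminary observation I need is that both $*$ and every $\sigma\in\Gamma$ preserve $\Fa$: this is automatic since $\Fa$ is characterized as the maximal subextension of $F/\Qp$ unramified over $\Qp$ and is therefore stable under any $\Qp$-linear embedding of $F$ into its Galois closure. Consequently $\sigma|_{\Fa}=\Fro^{\nu(\sigma)}$ (by definition of $\nu(\sigma)$) and $*|_{\Fa}=\Fro^{\nu}$; the latter equality is precisely what justifies the case distinction giving $\nu=0$ or $\nu=f/2$, since in the ramified case $\Fa$ coincides with the maximal unramified subfield of $F_0$ and is fixed by $*$, while in the unramified case $\Fa$ is a quadratic unramified extension of that subfield, on which $*$ acts as the nontrivial Galois element $\Fro^{f/2}$.

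For part \eqref{lemma:Gamma_cris:2}, the two operators act on different tensor factors in essentially independent ways: applied to $x\otimes \alpha$, both $*\cris\circ \Fro[\mathrm{cris}]^f$ and $\Fro[\mathrm{cris}]^f\circ *\cris$ give $\Fro[\mathrm{cris}]^{f+\nu}(x)\otimes *(\alpha)$. So part (2) is immediate.

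For part \eqref{lemma:Gamma_cris:1}, I compute both compositions on $x\otimes\alpha$:
\[
\sigma\cris(*\cris(x\otimes\alpha)) \;=\; \Fro[\mathrm{cris}]^{\nu(\sigma)+\nu}(x)\otimes (\sigma\circ*)(\alpha),
\]
\[
(\sigma\circ*)\cris(x\otimes\alpha) \;=\; \Fro[\mathrm{cris}]^{\nu(\sigma\circ*)}(x)\otimes (\sigma\circ*)(\alpha).
\]
Since $(\sigma\circ *)|_{\Fa}=\Fro^{\nu(\sigma)+\nu}$ and $\nu(\sigma\circ *)$ is by definition the representative of this exponent in $\{0,1,\dots,f-1\}$, the discrepancy between the two expressions is exactly the extra factor $\Fro[\mathrm{cris}]^{\nu(\sigma)+\nu-\nu(\sigma\circ*)}$ acting on $x$. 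This exponent is $0$ whenever $\nu(\sigma)+\nu<f$, and equals $f$ otherwise. Splitting on the two possible values of $\nu$: in the ramified case $\nu=0$ and $\nu(\sigma)<f$ always, so the two sides coincide; in the unramified case $\nu=f/2$, so the overflow occurs precisely when $\nu(\sigma)\geq f/2$, producing the correction $\Fro[\mathrm{cris}]^f$ announced in the statement.

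There is no real obstacle here; the only point that requires care is the case distinction on the restriction of $*$ to $\Fa$, which is the genuine arithmetic input, and the verification that $\sigma\cris$ and $*\cris$ are already well-defined as maps into $\BcrisFtilde$ (respectively $\BcrisF$), which follows from the matching of restrictions to $\Fa$ just noted. In particular, applying part \eqref{lemma:Gamma_cris:1} to $\sigma=*$ recovers the relations $*\cris^2=\Fro[\mathrm{cris}]^f$ or $*\cris^2=\id$ anticipated at the end of \ref{sb:BcrisF_more}.
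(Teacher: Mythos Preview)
Your proof is correct and follows essentially the same approach as the paper's: both reduce to the identity $\sigma\cris\circ *\cris = \Fro[\mathrm{cris}]^{\nu(\sigma)+\nu}\otimes(\sigma\circ *)$ and then split cases according to whether $\nu(\sigma)+\nu<f$. The only difference is presentational---you evaluate on pure tensors while the paper composes the maps $\Fro[\mathrm{cris}]^{\bullet}\otimes\bullet$ directly---and you add some welcome explicitness about why $*$ and $\sigma$ stabilize $\Fa$.
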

\begin{dem} 
The property \eqref{lemma:Gamma_cris:2} follows directly by definition:
$$*\cris\circ \Fro[\mathrm{cris}]^f = 
(\Fro[\mathrm{cris}]^{\nu}\otimes *) \circ (\Fro[\mathrm{cris}]^f \otimes \id[F])= 
\Fro[\mathrm{cris}]^{\nu+f}\otimes * =
(\Fro[\mathrm{cris}]^f \otimes \id[F]) \circ (\Fro[\mathrm{cris}]^{\nu}\otimes *) =
 \Fro[\mathrm{cris}]^f \circ *\cris. $$

Let us prove \eqref{lemma:Gamma_cris:1}. 
We have $$\sigma\cris\circ *\cris = (\Fro[\mathrm{cris}]^{\nu(\sigma)}\otimes_{\Fro^{\nu(\sigma)}} \sigma)\circ(\Fro[\mathrm{cris}]^{\nu}\otimes_{\Fro^{\nu}} *)= \Fro[\mathrm{cris}]^{\nu(\sigma)+\nu}\otimes_{\Fro^{\nu(\sigma)+\nu}} (\sigma\circ *).$$
If $F/F_0$ is ramified then $\nu=0$ and $\nu(\sigma\circ *)=\nu(\sigma)$, \emph{cf.}~\ref{sb:def_nu}, and the statement is clear. 
If $F/F_0$ is unramified then $\nu=f/2$: if $\nu(\sigma)<f/2$, the statement is also clear. 
Finally, if $F/F_0$ is unramified and $\nu(\sigma)\geq f/2$, we have $\nu(\sigma)+f/2=f+\nu(\sigma\circ*)$, so   
\begin{equation*}
\Fro[\mathrm{cris}]^{\nu(\sigma)+{f}/{2}}\otimes_{\Fro^{\nu(\sigma)+{f}/{2}}} (\sigma\circ *)=
\Fro[\mathrm{cris}]^{\nu(\sigma\circ*)+f}\otimes_{\Fro^{\nu(\sigma\circ*)+f}} (\sigma\circ *)=
%\Fro[\mathrm{cris}]^{\nu(\sigma\circ*)}\otimes_{\Fro^{\nu(\sigma\circ*)}} 
(\sigma\circ *)\cris\circ\Fro[\mathrm{cris}]^{f}.\qedhere
\end{equation*}
\end{dem}

\begin{conv} \label{conv:sigma_cris}
When there is no risk of confusion we will write abusively $\sigma$ (resp. $*$, resp. $\Fro$) instead of $\sigma\cris$ (resp. $*\cris$,
resp. $\Fro[\mathrm{cris}]$);
also, for any $\lambda\in\BcrisF$,
we will sometimes write  $\lambda^*$ (resp. $\lambda^{*\cris}$) instead of $*(\lambda)$ (resp. ${*\cris}(\lambda)$). 
\end{conv}

\begin{lemma}[Colmez]\label{lemma:discesa_F}
 We keep the notation of $\ref{sb:def_nu}$, $\ref{sb:BcrisF_more}$ and Convention $\ref{conv:sigma_cris}$. 
Let $\mu\in \BcrisF$ be an element such that:
\begin{enumerate} %[\emph{(}1\emph{)}]
	\item\label{lemma:discesa_F:1} for all $\sigma\in\Gamma$, $v\dR(\sigma(\mu))=0$;
	\item\label{lemma:discesa_F:2} $\Fro^f(\mu)=\mu$.
\end{enumerate} 
%where $\Fro^f$ and $\sigma(\mu)\in\BcrisFtilde\subset \Bdr$ 
%are defined in %$\ref{sb:BcrisF_more}$ and 
%Convention $\ref{conv:sigma_cris}$. 
Then $\mu$ belongs to $F^{\times}\subset\BcrisF$. 
\end{lemma}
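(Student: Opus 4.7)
The plan is to descend $\mu$ to an element of $F$ by a double Vandermonde argument, bootstrapping off the classical fact that $(\Bcris^+)^{\Fro^f = 1} = \Fa$.
First I would pick an $\Fa$-basis $\alpha_0 = 1, \alpha_1, \dots, \alpha_{e-1}$ of $F$ and write $\mu = \sum_{i=0}^{e-1} x_i \otimes \alpha_i$ with $x_i \in \Bcris$. Since the Frobenius on $\BcrisF$ is $\Fro^f \otimes \id[F]$, hypothesis \eqref{lemma:discesa_F:2} and the $\Fa$-linear independence of the $\alpha_i$'s immediately yield $\Fro^f(x_i) = x_i$ for every $i$. Next, I would exploit the $e$ embeddings $\sigma \in \Gamma$ with $\nu(\sigma) = 0$, i.e.\ those fixing $\Fa$, to localise each $x_i$ inside $\Bcris^+$: for such $\sigma$, formula \eqref{eq:sigma_on_BcrisF} simplifies to $\sigma\cris(\mu) = \sum_i x_i \cdot \sigma(\alpha_i) \in \BcrisFtilde \subset \Bdr$, which by hypothesis \eqref{lemma:discesa_F:1} lies in $\Bdr^+$. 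The $e\times e$ matrix $\bigl(\sigma(\alpha_i)\bigr)_{\sigma, i}$ has entries in $\tilde F$ and nonzero determinant (its square is the discriminant of the separable extension $F/\Fa$), so its inverse too has entries in $\tilde F \subset \Bdr^+$; solving the linear system yields $x_i \in \Bcris \cap \Bdr^+ = \Bcris^+$.

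The crucial step is then to invoke $(\Bcris^+)^{\Fro^f = 1} = \Fa$, a consequence of the fundamental exact sequence $0 \to \Qp \to \Bcris^{\Fro = 1} \to \Bdr/\Bdr^+ \to 0$, which in particular gives $\Bcris^{\Fro = 1} \cap \Bdr^+ = \Qp$. Concretely, for $x \in (\Bcris^+)^{\Fro^f = 1}$ and $\alpha \in \Fa$, the trace $T_\alpha(x) \coloneqq \sum_{k=0}^{f-1} \Fro^k(\alpha x)$ is Frobenius invariant and remains in $\Bcris^+$, hence belongs to $(\Bcris^+)^{\Fro = 1} = \Qp$; the non-degeneracy of $\mathrm{Tr}_{\Fa/\Qp}$ combined with a Vandermonde argument over the $\Qp$-embeddings of $\Fa$ then identifies $x$ with the unique element of $\Fa$ representing the $\Qp$-linear functional $\alpha \mapsto T_\alpha(x)$. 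Applied to each $x_i$, this yields $x_i \in \Fa$, so $\mu \in \Fa \otimes_\Fa F = F$; since $\vdR(\mu) = 0$ forces $\mu \neq 0$, we conclude $\mu \in F^\times$.

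The main technical input is the interplay between Frobenius and the de Rham filtration embodied in the fundamental exact sequence; granted this, everything else reduces to a double Vandermonde descent, first from $\BcrisF$ to $\Bcris$ along the $\Fa$-embeddings of $F$, and then from $\Bcris^{\Fro^f = 1} \cap \Bcris^+$ to $\Fa$ along the $\Qp$-embeddings of $\Fa$.
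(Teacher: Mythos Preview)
Your overall strategy is sound and, once corrected, gives a genuine alternative to the paper's argument (which invokes Colmez's sequence \cite[Lemma~9.25]{MR1956055} for $F$ directly, rather than reducing to the case $F=\Qp$). The reduction to the basic fundamental exact sequence via a double descent is elegant. However, there is a real gap.

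The equality $\Bcris \cap \Bdr^+ = \Bcris^+$ that you invoke is \emph{false}. The ring $\Bcris^+=\mathrm{A}\cris[1/p]$ is strictly smaller than $\Fil[0]\Bcris$: the kernel of $\theta\colon\Bcris^+\to\Cp$ is not the principal ideal $(t)$, essentially because $\mathrm{A}\cris$ is a $p$-adic completion of a divided-power envelope and contains limits such as $\sum_{n\ge 1}p^n\xi^{[p^n]}$ that lie in $\ker\theta$ but not in $\xi\,\Bcris^+$. This pathology is one of the reasons Colmez works with $B_{\max}$ in \cite{MR1956055}. Since you have only established $x_i\in\Bcris\cap\Bdr^+$, you cannot conclude $x_i\in\Bcris^+$, and then your trace $T_\alpha(x_i)=\sum_k\Fro^k(\alpha x_i)$ is not obviously in $\Bdr^+$: the Frobenius does \emph{not} preserve $\Fil[0]\Bcris$ (for instance $\xi=[p^\flat]-p$ has $\vdR(\xi)=1$ but $\vdR(\Fro(\xi))=0$).

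The fix is simple and stays entirely within your framework: do not discard the embeddings with $\nu(\sigma)\neq 0$. For each $j\in\{0,\dots,f-1\}$, the $e$ embeddings $\sigma$ with $\nu(\sigma)=j$ give, via \eqref{eq:sigma_on_BcrisF}, the relation $\sum_i\Fro^j(x_i)\,\sigma(\alpha_i)\in\Bdr^+$; the matrix $(\sigma(\alpha_i))$ is again invertible over $\tilde F$ (it is the Galois-twist by some $g\in\Gal{\tilde F}{\Qp}$ of the $j=0$ matrix), so you obtain $\Fro^j(x_i)\in\Bdr^+$ for \emph{every} $j$. Now each summand of $T_\alpha(x_i)$ lies in $\Bdr^+$, hence $T_\alpha(x_i)\in(\Bcris)^{\Fro=1}\cap\Bdr^+=\Qp$ by the fundamental exact sequence, and your Vandermonde argument over $\Fa/\Qp$ finishes the proof exactly as you wrote. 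In effect, using all of $\Gamma$ rather than only the $\Fa$-linear embeddings is what replaces the missing inclusion $\Fil[0]\Bcris\subset\Bcris^+$; this is also, morally, how Colmez's map $\Theta_F$ in the paper's proof packages the information from every $\sigma\in\Gamma$ at once.
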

\begin{dem} 
This follows from the fundamental exact sequence  \cite{MR1956055}*{Lemma 9.25 (SEF3E)}. 
We briefly recall the argument.   
%by definition $\Bcris\coloneqq\Bcris^+[\frac{1}{t}]$, where  $\Bcris^+\coloneqq\mathrm{A}\cris[\frac{1}{p}]$.
By construction $\Bcris$ is the localization $\mathrm{A}\cris[{t}^{-1}]$ of Fontaine's ring $\mathrm{A}\cris$, 
%of integral crystalline periods, 
where  
$t$ is the period of $\Qp(1)$, see Convention \ref{sb:Conv_pHT}\eqref{Bcris_3}.
 Thus 
there exists some integer $n\geq 0$, such that $\mu t^n \in \BcrisF^{+}\coloneqq\mathrm{A}\cris\otimes_{W(k_F)}F$.
By evaluating the sequence of \cite[Lemma~9.16]{MR1956055} in $\Cp$ and by multiplying with $t^{n-1}$ we get a short exact sequence 
of $\Qp$-vector spaces
\begin{equation}
\label{eq:seFondamentale}
0\rightarrow F\cdot t^{n} \rightarrow (B_F)^{\Fro^{f}=p^{nf}}\xrightarrow{\,\Theta_{F}\,} \bigoplus_{\Gamma}\Cp\rightarrow 0,
\end{equation}
where $B_F%\coloneqq\mathrm{B}_{\mathrm{max},F}^+
\subset \Bdr^+$ is some ring of periods containing  $\BcrisF^{+}$, {\itshape cf.} \cite{MR1956055}*{\S8.5}\footnote{The ring $B_F$ is the ring $\mathrm{B}_{\mathrm{max},F}^+$ in the notation of {\itshape loc.cit.}};
%$\Cp(n-1)\coloneqq\Cp\cdot t^{n-1}$; 
 and the map $\Theta_{F}$ is given by 
$$\Theta_{F}\colon x %\in (\mathrm{B}_{\mathrm{max},F}^+)^{\Fro^{f}=p^{nf}}
\longmapsto 
\Bigl(\theta\bigl(\sigma\bigl({x}{t^{-n+1}}\bigr)\bigr)\Bigr)_{\sigma\in\Gamma}\,,$$ where 
 $\theta\colon\Bdr^+\rightarrow\Cp$ denotes the reduction map and $\sigma\colon %\mathrm{B}_{\mathrm{max},F} 
B_F\rightarrow \Bdr^+$
 extends the homomorphism $\sigma\cris$ of \eqref{eq:sigma_on_BcrisF}.
By the hypothesis \eqref{lemma:discesa_F:2} of the statement, the element $x=\mu t^n$ belongs to $ (\BcrisF^+)^{\Fro^{f}=p^{nf}}$;
we get $\Theta_F(x)= (\theta(\sigma(\mu t)))_{\sigma\in\Gamma}$. 
By the hypothesis \eqref{lemma:discesa_F:1} of  the statement we have $\vdR(\sigma(\mu))=0$ and since 
 $\sigma{\cris}(t)= \Fro[\mathrm{cris}]^{\nu(\sigma)}(t)= p^{\nu(\sigma)}t$,
we get $\Theta_F(x)=0$.
 Hence $\mu\in F$;  
moreover, $\mu$ is nonzero otherwise the hypothesis \eqref{lemma:discesa_F:1} above would fail.
\end{dem}

\begin{definition} \label{def:abstract_fundamental_period}
An element $\beta$ in $\BcrisF$ is called a \emph{fundamental period} if it
satisfies the following  properties:
\begin{enumerate}[a)] %[a)]
\item\label{prop:4prop_lambda_1} $\vdR(\beta)=1$;
\item\label{prop:4prop_lambda_2} $\vdR(\beta^*)=-1$;
\item\label{prop:4prop_lambda_3} for all $\sigma\in\Gamma\backslash\{\iota,*\}$, $\vdR(\sigma(\beta))=0$;
\item\label{prop:4prop_lambda_4} $\Fro^f(\beta)=\beta$;
\end{enumerate}
where $\sigma(\beta)$, $\Fro^f$, 
and $\beta^*
\in\BcrisFtilde\subset \Bdr$ are defined in $\ref{sb:BcrisF_more}$ and Convention $\ref{conv:sigma_cris}$. 

We denote by $\mathcal{P}\subset \BcrisF$ the set of fundamental periods.
\end{definition}

\begin{prop}\label{prop:4prop_lambda} 
The fundamental period $\lambda\in\BcrisF^{\times}$ defined in \eqref{eq:the_fund_period}
%of Definition $\ref{fundamental period}$ 
is a fundamental period according to Definition $\ref{def:abstract_fundamental_period}$.
\end{prop}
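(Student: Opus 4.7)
The plan is to extract the four properties directly from the defining equation
\[v_1 = \lambda v_2\]
in the identification $V \otimes_{\Qp} \BcrisF = W \otimes_{\Qp} \BcrisF$ provided by the crystalline comparison, where $v_1 \in V \otimes_{\Qp} F$ and $v_2 \in W \otimes_{\Qp} F$ are the eigenvectors produced by Proposition \ref{prop decomp}. For property (d), I would simply apply $\Fro^f$ to this equation. Since $\Fro^f = \Fro[\mathrm{cris}]^f \otimes \id_F$ on $\BcrisF = \Bcris \otimes_{\Fa} F$, it fixes $F$ pointwise. On the crystalline side, the Frobenius of $V \otimes_{\Qp} \BcrisF$ acts as $\id_V \otimes \Fro^f$ because $V = D^{\Fro = \id}$ is Frobenius-fixed inside $D \otimes_{\Fatilde} \Bcris$, and by $\varphi$-equivariance of the comparison the Frobenius of $W \otimes_{\Qp} \BcrisF$ is $\id_W \otimes \Fro^f$. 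Both $v_1$ and $v_2$ are thus $\Fro^f$-fixed, and the identity $v_1 = \lambda v_2$ forces $\Fro^f(\lambda) = \lambda$.

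For (a), (b) and (c), I would prove the uniform assertion
\[\vdR(\sigma\cris(\lambda)) = n_\sigma \qquad \text{for every } \sigma \in \Gamma,\]
where the Hodge numbers $n_\sigma$ of the fundamental filtered-CM-space are specified by Definition \ref{fundamental module}. Applying $\id \otimes \sigma\cris$ to both sides of $v_1 = \lambda v_2$ and recalling that $\sigma\cris$ restricts on $F$ to the embedding $\sigma$, I obtain
\[\sigma(v_1) = \sigma\cris(\lambda) \cdot \sigma(v_2) \qquad \text{in } W \otimes_{\Qp} \BcrisFtilde.\]
It remains to compute the de Rham valuations of the three factors. The vector $\sigma(v_1)$ spans the eigenline $L_\sigma \subset D_{\tilde F}$, which by Lemma \ref{equiv CM module} lies in $\Fil[n_\sigma] D_{\tilde F}$ but not in $\Fil[n_\sigma + 1]$; transporting through the filtered crystalline comparison $D_{\tilde F} \otimes_{\tilde F} \Bdr \cong W \otimes_{\Qp} \Bdr$ yields $\vdR(\sigma(v_1)) = n_\sigma$. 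On the other hand $\sigma(v_2) \in W \otimes_{\Qp} \tilde F \subset W \otimes_{\Qp} \Bdr^+$ is nonzero modulo $W \otimes_{\Qp} \Fil[1]\Bdr$, so $\vdR(\sigma(v_2)) = 0$. Multiplicativity of $\vdR$ then gives $\vdR(\sigma\cris(\lambda)) = n_\sigma$; specializing to $\sigma = \iota$, $\sigma = *$, and the remaining $\sigma \in \Gamma \setminus\{\iota,*\}$ yields (a), (b), (c) respectively.

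The main obstacle is bookkeeping rather than substantive: one must verify that the crystalline comparison transports the convolution filtration on $D_{\tilde F} \otimes_{\tilde F} \Bdr$ (tensor product of $\Fil^\bullet D_{\tilde F}$ with $\Fil^\bullet \Bdr$) into the filtration $W \otimes_{\Qp} \Fil^\bullet \Bdr$ on the right, an identification which is built into the construction of $\Ddr$ attached to a crystalline representation, and one must keep straight which of the two $F$-actions on $V \otimes_{\Qp} F$ is being extended to $\BcrisF$ versus $\BcrisFtilde$ through the various $\sigma\cris$. Once these compatibilities are in hand the four properties follow essentially without further computation.
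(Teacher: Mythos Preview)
Your proposal is correct and follows essentially the same approach as the paper: apply $\sigma\cris$ to the defining relation $v_1 = \lambda v_2$, read off $\vdR(\sigma(\lambda))$ from the known valuations $\vdR(\sigma(v_1)) = n_\sigma$ and $\vdR(\sigma(v_2)) = 0$, and obtain property (d) by applying $\Fro^f$ and using that both $v_1$ and $v_2$ are Frobenius-fixed. The only cosmetic difference is that you package (a)--(c) uniformly as $\vdR(\sigma\cris(\lambda)) = n_\sigma$, whereas the paper writes out the three cases $\sigma = \iota$, $\sigma = *$, $\sigma \notin \{\iota,*\}$ separately.
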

\begin{dem} 
%Let us put ourselves in the setting of Definition \ref{fundamental period} and Proposition \ref{comp decomp}.
 In this proof let us write  $V$ for  the fundamental filtered-CM-space $F(\iota)$, {\itshape cf.} Definition $\ref{fundamental period}$.
 Let $D\coloneqq V\otimes_{\Qp}\Fatilde$ be the admissible filtered-$\varphi$-module corresponding to $F(\iota)$ by Lemma \ref{equiv CM module} and 
 Proposition \ref{prop admissibility}.
Let $W$ be the Galois representation associated with %the admissible filtered $\Fro$-module 
$D$, which comes with an identification of $\BcrisFtilde$-modules
$$ V \otimes_{\Qp} \BcrisFtilde =   W \otimes_{\Qp} \BcrisFtilde\,.$$
In this context, Proposition \ref{comp decomp} gives vectors  $v_1\in V_F\coloneqq V\otimes_{\Qp} F$
and $v_2\in W_F\coloneqq W\otimes_{\Qp} F$ and the relation  $v_1=\lambda v_2$.
%, where $W$ is the Galois representation associated with the admissible filtered $\Fro$-module 
%$D$.
By applying $\sigma\in\Gamma$, we get $\sigma(v_1)=\sigma(\lambda) \sigma(v_2)$.
%By applying $\sigma$ to the equality 
%%\begin{equation*} %\label{eq:prop:4prop_lambda}
%$v_1=\lambda v_2$, we get the relation  $\sigma(v_1)=\sigma(\lambda) \sigma(v_2)$ 
% in 
%$$ V \otimes_{\Qp} \BcrisFtilde =   W \otimes_{\Qp} \BcrisFtilde\,.$$
%\end{equation*}
Now, by construction we know that:
\begin{itemize}
	\item[-] for all $\sigma\in\Gamma$, $\vdR(\sigma(v_2))=0$; %$\sigma(v_2)\in V_F \subset \Fil[0](V_F\otimes_F\BcrisF)$;
	\item[-] $\vdR(v_1)=1$ and $\vdR(v_1^*)=-1$;   
	%$v_1\in \Fil[1](D_F)$, $v_1^*\in \Fil[-1](D_F)$;
	\item[-] for all 
$\sigma\in\Gamma\backslash\{\iota,*\}$, $\vdR(\sigma(v_1))=0$.
%$\sigma(v_1)\in \Fil[0](D_F)$;
\end{itemize}
 Therefore, we get:  
 \begin{itemize}
	\item[-] for $\sigma=\iota$,  $v_1=\lambda v_2$, so $\vdR(\lambda)=1$; 
	\item[-] for $\sigma=*$, $v_1^*=\lambda^* v_2^*$,  so $\vdR(\lambda^*)=-1$;
	\item[-] for $\sigma\in\Gamma\backslash\{\iota,*\}$, $\sigma(v_1)=\sigma(\lambda) \sigma(v_2)$,  so $\vdR(\sigma(\lambda))=0$.
\end{itemize}
Finally, by applying $\Fro[]^f$ to $v_1=\lambda v_2$,
%\eqref{eq:prop:4prop_lambda}, 
since by construction both $v_1$ and $v_2$ are fixed by $\Fro[]^f$, we get 
$$ v_1=\Fro[D]^f(v_1)=\Fro[]^f(\lambda) \Fro[]^f (v_2)=\Fro[]^f(\lambda) v_2,$$
hence $\Fro^f(\lambda)=\lambda$.
\end{dem}

\begin{prop}[Uniqueness of fundamental periods]\label{prop:unicity} 
Let $\lambda\in \BcrisF^{\times}$ be as in Proposition $\ref{prop:4prop_lambda}$ and $\mathcal{P}$ the set of fundamental periods of Definition $\ref{def:abstract_fundamental_period}$, then 
$$ \mathcal{P}=F^{\times}\cdot\lambda.$$  
In particular all fundamental periods are invertible.
\end{prop}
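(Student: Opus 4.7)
The plan is to show the nontrivial inclusion $\mathcal{P}\subseteq F^{\times}\cdot\lambda$ by forming the ratio $\mu\coloneqq\beta\lambda^{-1}\in\BcrisF$ for an arbitrary fundamental period $\beta\in\mathcal{P}$ and applying Lemma \ref{lemma:discesa_F} (Colmez's descent lemma). This makes sense because the specific $\lambda$ of Proposition \ref{prop:4prop_lambda} is invertible in $\BcrisF$ by construction (Definition \ref{fundamental period} and Proposition \ref{comp decomp}). The conclusion $\mu\in F^{\times}$ then gives $\beta=\mu\lambda\in F^{\times}\cdot\lambda$, and invertibility of every element of $\mathcal{P}$ follows for free from $F^{\times}\subset\BcrisF^{\times}$.

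To verify the two hypotheses of Lemma \ref{lemma:discesa_F}: for each $\sigma\in\Gamma$, the embedding $\sigma\cris\colon\BcrisF\rightarrow\BcrisFtilde$ is a ring homomorphism landing in $\Bdr$, so
\[\vdR(\sigma(\mu))=\vdR(\sigma(\beta))-\vdR(\sigma(\lambda)).\]
By Proposition \ref{prop:4prop_lambda} and the assumption that $\beta\in\mathcal{P}$, both $\beta$ and $\lambda$ exhibit the same list of $\vdR$-values under the embeddings $\sigma\cris$, namely $1$ at $\iota$, $-1$ at $*$, and $0$ elsewhere; each difference is therefore $0$. For the Frobenius condition, since both $\beta$ and $\lambda$ are fixed by $\Fro^{f}$, we have $\Fro^{f}(\mu)=\Fro^{f}(\beta)\Fro^{f}(\lambda)^{-1}=\mu$. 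Lemma \ref{lemma:discesa_F} then yields $\mu\in F^{\times}$.

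For the reverse inclusion $F^{\times}\cdot\lambda\subseteq\mathcal{P}$, given $c\in F^{\times}$ I would check that $c\lambda$ satisfies properties (a)--(d) of Definition \ref{def:abstract_fundamental_period}. Property (d) follows from the fact that, under the identification $\BcrisF=\Bcris\otimes_{\Fa}F$ of \ref{sb:BcrisF_more}, the element $c$ sits as $1\otimes c$ and $\Fro^{f}=\Fro[\mathrm{cris}]^{f}\otimes\id[F]$ acts as the identity there (because $\Fro^{f}$ fixes $\Fa$). For (a)--(c), for every $\sigma\in\Gamma$ the image $\sigma\cris(c)=\sigma(c)\in\tilde{F}^{\times}$ is a unit of $\Bdr$, hence has $\vdR$ equal to $0$; consequently $\vdR(\sigma(c\lambda))=\vdR(\sigma(\lambda))$ for every $\sigma$, and the required list of valuations is inherited from $\lambda$.

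There is no real obstacle here, since Lemma \ref{lemma:discesa_F} does all the heavy lifting: the four properties in Definition \ref{def:abstract_fundamental_period} were precisely designed so that the quotient of two fundamental periods satisfies the hypotheses of that lemma. The only care needed is in handling the semi-linear operators $\sigma\cris$ and $\Fro^{f}$ correctly on the tensor product $\Bcris\otimes_{\Fa}F$, which has already been settled in \ref{sb:BcrisF_more} and Convention \ref{conv:sigma_cris}.
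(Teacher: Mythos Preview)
Your proof is correct and follows essentially the same approach as the paper's own proof, which simply observes that $\lambda^{-1}\mathcal{P}=F^{\times}$ is precisely the content of Lemma~\ref{lemma:discesa_F}. You have merely unpacked this terse remark into its two inclusions and verified the hypotheses explicitly, which is faithful to the paper's argument.
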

\begin{dem} Since $\lambda$ is invertible in \BcrisF, we can consider the subset $\lambda^{-1}\mathcal{P}$ of \BcrisF. 
We have to prove $\lambda^{-1}\mathcal{P}=F^{\times}$, which is exactly the statement of Lemma \ref{lemma:discesa_F}.\end{dem}

\begin{prop}\label{cor:lambda_lambda*} 
Let $\lambda\in \BcrisF$ be a fundamental period.
Then 
$ \lambda \lambda^{*} \in F_0^{\times}.$
\end{prop}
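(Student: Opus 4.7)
The plan is to reduce the statement to the descent lemma \ref{lemma:discesa_F} applied to $\mu \coloneqq \lambda\lambda^*$. The two hypotheses of that lemma require that $\mu$ be fixed by $\Fro^f$ and that $\sigma(\mu)$ have de Rham valuation zero for every $\sigma\in\Gamma$; once these are verified, $\mu\in F^{\times}$, and it will only remain to check that it is fixed by $*$. Note that $\lambda\lambda^*$ is already known to be invertible in $\BcrisF$, since $\lambda$ is invertible by Proposition \ref{prop:unicity} and $*\cris$ is injective (both $\id$ and $\Fro^f$ are).

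The first hypothesis is immediate: $\Fro^f$ commutes with $*\cris$ by Lemma \ref{lemma:Gamma_cris}(\ref{lemma:Gamma_cris:2}), so property (\ref{prop:4prop_lambda_4}) of Definition \ref{def:abstract_fundamental_period} propagates from $\lambda$ to $\lambda^* = *\cris(\lambda)$, and then to their product by multiplicativity of $\Fro^f$.

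The second hypothesis is the main computation. For $\sigma\in\Gamma$, I would use Lemma \ref{lemma:Gamma_cris}(\ref{lemma:Gamma_cris:1}) to identify $\sigma\cris(\lambda^*)$ with $(\sigma\circ *)\cris(\lambda)$ --- up to a possible insertion of $\Fro^f$ in the unramified case with $\nu(\sigma)\geq f/2$, which is rendered innocuous by property (\ref{prop:4prop_lambda_4}). This yields
\[
\vdR(\sigma\cris(\lambda\lambda^*)) \;=\; \vdR(\sigma\cris(\lambda)) \;+\; \vdR((\sigma\circ *)\cris(\lambda)).
\]
I would then observe that $\sigma\mapsto\sigma\circ *$ is a fixed-point-free involution of $\Gamma$ that exchanges $\iota$ and $*$ and therefore preserves $\Gamma\setminus\{\iota,*\}$. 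Combining with properties (\ref{prop:4prop_lambda_1})--(\ref{prop:4prop_lambda_3}), on the orbit $\{\iota,*\}$ the two contributions sum to $1+(-1)=0$, and on every other orbit they sum to $0+0=0$.

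Having obtained $\lambda\lambda^*\in F^{\times}$ from Lemma \ref{lemma:discesa_F}, the final step is to compute
\[
*\cris(\lambda\lambda^*) \;=\; \lambda^*\cdot *\cris^2(\lambda)
\]
and to invoke the fact recalled in \ref{sb:BcrisF_more} that $*\cris^2$ is either $\id[\BcrisF]$ (when $F/F_0$ is ramified) or $\Fro^f$ (when $F/F_0$ is unramified); property (\ref{prop:4prop_lambda_4}) gives $*\cris^2(\lambda)=\lambda$ in both cases, yielding $*\cris(\lambda\lambda^*)=\lambda^*\lambda=\lambda\lambda^*$, i.e.\ $\lambda\lambda^*\in F_0^{\times}$. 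The only place where some care is required is the twisted extension of $*$ to $\BcrisF$ through $*\cris$, which is not always an involution of $\BcrisF$; but every stray $\Fro^f$ that arises via Lemma \ref{lemma:Gamma_cris} is completely neutralized by the Frobenius-fixedness of $\lambda$.
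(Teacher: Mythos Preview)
Your proposal is correct and follows essentially the same route as the paper's own proof: verify the two hypotheses of Lemma~\ref{lemma:discesa_F} for $\mu=\lambda\lambda^{*\cris}$ using Lemma~\ref{lemma:Gamma_cris} together with properties \ref{prop:4prop_lambda_1}--\ref{prop:4prop_lambda_4}, conclude $\mu\in F^{\times}$, and then check $*$-invariance via $*\cris^2\in\{\id,\Fro^f\}$. One tiny quibble: the reason $\lambda^{*\cris}$ is invertible is that $*\cris$ is a ring endomorphism of $\BcrisF$ (hence sends units to units), not merely that it is injective; the paper instead argues non-vanishing at the end via the integral-domain property, but either way the point is minor.
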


\begin{remark} \label{rem:not_using_quad_forms}
For the fundamental period $\lambda$ defined in \eqref{eq:the_fund_period}, the statement
of Proposition \ref{cor:lambda_lambda*} follows directly from Proposition~\ref{burrata}. Thanks to Proposition~\ref{prop:unicity}  the same is true for any fundamental period.
We give below an alternative proof based on the $p$-adic properties of the period %(Definition \ref{def:abstract_fundamental_period})
 which does not use quadratic forms.
\end{remark}

\begin{dem} In this proof in order to avoid any possible confusion we will write $\lambda^{*\cris}$ instead of $\lambda^{*}$.
First, let us show that the period $ \lambda \lambda^{*\cris} \in \BcrisF$ satisfies the hypothesis of Lemma \ref{lemma:discesa_F}.
For any $\sigma$ in $\Gamma$ we have 
$$ \sigma\cris(\lambda \lambda^{*\cris}) = \sigma\cris(\lambda) \cdot (\sigma\cris\circ{*\cris})(\lambda)  $$
and by Lemma \ref{lemma:Gamma_cris}\eqref{lemma:Gamma_cris:1}, $(\sigma\cris\circ{*\cris})(\lambda)$ is equal either to $(\sigma\circ{*})\cris(\lambda)$ or 
to $(\sigma\circ{*})\cris(\Fro[\mathrm{cris}]^f(\lambda))$. Since 
$\Fro[\mathrm{cris}]^f(\lambda)=\lambda$ by property \ref{prop:4prop_lambda_4} of Definition \ref{def:abstract_fundamental_period}, we get 
$$\sigma\cris(\lambda \lambda^{*\cris}) = \sigma\cris(\lambda)\cdot(\sigma\circ{*})\cris(\lambda).$$ 
Applying $\vdR$ and considering the properties  \ref{prop:4prop_lambda_1}, \ref{prop:4prop_lambda_2} and \ref{prop:4prop_lambda_3}
of Definition \ref{def:abstract_fundamental_period} for $\lambda$, we get that $\lambda \lambda^{*\cris}$ satisfies the property \eqref{lemma:discesa_F:1} of Lemma \ref{lemma:discesa_F}. Let us check  the property \eqref{lemma:discesa_F:2} of Lemma \ref{lemma:discesa_F}:
$$\Fro[\mathrm{cris}]^f(\lambda \lambda^{*\cris}) = \Fro[\mathrm{cris}]^f(\lambda) \Fro[\mathrm{cris}]^f(\lambda^{*\cris}) = \lambda \lambda^{*\cris},$$
because $\Fro[\mathrm{cris}]^f$ and $*\cris$ commute, \emph{cf.} Lemma \ref{lemma:Gamma_cris}\eqref{lemma:Gamma_cris:2}, and $\Fro[\mathrm{cris}]^f(\lambda)=\lambda$.
Therefore, by Lemma \ref{lemma:discesa_F}, the period $\lambda \lambda^{*\cris} \in \BcrisF$  belongs to $F^{\times}\subset\BcrisF$ . Applying $*$ we get 
$$  (\lambda \lambda^{*\cris})^* = \lambda^{*\cris} \lambda^{*\cris^2}= \lambda^{*\cris} \Fro[\mathrm{cris}]^{2\nu}(\lambda) = \lambda^{*\cris} \lambda,$$
where $\nu=0$ or $f/2$, \emph{cf.} \ref{sb:def_nu} and \ref{sb:BcrisF_more}; and we used again $\Fro[\mathrm{cris}]^{f}(\lambda)=\lambda$.
Being stable under $*$, the element $ \lambda \lambda^{*\cris}\in F^{\times}$ belongs actually to $F^{\times}_0$. 
\end{dem}

%%%%%%%%%%%%%%%%%%%%%%%%%%%%%%%%%%%%%%%%%%%%%%%%%%%%%%%%%%%%%%%%%%%%%%%%%%%%%%%%%%%%%%%%%%%%%%%%%%%%%%%%%%%%%%%%%%%%%%%%%%%%%%%%%%
\section{Lubin--Tate periods and local reciprocity} \label{sect:LT_and_reciprocity}

The goal of this section is the proof of Theorem \ref{th:not_a_norm}, which is given in \ref{proof:not_a_norm}.
Before that we need to relate the fundamental period $\lambda$ of \eqref{eq:the_fund_period} to the  Lubin--Tate periods, \emph{cf.} Proposition \ref{prop:explicit_lambda} and Corollary \ref{cor:explicit_lambda}. 

 We keep the notation of Section \ref{sect:p-adic_periods} and specifically we refer to $\ref{sb:def_nu}$, $\ref{sb:BcrisF_more}$ and Convention $\ref{conv:sigma_cris}$. 
In particular, recall that $F$ is a $p$-adic field endowed with non trivial involution $*$ and we denoted by $F_0$ the subfield of $F$ of points fixed by $*$, whereas we denoted by $\Fa$ the absolute unramified subfield of $F$. 

\begin{definition} \label{def:Lubin-Tate_period}
Let $\pi$  be a uniformizer of $F$. We say that $\alpha$ in $\BcrisF$ is a Lubin--Tate period (relative to $\pi\in F$) if: 
\begin{enumerate}
	\item \label{def:Lubin-Tate_period:1} 
	$v\dR(\alpha)=1$;
	\item \label{def:Lubin-Tate_period:2}
	for all $\sigma\in \Gamma\backslash\{ \iota\}$, $v\dR(\sigma(\alpha))=0$;
	\item \label{def:Lubin-Tate_period:3}
	$\Fro^f(\alpha)=\pi\alpha$;
\item \label{def:Lubin-Tate_period:4}
$\alpha$ is invertible in $\BcrisF$.
\end{enumerate}
\end{definition}

\begin{theorem}[Colmez]
There exist Lubin--Tate periods relative to any choice of a uniformizer  $\pi$ of $F$.
\end{theorem}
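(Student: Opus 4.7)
The plan is to realize $\alpha$ as a crystalline period of the Lubin--Tate formal group with parameter $\pi$, following Colmez \cite{MR1956055}; the alternative construction via filtered $\Fro$-modules is the subject of the appendix.

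Let $\mathcal{F}_\pi$ denote the one-dimensional Lubin--Tate formal group over $\mathcal{O}_F$ with parameter $\pi$, and let $V_\pi$ be its rational Tate module. Then $V_\pi$ is a one-dimensional $F$-vector space on which $G_F$ acts continuously through the Lubin--Tate character $\chi_\pi\colon G_F\to \mathcal{O}_F^{\times}$, and the $F$-action commutes with Galois. The first step is to recall, after Fontaine, that $V_\pi$ is a crystalline $\Qp$-representation of $G_F$ and to describe its associated filtered $\Fro$-module $D=\Dcris(V_\pi)$. By functoriality the $F$-action extends to $D$ and commutes with the $F_a$-semi-linear Frobenius $\Fro$. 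One shows, using the Dieudonné theory of $\mathcal{F}_\pi$, that, after tensoring up to $\tilde F$, the de Rham filtration on $D\otimes_{F_a}\tilde F$ is concentrated on the eigenline indexed by $\iota$ with a single jump at level $1$ (and is trivial on the eigenlines indexed by the remaining $\sigma\in\Gamma$), while on the diagonal eigenline in $D\otimes_{F_a}F$ one can normalize a generator $e$ so that $\Fro^f(e)=\pi\, e$.

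The second step is to invoke Fontaine's comparison isomorphism $V_\pi\otimes_{\Qp}\Bcris \simeq D\otimes_{F_a}\Bcris$, to base-change it to $\BcrisF$, and to pick the distinguished eigenvector $v\in V_\pi\otimes_{\Qp}F$ supplied by Proposition \ref{prop decomp} (the one on which the two $F$-actions coincide). The Lubin--Tate period is then defined as the unique $\alpha\in\BcrisF$ with $v=\alpha\cdot e$. Verifying the four properties of Definition \ref{def:Lubin-Tate_period} is formally parallel to the proof of Proposition \ref{prop:4prop_lambda}: applying each $\sigma\in\Gamma$ to the equality $v=\alpha e$ and using the prescribed filtration structure of $V_\pi$ yields \eqref{def:Lubin-Tate_period:1} and \eqref{def:Lubin-Tate_period:2}; applying $\Fro^f$ and using $\Fro^f(v)=v$ together with $\Fro^f(e)=\pi e$ yields \eqref{def:Lubin-Tate_period:3}; invertibility \eqref{def:Lubin-Tate_period:4} follows from the non-degeneracy of the comparison isomorphism, itself a consequence of the crystallinity of $V_\pi$.

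The main obstacle is the normalization carried out in the first step: one must pin down the generator $e$ so that $\Fro^f(e)=\pi e$ on the nose, rather than merely up to a unit in $\mathcal{O}_{F_a}^{\times}$, because this is what ties $\alpha$ to the specific choice of uniformizer $\pi$ in property \eqref{def:Lubin-Tate_period:3}. This normalization can either be extracted from the Lubin--Tate formal group by a direct computation on the invariant differential, or bypassed entirely by constructing the filtered $\Fro$-module $\Dcris(V_\pi)$ by hand, which is what the appendix achieves.
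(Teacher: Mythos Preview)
Your outline follows Colmez's original route (start from the Lubin--Tate formal group, pass to its Tate module $V_\pi$, and read off the period from the crystalline comparison), which is what the paper \emph{cites} but not what it actually argues. The paper gives no proof in the main text; its appendix runs in the opposite direction: write down a filtered $\Fro$-module $D_\pi$ by hand (Definitions \ref{sb:def_D_pi} and \ref{sb:def_D_pi_Filtration}), check weak admissibility directly from the Newton and Hodge polygons (Proposition \ref{prop:D_is_admissible}), and extract the period via the \emph{contravariant} functor $\Vcrisv(D_\pi)=\Hom{\text{fil-}\Fro}{D_\pi}{\Bcris}$ (Proposition \ref{prop:check_Lubin-Tate_period}). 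Only afterwards is $\Vcrisv(D_\pi)$ identified with the Lubin--Tate character (Proposition \ref{prop:LT_char}). The gain is that one never needs Dieudonn\'e theory or the crystallinity of $V_\pi$ as input; the admissibility check is pure linear algebra plus \cite{MR1779803}.

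There is also a sign slip in your description of $D=\Dcris(V_\pi)$. For the covariant $\Dcris$, the diagonal eigenline carries $\Fro^f(e)=\pi^{-1}e$ and sits in $\Fil[-1]\setminus\Fil[0]$, not $\Fro^f(e)=\pi e$ in $\Fil[1]$: already for $F=\Qp$, $\pi=p$, the module $\Dcris(\Qp(1))$ is generated by $\varepsilon\otimes t^{-1}$ with Frobenius $p^{-1}$ and Hodge--Tate weight $-1$. The Dieudonn\'e module of $\mathcal{F}_\pi$ does satisfy $\Fro^f=\pi$, but it computes $\Dcris(V_\pi^\vee)$, not $\Dcris(V_\pi)$ --- this is exactly the duality you allude to in your last paragraph but then do not apply. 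With your signs as written, the relation $v=\alpha e$ together with $\Fro^f(v)=v$ and $\Fro^f(e)=\pi e$ gives $\Fro^f(\alpha)=\pi^{-1}\alpha$ and $\vdR(\alpha)=-1$, i.e.\ you have produced $\alpha^{-1}$ rather than $\alpha$. The fix is either to correct the signs, or (cleaner, and what the paper does) to use the contravariant functor from the outset, so that $\pi$ rather than $\pi^{-1}$ appears naturally.
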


\begin{remark}
\begin{enumerate}
	\item Lubin--Tate periods are constructed in \cite[\S9.3-\S9.5]{MR1956055}\footnote{Precisely, {\itshape cf. loc.cit.} 
Proposition 9.10 and Lemma 9.18. Note
that these periods are constructed in $B_{\mathrm{max},F}$, but actually they belong to $\BcrisF$, since, by Definition 
\ref{def:Lubin-Tate_period}\eqref{def:Lubin-Tate_period:3} above, they belong to $\Fro^f(B_{\mathrm{max},F})\subset \BcrisF$.},
 via a direct computation in periods rings using  Lubin--Tate formal groups \cite{MR172878}, 
see also \citelist{\cite{MR1247996}*{\S2} \cite{fourquaux_these}*{\S3.6}}. %%% 
Therefore the action of $G_F$ on $\alpha\in\BcrisF$ is given precisely by multiplication with the Lubin--Tate character. %$G_F\rightarrow \mca{O}_F^{\times}$. 
We will not need this property, so we did not mention it in Definition \ref{def:Lubin-Tate_period} above.
	\item To keep this article more self-contained,  we recall in Appendix \ref{sect:LubinTate} another construction of Lubin--Tate periods, relying on the theorem 
%``weakly admissible filtered $(\Fro,F)$-modules are admissible'', 
``weakly admissible $\Rightarrow$ admissible'', 
\emph{cf.} \cite{MR1779803}, applied  to a well-chosen %example 
filtered $\Fro$-module over $F$, see Definition \ref{sb:def_D_pi_Filtration}.   
\end{enumerate}
\end{remark}

\subsection{}\label{sb:Fa_nr_cappuccio}
 Let us denote by $\widehat{F}\nr$ (resp. $\widehat{F}_a\nr$) the completion of maximal unramified extension of $F$ (resp. $\Fa$)
and by $\Gal{\widehat{F}\nr}{F}$ the group of continuous $F$-automorphisms of $\widehat{F}\nr$.
 We have  $\widehat{F}\nr = \widehat{F}_a\nr \otimes_{\Fa} F$, that we embed canonically in $\BcrisF$. 
By definition, the endomorphism 
$\Fro[\mathrm{cris}]^f$ of \BcrisF, restricted to $\widehat{F}\nr$, is the arithmetic Frobenius $\Fro^f$ in $\Gal{\widehat{F}\nr}{F}$,
\emph{i.e.} the unique isomorphism of $\widehat{F}\nr$ lifting the map $x\mapsto x^{q}$, $q=p^f$, on the residue field.

\begin{prop}[Explicit construction of the fundamental period]\label{prop:explicit_lambda}
Assume that $\alpha\in\BcrisF$ is a Lubin--Tate period, for some uniformizer $\pi\in F$.  Then there exists $c\in (\widehat{F}\nr)^{\times}$ such that
$c\frac{\alpha}{\alpha^*}$ is a fundamental period, see Definition $\ref{def:abstract_fundamental_period}$.
Moreover $c$ can be chosen as follow:
% 2 casi
\begin{itemize}
	\item[-]  if $F/F_0$ is unramified, and $\pi$ belongs to $F_0$, then $c=1$; 
	\item[-] if $F/F_0$ is (totally) ramified, then $c$ satisfies
\begin{equation} %\setcounter{equation}{2}
\label{eq:Cocycle_wild}
\Fro^f(c) = \frac{\pi^*}{\pi} c,
\end{equation}   
where $\Fro^f\in\Gal{\widehat{F}\nr}{F}$ is the arithmetic Frobenius, see $\ref{sb:Fa_nr_cappuccio}$ above.
	\end{itemize}
\end{prop}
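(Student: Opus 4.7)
The plan is to look for $\beta\coloneqq c\cdot \alpha/\alpha^*$ with $c\in (\widehat{F}\nr)^{\times}$ to be determined, and then to verify the four defining properties of a fundamental period (Definition \ref{def:abstract_fundamental_period}). The key observation is that $\widehat F\nr$ and all its images under the maps $\sigma\cris$ embed in $\Bdr^+$ as subfields, so every unit has $\vdR=0$; this forces properties (a), (b), (c) to hold for any choice of $c$, and the cocycle equation on $c$ will appear only in property (d).

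For (a), Definition \ref{def:Lubin-Tate_period} gives $\vdR(\alpha)=1$ and $\vdR(\alpha^*)=0$ (applying \eqref{def:Lubin-Tate_period:2} to $\sigma=*\neq\iota$), whence $\vdR(\beta)=1$. For (c), fix $\sigma\in\Gamma\setminus\{\iota,*\}$: then $\vdR(\sigma(\alpha))=0$, and Lemma \ref{lemma:Gamma_cris}\eqref{lemma:Gamma_cris:1} rewrites $\sigma(\alpha^*)$ as either $(\sigma\circ *)(\alpha)$ or $(\sigma\circ *)(\Fro^f(\alpha))=(\sigma\circ *)(\pi)\cdot(\sigma\circ *)(\alpha)$; both have $\vdR=0$, since $\sigma\circ *\neq\iota$ and $\pi\in F^{\times}$ is a unit in $\Bdr^+$. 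For (b), apply Lemma \ref{lemma:Gamma_cris}\eqref{lemma:Gamma_cris:1} with $\sigma=*$: the square $(*\cris)^2$ equals $\id$ if $F/F_0$ is ramified and $\Fro^f$ (sending $\alpha$ to $\pi\alpha$) if $F/F_0$ is unramified; in both cases $\vdR((*\cris)^2(\alpha))=\vdR(\alpha)=1$, hence $\vdR(\beta^*)=-1$.

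For (d), Lemma \ref{lemma:Gamma_cris}\eqref{lemma:Gamma_cris:2} and Definition \ref{def:Lubin-Tate_period}\eqref{def:Lubin-Tate_period:3} give
\[
\Fro^f(\alpha^*)=*\cris(\Fro^f(\alpha))=*\cris(\pi\alpha)=\pi^*\alpha^*,
\]
hence $\Fro^f(\beta)=\Fro^f(c)\cdot(\pi/\pi^*)\cdot(\alpha/\alpha^*)$. The relation $\Fro^f(\beta)=\beta$ is therefore equivalent to the cocycle equation $\Fro^f(c)=(\pi^*/\pi)\cdot c$, which is precisely \eqref{eq:Cocycle_wild}. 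In the unramified case one may choose $\pi\in F_0$ (a uniformizer of $F_0$ is automatically a uniformizer of $F$), so $\pi^*/\pi=1$ and $c=1$ works.

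The main obstacle is producing $c$ in the ramified case: here $\pi\notin F_0$ (an $F_0$-uniformizer would have $F$-valuation two) and $u\coloneqq\pi^*/\pi$ is a non-trivial element of $\mathcal{O}_F^{\times}$. I would establish existence as a Hilbert~90-type statement: the multiplicative map $c\mapsto \Fro^f(c)/c$ on $\mathcal{O}_{\widehat F\nr}^{\times}$ hits every element of $\mathcal{O}_F^{\times}$. The argument proceeds by Teichm\"uller lift followed by successive approximation. First, pick $x\in\overline{\mathbb F}_p^{\times}$ with $x^{q-1}=\bar u$ ($q=|k_F|$), possible since $\overline{\mathbb F}_p$ is algebraically closed; its Teichm\"uller lift $c_0=[x]\in \mathcal{O}_{\widehat F\nr}^{\times}$ satisfies $\Fro^f(c_0)/c_0\equiv u\pmod{\mathfrak m_F}$. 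Then iteratively, given a partial product $c_n$ with $\Fro^f(c_n)/c_n\equiv u\pmod{\mathfrak m_F^{n+1}}$, write $u/(\Fro^f(c_n)/c_n)=1+\pi^{n+1}y$ for some $y\in\mathcal O_F$, pick $z\in\overline{\mathbb F}_p$ with $z^q-z=\bar y$ (the Artin-Schreier map is surjective on $\overline{\mathbb F}_p$), lift it to $\tilde z\in\mathcal O_{\widehat F\nr}$, and set $c_{n+1}\coloneqq c_n(1+\pi^{n+1}\tilde z)$. The infinite product converges $\pi$-adically in the complete ring $\mathcal O_{\widehat F\nr}$ to a $c\in\mathcal O_{\widehat F\nr}^{\times}$ solving \eqref{eq:Cocycle_wild}, completing the proof.
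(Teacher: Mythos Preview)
Your proof is correct and follows essentially the same route as the paper's: verify properties (a)--(c) for $\alpha/\alpha^*$ using the fact that units of $\widehat{F}\nr$ (and their $\sigma\cris$-images) have $\vdR=0$, then reduce (d) to the cocycle equation $\Fro^f(c)=(\pi^*/\pi)c$. The paper simply cites \cite[Chap.~V, \S2, Lemma~(2.1)]{MR1697859} for the existence of $c$, whereas you spell out the standard Teichm\"uller-plus-Artin--Schreier successive approximation; one small slip there: in the inductive step the element $y$ lies in $\mathcal{O}_{\widehat{F}\nr}$, not $\mathcal{O}_F$, since $\Fro^f(c_n)/c_n$ need not be in $F$ --- but the Artin--Schreier argument goes through unchanged with $\bar y\in\overline{\mathbb{F}}_p$.
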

\begin{dem}
The Lubin--Tate period $\alpha$ is invertible in $\BcrisF$, so is ${*\cris}(\alpha)$. Consider the period 
$$\frac{\alpha}{\alpha^*} \coloneqq {\alpha}\cdot{{*\cris}(\alpha)}^{-1} \in \BcrisF.$$
First let us show that $\frac{\alpha}{\alpha^*}$ 
satisfies the properties \ref{prop:4prop_lambda_1}, \ref{prop:4prop_lambda_2} and
\ref{prop:4prop_lambda_3} of Definition \ref{def:abstract_fundamental_period}; then, we will multiply it by a well chosen constant $c$ in $\widehat{F}\nr$ in order to get the property \ref{prop:4prop_lambda_4} of \ref{def:abstract_fundamental_period}. 
Since, for every $\sigma$ in $\Gamma$, $\sigma\cris(\widehat{F}\nr)=\widehat{F}\nr\subset \BcrisFtilde\cap \Bdr^+$, normalizing the period by a nonzero constant 
in $\widehat{F}\nr$ will not change the first three properties.  

As $\alpha$ is a Lubin--Tate period, by using  Definition \ref{def:Lubin-Tate_period}, we get  the following.
\begin{enumerate}[a)]
\item 
$$\vdR\bigl(\tfrac{\alpha}{\alpha^*}\bigr)= \vdR(\alpha) - \vdR(*\cris({\alpha})) = 1-0=1.$$
%by Definition \ref{def:Lubin-Tate_period}, \eqref{def:Lubin-Tate_period:1} and \eqref{def:Lubin-Tate_period:2} ;
\item
$$\vdR\bigl(*\cris\bigl(\tfrac{\alpha}{\alpha^*}\bigr)\bigr)= \vdR(*\cris({\alpha})) - \vdR(*\cris^2({\alpha})) %= 0- \vdR(*\cris^2({\alpha}))
=- \vdR(*\cris^2({\alpha})).$$
Since $*\cris^2$ is either $\id[\BcrisF]$ or $\Fro[\mathrm{cris}]^f$, {\itshape cf.} \ref{sb:BcrisF_more}, and $\Fro[\mathrm{cris}]^f(\alpha)=\pi\alpha$, we get
 $- \vdR(*\cris^2({\alpha}))=-1$.
\item For every $\sigma$  in $\Gamma\backslash\{\iota,*\}$,
$$ \vdR\bigl(\sigma\cris\bigl(\tfrac{\alpha}{\alpha^*}\bigr)\bigr) = \vdR(\sigma\cris({\alpha})) - \vdR(\sigma\cris(*\cris({\alpha})))=
- \vdR(\sigma\cris(*\cris({\alpha}))),$$
as $\sigma\not = \iota$. 
By using Lemma \ref{lemma:Gamma_cris}\eqref{lemma:Gamma_cris:1}, 
%$\sigma\cris\circ *\cris$ 
the period $\sigma\cris(*\cris({\alpha}))$ is either 
$(\sigma\circ *)\cris({\alpha})$ or $(\sigma\circ *)\cris(\Fro[\mathrm{cris}]^f({\alpha}))=(\sigma\circ *)\cris(\pi\alpha)=
\sigma(\pi^*)(\sigma\circ *)\cris(\alpha)$.  As $\sigma(\pi^*)\in \tilde{F}\subset\BcrisFtilde\cap\Bdr^+$ and  $\sigma\circ * \not = \iota$, %(for $\sigma\not = *$), 
 we have, in both cases,
 $\vdR(\sigma\cris(*\cris({\alpha})))=0$ by Definition \ref{def:Lubin-Tate_period}\eqref{def:Lubin-Tate_period:3}. 
%\item
\end{enumerate}
Finally we need to check the property \ref{prop:4prop_lambda_4} of Definition $\ref{def:abstract_fundamental_period}$.
As ${\pi^*}$ and ${\pi}$ have the same valuation (one), we have ${\pi^*}{\pi}^{-1}\in\mca{O}_F^{\times}$. In particular the equation 
%\eqref{eq:Cocycle_wild}
$$\frac{\Fro^f(c)}{c} = \frac{\pi^*}{\pi}$$
has a solution $c\in (\widehat{F}\nr)^{\times}$, see for example \cite[Chap.~V, \S2, Lemma~(2.1)]{MR1697859}.   
Note that if $F/F_0$ is unramified and $\pi$ is in $F_0$, we can take $c=1$.
We finish by computing 
$$ \Fro[\mathrm{cris}]^f\left(c\frac{\alpha}{\alpha^{*\cris}}\right) =
\Fro^f(c)\frac{\Fro[\mathrm{cris}]^f(\alpha)}{\Fro[\mathrm{cris}]^f(*\cris(\alpha))}=
c\frac{\pi^*}{\pi}\frac{\Fro[\mathrm{cris}]^f(\alpha)}{*\cris(\Fro[\mathrm{cris}]^f(\alpha))} =
c\frac{\pi^*}{\pi}\frac{\pi \alpha}{(\pi\alpha)^{*\cris}} = c\frac{\alpha}{\alpha^{*\cris}},  
$$	
where we used $\Fro[\mathrm{cris}]^f(\alpha)=\pi\alpha$ and Lemma \ref{lemma:Gamma_cris}\eqref{lemma:Gamma_cris:2}.
\end{dem}

\begin{remark} \label{rk:transcendente}
If $F/F_0$ is tamely ramified, we can choose a uniformizer $\pi\in F$, such that $\pi^*=-\pi$ and 
the constant $c$ can be chosen in a quadratic unramified extension of $F$, such that $\Fro^f(c) = -c$.
In general, the constant $c$ is algebraic over $F$ if and only if its orbit under $\Gal{\widehat{F}\nr}{F}$ %the Galois group 
is finite. Since $\Fro^f$ is a topological generator, the relation   \eqref{eq:Cocycle_wild} implies that $c$ is algebraic if and only if 
$\pi^*\pi^{-1}$ is a root of unity. 
When the break of the ramification filtration of $\Gal{F}{F_0}$ is large, it may happen that for any choice of a uniformizer $\pi$ in $F$, 
$\pi^*\pi^{-1}$ is never a root of unity ({\itshape e.g.} $F=\QQ_2(\zeta_8)$ and $F_0=\QQ_2(\zeta_4)$, where $\zeta_8$ is a primitive 8th-root of unity and $\zeta_8^2=\zeta_4$).
%We will not use this remark in the following.
\end{remark}

\begin{cor}\label{cor:explicit_lambda}  
Let $\lambda$ be a fundamental period in $\BcrisF$,
then
$$\lambda \in F^{\times}\cdot c\frac{\alpha}{\alpha^*},$$ with $c$ and $\alpha$ as in Proposition $\ref{prop:explicit_lambda}$.  
\end{cor}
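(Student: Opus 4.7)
The corollary is essentially a one-line consequence of combining two results already established in Section \ref{sect:p-adic_periods}, so my plan is simply to identify the two ingredients and conclude.

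First, I would invoke Proposition \ref{prop:explicit_lambda}: for the given choice of uniformizer $\pi \in F$, a Lubin--Tate period $\alpha \in \BcrisF$ relative to $\pi$, and the constant $c \in (\widehat{F}\nr)^{\times}$ produced there, the element $c\frac{\alpha}{\alpha^*}$ satisfies all four axioms \ref{prop:4prop_lambda_1}--\ref{prop:4prop_lambda_4} of Definition \ref{def:abstract_fundamental_period}, i.e.\ it lies in the set $\mathcal{P}$ of fundamental periods. The subtle point is \ref{prop:4prop_lambda_4}, which is exactly the reason for the cocycle equation \eqref{eq:Cocycle_wild} defining $c$; everything else about $\frac{\alpha}{\alpha^*}$ is immediate from the valuation axioms of $\alpha$ together with Lemma \ref{lemma:Gamma_cris}. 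I treat this step as already done.

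Next, I would apply Proposition \ref{prop:unicity}, which identifies the set $\mathcal{P}$ with the single $F^{\times}$-orbit $F^{\times}\cdot \lambda_0$, where $\lambda_0$ is the fundamental period from Definition \ref{fundamental period}. Hence any two elements of $\mathcal{P}$ differ by a multiplicative scalar in $F^{\times}$. Applied to the given $\lambda \in \mathcal{P}$ and to $c\frac{\alpha}{\alpha^*}\in \mathcal{P}$, this yields at once
\[
\lambda \in F^{\times}\cdot c\frac{\alpha}{\alpha^*},
\]
as claimed. Since there is no real obstacle here—all the genuine work was already absorbed into the descent Lemma \ref{lemma:discesa_F} (used to prove Proposition \ref{prop:unicity}) and the explicit verification in Proposition \ref{prop:explicit_lambda}—the proof reduces to citing these two propositions in sequence.
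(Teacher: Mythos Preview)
Your proof is correct and essentially identical to the paper's: the paper also just cites Propositions \ref{prop:unicity} and \ref{prop:explicit_lambda}, adding only the remark that $c\neq 0$ and $\frac{\alpha}{\alpha^*}$ is invertible in $\BcrisF$ so that the expression makes sense.
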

\begin{dem} 
This is an immediate consequence of Propositions~$\ref{prop:unicity}$ and $\ref{prop:explicit_lambda}$.  \end{dem}

  %\begin{dem}
\subsection{Proof of Theorem \ref{th:not_a_norm}}\label{proof:not_a_norm}
Let $\lambda\in\BcrisF^{\times}$ be the fundamental period defined in $\eqref{eq:the_fund_period}$.
%Let $\lambda$ be the fundamental period.
By Proposition \ref{prop:4prop_lambda} it is a fundamental period  in the sense of Definition \ref{def:abstract_fundamental_period}.
By Proposition \ref{cor:lambda_lambda*} (or by Proposition \ref{burrata}), the period $ \lambda\lambda^{*}$ belongs to $F_0^{\times}$.
By Remark \ref{rem:not_a_notm}\eqref{rem:not_a_notm2} it is enough to prove that $ \lambda\lambda^*$ does not belong to $ N_{F/F_0}(F^\times)$, for $*=*\cris$, the endomorphism of \BcrisF\ defined in \ref{sb:BcrisF_more}.

By Corollary \ref{cor:explicit_lambda}, we have $\lambda=bc\tfrac{\alpha}{\alpha^*}$, for some $b\in F^{\times}$, a Lubin--Tate period $\alpha\in\BcrisF^{\times}$ and $c\in\widehat{F}\nr$ as in Proposition $\ref{prop:explicit_lambda}$.
Considering that $bb^*=N_{F/F_0}(b)\in F_0^{\times}$ is a norm we can suppose  $b=1$.
Following Proposition \ref{prop:explicit_lambda}, we need to treat separately  two cases, depending on whether the extension $F/F_0$ is unramified or ramified.
\begin{enumerate}
\item Assume that the extension $F/F_0$ is unramified. We have $*\cris^2=\Fro^f$, \emph{cf.} \ref{sb:BcrisF_more}. 
Choose $\pi$ in $F_0$ so that we can have $c=1$, \emph{cf.} \ref{prop:explicit_lambda}. We get 
$$\lambda\lambda^*
=\frac{\alpha}{(\alpha^*)}\frac{\alpha^*}{(\alpha^*)^*} 
= \frac{\alpha}{(\alpha^*)^*} =
\frac{\alpha}{\Fro^f(\alpha)}=\frac{\alpha}{\pi\alpha}=\frac{1}{\pi}.$$ 
Since $F/F_0$ is unramified every norm has even valuation in $F_0$, hence $\lambda\lambda^*$ cannot be a norm. 
\item 
Assume that the extension $F/F_0$ is (totally) ramified.  
In particular, the endomorphism $*\cris$ of \BcrisF\ is an involution, \emph{cf.} \ref{sb:BcrisF_more}.
We get
$$\lambda\lambda^*
= c\frac{\alpha}{(\alpha^*)}c^*\frac{\alpha^*}{(\alpha^*)^*} 
= cc^*= N_{\widehat{F}\nr/\widehat{F}_0\nr}(c),$$ 
where the last equality follows given that the restriction of $*\cris$ to $\widehat{F}\nr=\widehat{F}_a\nr\otimes_{\Fa}F\subset \BcrisF$ is the unique non trivial element in $\Gal{\widehat{F}\nr}{\widehat{F}_0\nr}$. 
Let $$(-,F/F_0)\colon F_0^\times \rightarrow \Gal{F}{F_0}$$ be the local reciprocity map. 
The equation $\Fro^f(c) c^{-1}= \pi^* \pi^{-1} $ from \eqref{eq:Cocycle_wild} fits exactly the hypothesis of
a theorem of Dwork, \emph{cf.}~\cite[Ch.~XIII, \S5, Cor. of Th.~2]{MR0354618}; 
whence we get
\begin{equation*}
\label{eq:Dwork_th}
\bigl(N_{\widehat{F}\nr/\widehat{F}_0\nr}(c),F/F_0\bigr) = *^{-1} = *.
\end{equation*} 
Since $*\not = \id[F]$, that exactly means that $N_{\widehat{F}\nr/\widehat{F}_0\nr}(c)\in F_0^{\times}$ is not a norm, \emph{i.e.} it does not belong to $N_{F/F_0}(F^\times)$, and that finishes the proof. {\hfill$\square$}
\end{enumerate}

\begin{remark} 
For $*=*\cris$, the proof above, as all the statements of this section,  does not use that $\lambda$ 
is coming from  a CM-quadratic space, \emph{cf.} Definition~\ref{def CM quad form}, but only the
properties of Definition \ref{def:abstract_fundamental_period}. (See also Remark \ref{rem:not_using_quad_forms}). 
\end{remark}

%%%%%%%%%%%%%%%%%%%%%%%%%%%%%%%%%%%%%%%%%%%%%%%%%%%%%%%%%%%%%%%%%%%%%%%%%%%%%%%%%%%%%%%%%%%%%%%%%%%%%%%%%%%%%%%%%%%%%%%%%%% 
\section{Proof of the main theorem}\label{sect:proof_MT}

We now put all the ingredients together and prove Theorem \ref{mainthm}.
\subsection*{Proof of Theorem \ref{mainthm}} %\begin{dem}[Proof of Theorem \ref{mainthm}]
By Proposition \ref{padicreduction}, the result is reduced to a $p$-adic statement. In particular, following Proposition \ref{motive admissibility}, it is enough to study the $p$-adic étale realization and the Frobenius invariant part of the crystalline realization of the motive $M$. By this same proposition, this pair of quadratic spaces is an orthogonal supersingular pair, in the language of Definition \ref{def supersingular}. We can reformulate Proposition \ref{padicreduction} using    Definition \ref{definition good}: we have to show that our orthogonal superisingular pair is good (see also Remark \ref{qualcosa}).

 These two quadratic spaces are endowed with the action of $F \otimes_{\QQ} \Qp$, where $F$ is the number field given by hypothesis.
By Proposition \ref{prop simple object} we can use the action of $F \otimes_{\QQ} \Qp$ to decompose the two quadratic spaces and study them separately. More precisely it is enough to study the case where the two quadratic spaces are endowed with an action of a $p$-adic field (which is a factor of $F \otimes_{\QQ} \Qp$) and is stable by the involution  $*$ induced by $F$. By abuse of notation we will denote such a $p$-adic field again by $F$.

We have now an orthogonal supersingular pair endowed with an action of a $p$-adic field. Both quadratic space are now CM-quadratic spaces with respect to $F$ in the sense of Definition \ref{def CM quad form}. We can now apply Proposition \ref{comp decomp}  to our two quadratic spaces and to the ring $B=\BcrisF$. This constructs a period $\lambda \in B^{\times}$, well defined up to multiplication by an element of 
$F^{\times}$.

By Proposition \ref{burrata} applied to the endomorphism  $*=*\cris$ defined in \ref{sb:BcrisF_more},   the two orthogonal spaces are isomorphic if and only if $\lambda  \cdot  *(\lambda)$ lies in the group of norms $N_{F/F_0}(F^{\times}).$ 
By Corollary \ref{cime di rapa}, this norm condition is equivalent to the fact that the minimum of the Hodge polygon of the underlying filtered $\varphi$-module is an even number (see also Remark \ref{cozze}). Altogether we have  that the two orthogonal spaces are isomorphic if and only if the minimum of the Hodge polygon is even. By Lemma \ref{buona notte} this means precisely that the pair is good. {\hfill$\square$}

%\end{dem}

\begin{remark}
Note that Corollary \ref{cime di rapa} is a consequence of Theorem \ref{th:not_a_norm} whose proof is the object of Sections \ref{sect:p-adic_periods} and \ref{sect:LT_and_reciprocity}.
\end{remark}

%%%%%%%%%%%%%%%%%%%%%%%%%%%%%%%%%%%%%%%%%%%%%%%%%%%%%%%%%%%%%%%%%%%%%%%%%%%%%%%%%%%%%%%%%%%%%%%%%%%%%%%%%%%%%%%%%%%%%%%%%%%
\appendix
\section{Lubin--Tate filtered $\Fro$-modules}\label{sect:LubinTate}

Lubin--Tate periods were constructed by Colmez in \cite{MR1956055}, via a direct construction based on Lubin--Tate's formal group law \cite{MR172878}.
The goal of this appendix is to present to the reader an alternative construction of these periods as self-contained  as possible. 
Beyond \cite{MR1956055},  
there is a vast literature 
for the $p$-adic representations associated with Lubin--Tate groups and their periods, {\itshape cf.} for example \cites{MR1247996,fourquaux_these,MR2565906,MR4068300}. %fontaine_ouyang}.  

\smallskip
Let $\pi$ be a uniformizer of a $p$-adic field $F$.  The plan is the following : 
\begin{enumerate}
	\item Describe concretely a filtered $\Fro$-module $D=(D,\Fro[\pi],\Fil[\bullet](D_F))$ over $F$ using only semi-linear algebra data, see Definitions \ref{sb:def_D_pi} and \ref{sb:def_D_pi_Filtration}.
	\item Show that $D$ is weakly-admissible, see Proposition \ref{prop:D_is_admissible}; hence admissible by \cite{MR1779803}.
	\item Show that a Lubin--Tate period as in Definition \ref{def:Lubin-Tate_period} appears as period $\alpha=\alpha_{\pi}$  of $D$,
	see \eqref{eq:def_period_D_pi} and Proposition \ref{prop:check_Lubin-Tate_period}.
	\item Relate $\alpha$ to Colmez' original construction, hence $D$ to the Galois representation given by the Lubin--Tate character, see 
	Proposition \ref{prop:LT_char}.
\end{enumerate}

In this appendix we do not assume anymore that $F$ is endowed with a non trivial involution.

\smallskip
%\subsection{}
%\label{sb:def_D_pi_prelim}
Keep the notation of Conventions \ref{sb:Conv_p-adic_fields} and \ref{sb:Conv_pHT}. In particular recall that we denoted by $\Fa$ the absolute unramified subfield of $F$, which has degree $f$ over $\Qp$.
Let $E_\pi(x)\in\Fa[x]$ be the minimal (monic) polynomial of $\pi$ over $\Fa$, which is an Eisenstein polynomial of degree $e$.

%Let us denote by $M_n(\Fa)$ the algebra of $n\times n$ matrices with coefficients in $\Fa$. 

\begin{definition}[The $\Fro$-module]\label{sb:def_D_pi} 
%Keep the notation of \ref{sb:def_D_pi_prelim}.
Let us define a filtered $\Fro$-module: 
 we set 
$$ D\coloneqq {(\Fa)}^{ef},$$ as $\Fa$-vector space and we endow it 
 with the semi-linear Frobenius $\Fro[\pi]\colon D\rightarrow D$ given, in the canonical basis $\mathscr{C}\coloneqq(\mathfrak{e}_1,\ldots,\mathfrak{e}_{ef})$, by the %cyclic 
bloc matrix %of size $ef\times ef$
% Grande matrice
%\begin{equation}
%\label{eq:sb:FROB_D_pi}
%A= \left(\begin{array}{c|c|c|c|c} 
%\bigcirc & \bigcirc& \cdots & \bigcirc & C \\
%\hline
%I_e & \bigcirc & \cdots & \bigcirc & \bigcirc \\
%\hline
 %\bigcirc & I_e & \bigcirc & \vdots & \vdots \\
%\hline 
 %\vdots & \bigcirc & \ddots & \bigcirc  & \vdots \\
%\hline
 %\bigcirc & \cdots & \bigcirc & I_e  & \bigcirc 
%\end{array}\right)\in M_{ef}(\Fa),
%\end{equation}
%where 
   %$C$ is the companion matrix of $E_{\pi}(x)$, $I_e$  is the identity  $e\times e$ matrix, 
	%and $\bigcirc$ stands for a zero bloc.
%\begin{equation}
%\label{eq:sb:FROB_D_pi}
%A= \left(\begin{array}{c|c} 
%\bigcirc  & C \\
%\hline
%I_{e(f-1)} & \bigcirc \end{array}\right), %\in \mathrm{M}_{ef}(\Fa),
%\end{equation}
%where $I_{e(f-1)}$  is the identity matrix of size $e(f-1)$,
     %$C$ is the companion matrix of $E_{\pi}(x)$, which is a square matrix of size $e$, 
	%and $\bigcirc$ stands for  the zero (rectangular) blocs.
\begin{equation}
\label{eq:sb:FROB_D_pi}
A\coloneqq \left(\begin{array}{c|c} 
0  & C \\
\hline
I_{e(f-1)} & 0 \end{array}\right) \in \mathrm{M}_{ef}(\Fa),
\end{equation}
where $I_{e(f-1)}\in M_{e(f-1)}(\Fa)$  is the identity matrix, $0$ stands for  the zero (rectangular) blocs, and 
     $C \in M_e(\Fa)$ is the companion matrix of the minimal polynomial $E_{\pi}(x)$ of $\pi$.  
\end{definition}

\begin{lemma}\label{lemma:F-action}
\begin{enumerate}
	\item \label{lemma:F-action_1} The $\Fro$-module $(D,\Fro[\pi])$ is endowed with an $F$-action, {\itshape i.e.} a linear action of $F$ on
	the $\Fa$-vector space $D$ which commutes with $\Fro[\pi]$.
	\item \label{lemma:F-action_2}
	The action of $\pi\in F$ is equal to the action of $\Fro[\pi]^f$ on $D$.
\end{enumerate}
\end{lemma}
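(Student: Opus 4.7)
The plan is to build the $F$-action block-by-block. Decompose $D = \bigoplus_{i=1}^{f} D_i$ with $D_i \simeq \Fa^e$ corresponding to the partition of the basis $\mathscr{C}$ into $f$ consecutive groups of $e$ vectors. Under this decomposition, the matrix $A$ of \eqref{eq:sb:FROB_D_pi} exhibits $\Fro[\pi]$ as the $\Fa$-semi-linear cyclic map
$$(v_1,\ldots,v_f) \longmapsto \bigl(C\,\Fro(v_f),\, \Fro(v_1),\ldots,\Fro(v_{f-1})\bigr),$$
which sends $D_i \to D_{i+1}$ by the componentwise Frobenius for $i<f$, and $D_f \to D_1$ by the twisted map $v\mapsto C\,\Fro(v)$.

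The first step drives part (2): iterating the above cycle $f$ times on a vector of $D_i$, and using that $\Fro^f$ is the identity on $\Fa$, shows that $\Fro[\pi]^f$ is $\Fa$-linear and block-diagonal, acting on $D_i$ by multiplication by $\Fro^{i-1}(C) \in M_e(\Fa)$; equivalently, $\Fro^{i-1}(C)$ is the companion matrix of the Frobenius-twisted polynomial $\Fro^{i-1}(E_\pi)(x) \in \Fa[x]$.

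To produce the $F$-action, on each block $D_i$ I define the $\Qp$-algebra map $\rho_i\colon F \to \End{\Fa}{D_i} = M_e(\Fa)$ by $\rho_i(\pi):=\Fro^{i-1}(C)$ and $\rho_i(a):=\Fro^{i-1}(a)\,I_e$ for $a\in\Fa$. The identity $\Fro^{i-1}(E_\pi)(\Fro^{i-1}(C))=0$, which holds for any companion matrix, ensures that $\rho_i$ factors through $\Fa[x]/(E_\pi) = F$. Summing, $\rho:=\bigoplus_i \rho_i\colon F \to \End{\Fa}{D}$ is the desired $F$-module structure. It remains to check $\rho(\alpha)\circ\Fro[\pi] = \Fro[\pi]\circ\rho(\alpha)$, which reduces to the two generators $\alpha=\pi$ and $\alpha\in\Fa$; both cases follow from a direct computation on each $D_i$ using $\Fro(Mv)=\Fro(M)\Fro(v)$ for matrices with entries in $\Fa$, the wrap-around from $D_f$ to $D_1$ being handled by $\Fro^f=\mathrm{id}$ on $\Fa$. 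Part (2) is then immediate, since on each $D_i$ both $\rho(\pi)$ and $\Fro[\pi]^f$ act by multiplication by $\Fro^{i-1}(C)$.

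The conceptual obstacle is that one cannot simply declare ``$\pi$ acts as $\Fro[\pi]^f$ and $\Fa\subset F$ acts by the original scalar multiplication on $D$''. With that recipe the identity $E_\pi(\Fro[\pi]^f)=0$ would force the block-diagonal matrix $\mathrm{diag}(C,\Fro(C),\ldots,\Fro^{f-1}(C))$ to be annihilated by the single polynomial $E_\pi\in\Fa[x]$, which fails as soon as $E_\pi\notin\Qp[x]$. The correct $F$-module structure is therefore transverse to the given $\Fa$-structure on $D$: via $\rho$ the subfield $\Fa\subset F$ acts on $D_i$ through the automorphism $\Fro^{i-1}$, and this Frobenius twist is precisely the correction needed for the companion relations to hold blockwise.
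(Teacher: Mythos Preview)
Your proof is correct and follows essentially the same route as the paper: both construct the $F$-action via block-diagonal matrices $\mathrm{Diag}(M,\Fro(M),\ldots,\Fro^{f-1}(M))$ with $M\in\Fa[C]\cong F$, and both identify $\Fro[\pi]^f$ with the block-diagonal matrix $\mathrm{Diag}(C,\Fro(C),\ldots,\Fro^{f-1}(C))$. The only stylistic difference is that the paper first characterises all block-diagonal-twisted maps commuting with $\Fro[\pi]$ (using that the centraliser of the cyclic matrix $C$ is $\Fa[C]$) and then reads off the $F$-action, whereas you build $\rho$ directly and verify commutation on the generators $\pi$ and $\Fa$; your order also swaps the two parts, computing $\Fro[\pi]^f$ first and using it to motivate the definition of $\rho$.
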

\begin{proof}
Let us define an $F$-action on the $\Fro$-module $D$. For any square matrix $M\in M_e(\Fa)$, we denote by $\Fro[\Fa](M)$ the matrix on which the Frobenius $\Fro[\Fa]$ is applied to all entries.
We then define the diagonal bloc matrix
\begin{equation} \label{eq:F-action}
\widetilde{M}\coloneqq \mathrm{Diag}(M, \Fro[\Fa](M),\ldots ,\Fro[\Fa]^{f-1}(M) ) \in \mathrm{M}_{ef}(\Fa),
\end{equation}
where the diagonal blocs are the matrices $\Fro[\Fa]^i (M)$, for $i$ going from $0$ to $f-1$.
The $\Fa$-linear map 
$D\rightarrow D$ given by  $\widetilde{M}$ in the canonical basis $\mathscr{C}$ of $D$ commutes with the (semi-linear) Frobenius $\Fro[\pi]$ if and only if $$ \widetilde{M} A = A \Fro[\Fa](\widetilde{M}),$$
by Definition \ref{sb:def_D_pi}.
It is elementary to check that this holds exactly when $M$ commutes with the matrix $C$ from \eqref{eq:sb:FROB_D_pi}.
%, the companion matrix of $E_{\pi}(x)$.
Since $C$ is cyclic, its commutator in $M_e(\Fa)$ is precisely $\Fa[C]\iso F$. This gives an $\Fa$-linear $F$-action on the $\Fro$-module
$D$ and proves the statement \eqref{lemma:F-action_1} above.

We can now prove \eqref{lemma:F-action_2}. First, note that by construction the action $\pi\in F$ is given by the matrix
$\widetilde{C}$  from \eqref{eq:F-action},
since $C$ is the companion matrix of the minimal polynomial of $\pi$. 

Now, since $\Fro[\pi]$ is semilinear, its $f$ power $\Fro[\pi]^f$ is $\Fa$-linear and it is given in the canonical basis by the 
(twisted) product 
\begin{equation}
A \cdot\Fro[\Fa](A) \cdots  \Fro[\Fa]^{f-1}(A).
\label{eq:twisted-prod}
\end{equation} 
 An elementary computation shows that the product \eqref{eq:twisted-prod} is equal to the diagonal bloc matrix 
\begin{equation}
\label{eq:sb:f_power_FROB_D_pi}
\mathrm{Diag}(C, \Fro[\Fa] (C),\ldots ,\Fro[\Fa]^{f-1} (C) ) \in \mathrm{M}_{ef}(\Fa),
\end{equation}
which is the matrix $\widetilde{C}$ from \eqref{eq:F-action}. As remarked above this shows exactly that $\Fro[\pi]^f$ acts as $\pi$.
%
% 
%%\begin{equation} matrice con i \bigcirc
%%\label{eq:sb:f_power_FROB_D_pi}
 %%%\prod_{i=0}^{f-1} \Fro[\Fa]^i (A)=
 %%\left(\begin{array}{c|c|c|c} 
%%C & \bigcirc& \cdots &  \bigcirc\\
%%\hline
%%\bigcirc & \Fro[\Fa] (C) & \bigcirc  & \bigcirc \\
%%\hline
 %%\bigcirc & \bigcirc & \ddots  & \bigcirc \\
%%\hline 
 %%%\vdots & \bigcirc & \ddots & \bigcirc  & \vdots \\
%%%\hline
 %%\bigcirc & \cdots & \bigcirc  & \Fro[\Fa]^{f-1} (C) 
%%\end{array}\right),
%%\end{equation}
%%
%%\begin{equation} matrice con gli zeri
%%\label{eq:sb:f_power_FROB_D_pi}
 %%%\prod_{i=0}^{f-1} \Fro[\Fa]^i (A)=
 %%\left(\begin{array}{c|c|c|c} 
%%C & 0 & \cdots &  0\\
%%\hline
%%0 & \Fro[\Fa] (C) & 0 & 0 \\
%%\hline
 %%0 & 0 & \ddots  & 0 \\
%%\hline 
 %%%\vdots & 0 & \ddots & 0  & \vdots \\
%%%\hline
 %%0 & \cdots & 0  & \Fro[\Fa]^{f-1} (C) 
%%\end{array}\right),
%%\end{equation}
%%%
\end{proof}

\begin{remark}\label{ciliegia} 
	By construction we have two field actions on $D$: the (left) $F$-action defined in Lemma $\mathrm{\ref{lemma:F-action}}\eqref{lemma:F-action_1}$,  which does commute with the Frobenius; and another one
	of $\Fa$ via its original structure of $\Fa$-vector space, which does not commute in general with the Frobenius.
	 To distinguish them we denote the latter as a right action as we did in Proposition 
	$\ref{prop decomp}$. Those actions clearly coincide on $\Qp\subset F$, hence $D$ is an   $F\otimes_{\Qp} \Fa$-module.
\end{remark}

\begin{lemma}\label{rem:libero}
\begin{enumerate}
%\item\label{rem:libero_1}  We have two field actions on $D$: the (left) $F$-action defined in Lemma $\mathrm{\ref{lemma:F-action}}\eqref{lemma:F-action_1}$,  which does commute with the Frobenius; and another one
%of $\Fa$ via its original structure of $\Fa$-vector space, which does not commute in general with the Frobenius.
% To distinguish them we denote the latter as a right action as we did in Proposition 
%$\ref{prop decomp}$. 
\item\label{rem:libero_2} The $F\otimes_{\Qp} \Fa$-module $D$ is free of rank one.
%Those actions coincide on $\Qp\subset F$, and $D$ is a free  $F\otimes_{\Qp} \Fa$-module of rank one.
\item\label{rem:libero_3}
Set $$D_F\coloneqq D\otimes_{\Fa}F.$$ It is a free $F\otimes_{\Qp}F$-module of rank one.
In particular there is a unique $F$-line 
\begin{equation*} %\label{eq:def_W}
W \subset D_F, 
\end{equation*}
where the right 
$F$-structure of $D_F$ coincide with the left one.
\end{enumerate}
\end{lemma}
\begin{proof}
%The first statement is true by construction and it is also clear that the two actions coincide on $\Qp\subset\Fa$, thus $D$ is $F\otimes_{\Qp} \Fa$-module.
The statement \eqref{rem:libero_3} follows directly from \eqref{rem:libero_2}. (In particular, the uniqueness of the line $W$ comes from the fact that $D_F$ has rank one). 

Let us show that $D$ is free of rank one. 
Indeed the ring $R\coloneqq F\otimes_{\Qp}\Fa$ is a product of fields, so  
the module $D$ splits as a product of vector spaces on those fields. %$D_{\mathfrak{m}}$ over $R_{\mathfrak{m}}$.
We claim that those vector spaces have all the same dimension,  thus $D$ is free as $R$-module by \cite[AC.II, \S5.3, Proposition 5, p.113]{MR1727221}; furthermore, since $[F:\Qp]=\dim_{\Fa}(D)$, the rank of $D$ must be one.
Let us prove the claim. It follows from the fact that $D$ is endowed with a Frobenius: if we twist the action of $\Fa$ on $D$ by the absolute Frobenius of $\Fa$, then the Frobenius $\Fro[\pi]$ of $D$ induces  
 an $R$-linear isomorphism $\Phi_{\pi}\colon \Fro^*D\coloneqq D \otimes_{\Fro} \Fa\rightarrow D$, mapping $m\otimes a$ to $\Fro[\pi](m)\cdot a$.
Since $F\supset \Fa$ and $\Fa/\Qp$ is a Galois extension with Galois group generated by $\Fro$, we see that the ring $R$ splits exactly as a product of $f$  fields (all abstractly isomorphic to $F$); thus the Frobenius $\id[F]\otimes\Fro$ acts transitively on $\Spec(R)$ as an $f$-cycle.
Therefore the claim follows from the existence of the isomorphism  $\Phi_{\pi}$. 
% 
%that the dimension of all those vectors spaces is the same.
%For example a base of this module is given by $\sum_{i=0}^{f-1}\mathfrak{e}_{1+ie}$.
%
\end{proof}

\begin{definition}[The filtered $\Fro$-module]\label{sb:def_D_pi_Filtration} 
Let us define a filtration on $D_F = D\otimes_{\Fa}F.$
We set 

$$ \Fil (D_F) \coloneqq\begin{cases} D_F & \text{ if }i\leq 0; \\
                                                W   & \text{ if }i=1; \\
                                              0 & \text{ if }i\geq 2;\end{cases}$$
where the $F$-line $W$ is constructed in Lemma \ref{rem:libero}\eqref{rem:libero_3}. 
The datum $$D_{\pi}\coloneqq (D,\Fro[\pi], \Fil[\bullet] (D_F))$$ altogether with the $F$-action, defined in  
%Lemmas 
\ref{lemma:F-action}--\ref{rem:libero},
% and Definition \ref{sb:def_D_pi}, 
form a filtered $\Fro$-module $D_{\pi}$ over $F$ with $F$-coefficients, {\itshape cf.} \cite{MR1944572}\footnote{In their definition, set  $N\coloneqq 0$  as monodromy operator.}, that 
  we call  \emph{the Lubin--Tate filtered $\Fro$-module associated with} 
$\pi\in F$. We may denote it $D$ for brevity.
\end{definition}

\begin{prop} \label{prop:D_is_admissible} 
 The Lubin--Tate filtered $\Fro$-module $D$ is admissible.  Its Newton and Hodge polygon are given as in the picture below.
$$     % FIGURA LUNGA da compilare
\setlength{\unitlength}{1mm}
\begin{picture}(45,23)
\linethickness{0.15mm}
\put(0,5){\line(1,0){45}}
\put(5,0){\line(0,1){20}}
\put(4,15){\line(1,0){36}}
\put(25,4){\line(0,1){14}}
\put(35,4){\line(0,1){14}}
\thicklines
\put(5,5){\line(1,0){20}}
\put(5,5){\line(3,1){30}}
\put(25,5){\line(1,1){10}}
\put(5,5){\circle*{1}}
\put(25,5){\circle*{1}}
\put(35,15){\circle*{1}}
%\put(35,5){\circle*{1}}
%\put(5,15){\circle*{1}}
\put(2,1){$0$}
\put(2,15){$1$}
\put(20,1){$ef-1$}
\put(35,1){$ef$}
\thinlines
\linethickness{0.075mm}
\put(18,18){\vector(0,-1){8}}
\put(13,19){\text{\footnotesize{Newton}}}
\put(39,10){\vector(-1,0){8}}
\put(40,9){\text{\footnotesize{Hodge}}}
\end{picture}
$$
\end{prop}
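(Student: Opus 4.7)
\subsection*{Proof plan for Proposition \ref{prop:D_is_admissible}}

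The plan is to verify Fontaine's criterion of weak admissibility for $D$; admissibility then follows from the theorem ``weakly admissible $\Rightarrow$ admissible'' of \cite{MR1779803}. First I compute the two numerical invariants. Since the only non-trivial graded piece of $\Fil[\bullet] D_F$ is $\grad^1 D_F = W$ of $F$-rank one, one gets $t_H(D)=1$ immediately. For the Newton number $t_N(D)$ -- the $p$-adic valuation of $\det \Fro[\pi]$ in any $\Fa$-basis, which is well-defined since $\Fro$ preserves $v_p$ -- a block-row permutation applied to the matrix $A$ of \eqref{eq:sb:FROB_D_pi} yields $\det A=\pm\det C$; as $C$ is the companion matrix of the Eisenstein polynomial $E_\pi(x)\in\Fa[x]$, we get $\det C=\pm E_\pi(0)=\pm N_{F/\Fa}(\pi)$, which has $p$-adic valuation one. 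Hence $t_H(D)=t_N(D)=1$.

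Next I identify the Newton polygon. By Lemma \ref{lemma:F-action}\eqref{lemma:F-action_2} the $\Fa$-linear endomorphism $\Fro[\pi]^f$ coincides with multiplication by $\pi\in F$ via the left $F$-action, and by Lemma \ref{rem:libero}\eqref{rem:libero_2}, $D$ is free of rank $f$ over $F$. Consequently the characteristic polynomial of $\Fro[\pi]^f$ on the $\Fa$-vector space $D$ is $E_\pi(x)^f$, whose roots all have $p$-adic valuation $1/e$. Thus every slope of $\Fro[\pi]^f$ equals $1/e$, and every slope of $\Fro[\pi]$ equals $1/(ef)$: the Newton polygon is the straight segment from $(0,0)$ to $(ef,1)$, and combined with the description of the filtration this produces the diagram in the statement.

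The heart of the argument is weak admissibility for subobjects, namely the claim that $D$ admits no proper non-zero $\Fro[\pi]$-stable $\Fa$-subspace. To prove it, I extend scalars along $\Fa\hookrightarrow\widehat{\QQ}_p\nr$ with Frobenius $\Fro[\pi]\otimes\Fro$, where $\Fro$ is the absolute Frobenius of $\widehat{\QQ}_p\nr$ (Convention \ref{sb:Conv_p-adic_fields}). The resulting isocrystal $D\otimes_{\Fa}\widehat{\QQ}_p\nr$ is isoclinic of slope $1/(ef)$ and rank $ef$; as $\gcd(1,ef)=1$, Dieudonn\'e--Manin identifies it with the simple isocrystal $\mathbb{D}_{1/(ef)}$ over $\widehat{\QQ}_p\nr$, which has no proper non-zero sub-isocrystal. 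For any $\Fro[\pi]$-stable $\Fa$-subspace $D'\subset D$, the base change $D'\otimes\widehat{\QQ}_p\nr\subset D\otimes\widehat{\QQ}_p\nr$ must then be $0$ or the whole module, so $D'=0$ or $D'=D$ by comparison of dimensions. The inequality $t_H(D')\leq t_N(D')$ is trivial for $D'=0$ and is an equality for $D'=D$ by the first paragraph, whence weak admissibility.

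The delicate point is the isoclinicity of the Newton polygon, but the identity $\Fro[\pi]^f=\pi$ pins it down cleanly; once isoclinicity is in hand, Dieudonn\'e--Manin collapses weak admissibility to the two evident cases $D'=0$ and $D'=D$.
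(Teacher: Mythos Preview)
Your proof is correct and follows essentially the same route as the paper's: compute the Hodge and Newton polygons, observe that the isocrystal is isoclinic of slope $1/(ef)$ and hence simple (the paper phrases this as ``the Newton polygon meets no interior lattice points'' rather than invoking Dieudonn\'e--Manin by name, and does not separately compute $t_N$ via $\det A$), so weak admissibility reduces to the trivial cases $D'=0$ and $D'=D$, and \cite{MR1779803} finishes. One small imprecision: your claim that the characteristic polynomial of $\Fro[\pi]^f$ on the $\Fa$-vector space $D$ is $E_\pi(x)^f$ implicitly computes with the \emph{left} $\Fa$-structure coming from $\Fa\subset F$, whereas the defining $\Fa$-structure on $D=(\Fa)^{ef}$ is the right one, for which the paper's explicit matrix \eqref{eq:sb:f_power_FROB_D_pi} gives characteristic polynomial $\prod_{i=0}^{f-1}\Fro[\Fa]^i(E_\pi(x))$ instead---but since the absolute Frobenius preserves $p$-adic valuations, all roots have valuation $1/e$ in either case and your slope conclusion is unaffected.
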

\begin{dem} By the definition of the filtration, the Hodge--Tate slopes are: the slope zero with multiplicity $ef-1$ and the slope one with multiplicity one. 
Let us compute the Newton slopes. The slope of the Frobenius $\Fro[\pi]$ are equal to the slopes of $\Fro[\pi]^f$ divided by $f$,
{\itshape cf.} \cite[2.1.3]{MR0498577}.
Since $\Fro[\pi]^f$ is an $\Fa$-linear endomorphism of $D$, its slopes are given by the $p$-adic valuation of the eigenvalues of $\Fro[\pi]^f$, where the $p$-adic valuation $v$ is normalized by $v(\Fa^{\times})=\ZZ$, {\itshape cf. loc.cit.}.
The eigenvalues of the matrix \eqref{eq:sb:f_power_FROB_D_pi} are the eigenvalues of the diagonal bloc matrices  $\Fro[\Fa]^i(C)$.
The matrix $\Fro[\Fa]^i(C)$ is the companion matrix of the polynomial $\Fro[\Fa]^i(E_{\pi}(x))\in\Fa[x]$ and its eigenvalues are the roots 
of it. For any $i$, this is an Eisenstein polynomial of degree $e$; whence,
by using the theory of the Newton polygon (for polynomials with coefficients on local fields, {\itshape cf.} \cite[Ch.~I, \S6, Theorem 6.1]{MR1274045}), we get that all the roots of  $\Fro[\Fa]^i(E_{\pi}(x))$ have the same valuation $1/e$.   
% CASO i=0 Since $C$ is the companion matrix of $E_{\pi}(x)$, which is irreducible, all its roots must have the same valuation equal to the valuation of $\pi$ which is $1/e$. 
Therefore,  the Frobenius $\Fro[\pi]$ has only one slope, equal to $1/ef$, with multiplicity $ef$, and the $\Fro$-module $D$ must be absolutely irreducible: indeed
its Newton polygon does not meet points with integral coordinates, except for the vertexes $(0,0)$ and $(ef,1)$, see the picture above.
Finally, the Newton polygon of $D$ has the same height as the Hodge polygon and lies above it; by irreducibility there are no nontrivial submodules; thus $D$ satisfies the condition of weakly admissibility. Therefore it is admissible by \cite{MR1779803}.   
\end{dem}

\begin{remark}
The $\Fro$-module $D$ is actually an $\Fa$-form of the irreducible 
$\Fro$-module of slope $1/ef$ over $\widehat{F}\nr$, in the Dieudonne--Manin classification, {\itshape cf.} for example \cite[Theorem 2]{MR0498577}. Here 
$\widehat{F}\nr$ denotes the completion of maximal unramified extension of $F$.
Its endomorphisms as $\Fro$-module form the central division algebra over $\Qp$ with invariant $1/{ef}$, {\itshape cf.}
\cite[Chapitre VI, \S3, Lemma 3.3.2.2]{MR0338002}; 
whereas its endomorphisms as \emph{filtered} $\Fro$-module gives the maximal abelian subfield $F$ of this algebra.
\end{remark}

\subsection{} Before attaching a period to $D$, let us  recall  the contravariant Fontaine formalism, %{\itshape cf.} 
see \cite[5.3.7]{MR1293972}.
Set $$ \Vcrisv(D)\coloneqq \Hom{\text{fil-}\Fro\text{-mod}}{D}{\Bcris}, $$
where the morphisms are taken in the category of filtered $\Fro$-modules over $F$, {\itshape cf. loc.~cit.}~4.3.3.
The \Qp-vector space $\Vcrisv(D)$  is a crystalline representation of $G_F$  which is 
 of dimension $ef$ over $\Qp$, since  $D$ is admissible. Moreover, it is endowed with an action of $F$, thus it is an $F$-vector space necessarily of dimension one.

% NON SERVE
%We have an isomorphism of $\Bcris$-modules
%$$ \Vcrisv(D) \otimes_{\Qp}\Bcris  \iso \Hom{\Fa}{D}{\Bcris}
%=D^{\vee}\otimes_{\Fa}\Bcris,$$
%compatible with all structure in play; %(Frobenii, Galois actions, filtration after exention of scalars),
%here $D^{\vee}\coloneqq\Hom{\Fa}{D}{\Fa}$ denotes the dual of $D$ in the category of filtered $\Fro$-modules over $F$, {\itshape cf. loc.~cit.~}4.3.3.
 
\subsection{Definition of the period} \label{def_period_D_pi}
Keep previous notation; %from Definitions \ref{sb:def_D_pi} and \ref{sb:def_D_pi_Filtration}; 
moreover,
we use the notation of \ref{sb:def_nu} and \ref{sb:BcrisF_more}, excepted that we are not assuming anymore that $F$ is endowed with a non trivial involution. 

Let us  attach a period $\alpha_{\pi}$ to $D=D_{\pi}$.
Any morphism $x\in \Vcrisv(D)$ can be seen as the element $$x_F\coloneqq x\otimes \iota \in D^{\vee}\dR\coloneqq\Hom{F}{D_{F}}{\Bdr},$$ where $\iota\colon F\hookrightarrow \Bdr$ is the embedding fixed in \ref{sb:def_nu}.
It is clear by construction that for every $x\in\Vcrisv(D)$, we have $x_F(D_F)\subset \BcrisF\cap\Bdr^+$.
Let us choose a basis  $w$ of the line $W=\Fil[1](D_F)$ over $F$ and a basis  
$x\in\Vcrisv(D)$ over $F$. We define a period 
\begin{equation}
\label{eq:def_period_D_pi}
\alpha_{\pi}\coloneqq x_F(w) \in \BcrisF\cap\Fil[1]\Bdr. % x\otimes \iota
\end{equation}
It does not depend on the choices of $w$ and $x$ up to a constant in $F^{\times}$ and we call it the period associated with 
the filtered $\Fro$-module $D_{\pi}$. We will see in Proposition \ref{prop:check_Lubin-Tate_period} that it is a Lubin--Tate period.

\subsection{}\label{sb:def_w_sigma}
Recall from \ref{sb:def_nu} that we have denoted by $\tilde{F}\subset\Bdr$ the Galois closure of $F$ and by $\Gamma$ the set of $\Qp$-embeddings of $F$ in $\tilde{F}$.
For any $\sigma\in \Gamma$, consider the map 
$$\Fro[\pi]^{\nu(\sigma)}\otimes \sigma \colon D\otimes_{\Fa} F \rightarrow D \otimes_{\Fa} \tilde{F},$$
and set $w_{\sigma}\coloneqq (\Fro[\pi]^{\nu(\sigma)}\otimes \sigma) (w) \in 
D_{\tilde{F}}\coloneqq D_F\otimes_F \tilde{F} $, where $w$ is the basis of $\Fil[1](D_F)$ chosen in \eqref{eq:def_period_D_pi}.  
 We have $w_{\sigma}\not = 0$ since $w\not = 0$ and $\Fro[\pi]^{\nu(\sigma)}\otimes \sigma$ is injective. 
By definition of $\iota$ we get $w_{\iota}=w\otimes 1$.

\begin{lemma}\label{lemma:normal_basis} For any $\cf\in F$ and $d\in D_{\tilde{F}}=D\otimes_{\Fa}\tilde{F}$ denote by $\cf(d)$ the action induced by the action of $F$ on $D$ and by $d\cdot \cf$ the action by multiplication by the scalar $\cf\in\tilde{F}$ on the right. 
Let $w_{\sigma}\in D_{\tilde{F}}$ be the element defined $\mathrm{\ref{sb:def_w_sigma}}$. The following are true.
\begin{enumerate}
\item \label{lemma:normal_basis_1}
For every $\sigma\in \Gamma$, we have $\Fro[\pi]^f (w_{\sigma})=\sigma(\pi) w_{\sigma}$.
	\item \label{lemma:normal_basis_1.5} For every $\sigma\in \Gamma$ and for every $\cf\in F$, we have 
	$ \cf(w_{\sigma})=w_{\sigma}\cdot \sigma(\cf)$.
	\item \label{lemma:normal_basis_2} 
	The family $(w_{\sigma})_{\sigma\in\Gamma}$  forms a basis of $D_{\tilde{F}}$ as vector space over $\tilde{F}$. 
\end{enumerate}
\end{lemma}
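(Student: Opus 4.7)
The plan is to prove the three assertions in the order (2), (1), (3), because (2) is the structural identity that makes (1) immediate and identifies the $F$-eigenline on which $w_\sigma$ lives, reducing (3) to a dimension count.

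First I would treat (2). The key input is the definition of $W$: the line $W\subset D_F$ is characterized by $\cf(w)=w\cdot\cf$ for every $\cf\in F$, where $\cf(w)$ denotes the (left) $F$-action on $D$ extended linearly and $w\cdot\cf$ denotes right multiplication by $\cf$ in the $F$-factor of $D\otimes_{\Fa}F$. I apply the map $\Fro[\pi]^{\nu(\sigma)}\otimes\sigma\colon D\otimes_{\Fa}F\to D\otimes_{\Fa}\tilde F$ to this identity. On the left-hand side, Lemma \ref{lemma:F-action}\eqref{lemma:F-action_1} tells us that $\Fro[\pi]^{\nu(\sigma)}$ commutes with the $F$-action on $D$, so the map commutes with the left multiplication by $\cf$, yielding $\cf(w_\sigma)$. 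On the right-hand side, since $\sigma$ is a ring homomorphism, a direct calculation gives $w_\sigma\cdot\sigma(\cf)$. This establishes \eqref{lemma:normal_basis_1.5}.

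For (1), I specialize (2) to $\cf=\pi$, which gives $\pi(w_\sigma)=w_\sigma\cdot\sigma(\pi)$. By Lemma \ref{lemma:F-action}\eqref{lemma:F-action_2}, $\Fro[\pi]^f$ on $D$ coincides with the left action of $\pi\in F$; since $\Fro[\pi]^f$ is $\Fa$-linear, its extension to $D_{\tilde F}$ still coincides with the left action of $\pi$. Therefore $\Fro[\pi]^f(w_\sigma)=\pi(w_\sigma)=w_\sigma\cdot\sigma(\pi)=\sigma(\pi)\,w_\sigma$, where the final equality is just the fact that $\sigma(\pi)\in\tilde F$ acts as a scalar on the $\tilde F$-vector space $D_{\tilde F}$.

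For (3), I argue by decomposition into eigenlines. By Lemma \ref{rem:libero}\eqref{rem:libero_2}, $D$ is free of rank one over $F\otimes_{\Qp}\Fa$; tensoring over $\Fa$ with $\tilde F$, the space $D_{\tilde F}=D\otimes_{\Fa}\tilde F$ becomes free of rank one over $F\otimes_{\Qp}\tilde F$. Since $\tilde F/\Qp$ is Galois and contains every conjugate of $F$, the ring $F\otimes_{\Qp}\tilde F$ splits canonically as $\prod_{\sigma\in\Gamma}\tilde F$ (the idempotents being indexed by $\Gamma$), and consequently $D_{\tilde F}$ splits as a direct sum $\bigoplus_{\sigma\in\Gamma}L_\sigma$ of $\tilde F$-lines on which $F$ acts on the left through the various $\sigma$. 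By (2), $w_\sigma\in L_\sigma$; as $w_\sigma\neq 0$ (already noted in \ref{sb:def_w_sigma}) and $|\Gamma|=[F:\Qp]=ef=\dim_{\tilde F}D_{\tilde F}$, the family $(w_\sigma)_{\sigma\in\Gamma}$ is a basis. There is no genuine obstacle; the only subtlety is keeping careful track of the two distinct $F$-actions on $D_{\tilde F}$ (the left one from Lemma \ref{lemma:F-action}\eqref{lemma:F-action_1} and the right one inherited from the $\Fa$-structure extended to $\tilde F$) and of the fact that $\sigma|_{\Fa}=\Fro^{\nu(\sigma)}$, which is exactly what makes $\Fro[\pi]^{\nu(\sigma)}\otimes\sigma$ well-defined.
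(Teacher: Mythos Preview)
Your proof is correct and follows essentially the same approach as the paper: the argument for \eqref{lemma:normal_basis_1.5} is identical, your derivation of \eqref{lemma:normal_basis_1} by specializing \eqref{lemma:normal_basis_1.5} at $\cf=\pi$ is equivalent to the paper's direct computation (which implicitly uses the same step $\Fro[\pi]^f(w)=w\cdot\pi$), and your proof of \eqref{lemma:normal_basis_2} via the eigenline decomposition of the free rank-one $F\otimes_{\Qp}\tilde F$-module $D_{\tilde F}$ is exactly the paper's argument spelled out in more detail. The only difference is the order of presentation, which if anything makes the logic cleaner.
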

\begin{dem} 
The proof of $\eqref{lemma:normal_basis_1}$ is a formal argument:
 $$\Fro[\pi]^f (w_{\sigma})= (\Fro[\pi]^{f+\nu(\sigma)}\otimes \sigma) (w)=
(\Fro[\pi]^{\nu(\sigma)}\otimes \sigma) (\Fro[\pi]^f(w))= (\Fro[\pi]^{\nu(\sigma)}\otimes \sigma) (\pi w)
 =\sigma(\pi) w_{\sigma}.$$
Let us prove \eqref{lemma:normal_basis_1.5}: 
$$ \cf(w_{\sigma})=\cf((\Fro[\pi]^{\nu(\sigma)}\otimes \sigma) (w))=(\cf\Fro[\pi]^{\nu(\sigma)}\otimes \sigma) (w)=
(\Fro[\pi]^{\nu(\sigma)}\cf\otimes \sigma) (w) = (\Fro[\pi]^{\nu(\sigma)}\otimes \sigma) (\cf(w)),$$
since the $F$-action on $D$ commutes with $\Fro[\pi]$ by construction. Now by definition of $w$, we have $\cf(w)=w\cdot \cf$, hence 
we finish by 
$$ (\Fro[\pi]^{\nu(\sigma)}\otimes \sigma) (\cf(w)) = (\Fro[\pi]^{\nu(\sigma)}\otimes \sigma) (w\cdot \cf) = w_{\sigma}\cdot \sigma(\cf).$$
Point \eqref{lemma:normal_basis_2}  follows from \eqref{lemma:normal_basis_1.5} : the $w_{\sigma}$ are eigenvectors of multiplicity one diagonalizing the left action of $F$ on $D_{\tilde{F}}$.  
\end{dem}

\begin{prop} \label{prop:check_Lubin-Tate_period}
 Let $\pi\in F$ be a uniformizer and $D$ the associated Lubin--Tate  filtered $\Fro$-module from Definition  
$\mathrm{\ref{sb:def_D_pi_Filtration}}$. Then the element $\alpha\coloneqq\alpha_{\pi}\in\BcrisF\cap\Fil[1]\Bdr$ associated with it in \eqref{eq:def_period_D_pi}  is a Lubin--Tate period relative to $\pi\in F$ in the sense of Definition $\ref{def:Lubin-Tate_period}$.
\end{prop}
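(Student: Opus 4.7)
\begin{dem}[Proof sketch]
The plan is to verify in turn the four conditions of Definition \ref{def:Lubin-Tate_period} for $\alpha = x_F(w)$.

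\emph{Condition \eqref{def:Lubin-Tate_period:3} (Frobenius equation).} I would iterate the commutation $x\circ\Fro[\pi] = \Fro[\mathrm{cris}]\circ x$ and extend right-$F$-linearly to obtain $\Fro[\mathrm{cris}]^f\circ x_F = x_F\circ\Fro[\pi]^f$ on $D_F$; both Frobenii are $F$-linear by Convention \ref{sb:Conv_pHT}\eqref{BcrisE3} and Lemma \ref{lemma:F-action}\eqref{lemma:F-action_2}. On $W$ the two $F$-actions coincide, so $\Fro[\pi]^f(w) = w\cdot\pi$, and applying $x_F$ gives $\Fro^f(\alpha) = \pi\alpha$.

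\emph{Conditions \eqref{def:Lubin-Tate_period:1} and \eqref{def:Lubin-Tate_period:2} (de Rham valuations).} For each $\sigma\in\Gamma$ I would first establish the key identity
\[\sigma\cris(\alpha) = \tilde x(w_\sigma),\]
where $\tilde x\colon D_{\tilde F}\to\Bdr$ is the $\tilde F$-linear extension of $x$ via $\tilde F\subset\Bdr$; this is a direct unpacking of definitions using $\Fro[\mathrm{cris}]^{\nu(\sigma)}\circ x = x\circ\Fro[\pi]^{\nu(\sigma)}$. Since $\tilde x$ is filtration-preserving (as $x$ is a morphism of filtered modules) and the filtration on $D_{\tilde F}$ only jumps at $\Fil^1 = \tilde F\cdot w_\iota$, I immediately obtain the lower bounds $\vdR(\alpha)\geq 1$ and $\vdR(\sigma\cris(\alpha))\geq 0$ for $\sigma\neq\iota$. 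To upgrade these to equalities I would invoke admissibility: the $F$-action makes $V\coloneqq\Vcrisv(D)$ one-dimensional over $F$ since $\dim_{\Qp}V = ef$; the $F$-eigendecompositions $V\otimes_\Qp\tilde F = \bigoplus_\sigma V^\sigma$ and $D_{\tilde F} = \bigoplus_\sigma D^\sigma$ combined with the $F$-equivariance of the evaluation pairing force the period matrix $(x_\sigma(w_\tau))_{\sigma,\tau}$ to be diagonal, with entries $\alpha_\sigma\coloneqq\sigma\cris(\alpha)$. Its determinant $\prod_\sigma\alpha_\sigma$ corresponds, via the comparison isomorphism applied to $\det D$, to the (essentially unique) comparison period of the one-dimensional filtered $\Fro$-module $\det D$, so has $\vdR$ equal to the total Hodge weight of $D$, which is $1$ by Proposition \ref{prop:D_is_admissible}. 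Combined with the lower bounds, this forces $\vdR(\alpha) = 1$ and $\vdR(\sigma\cris(\alpha)) = 0$ for $\sigma\neq\iota$.

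\emph{Condition \eqref{def:Lubin-Tate_period:4} (invertibility).} I would apply an analogous construction to the admissible dual filtered $\Fro$-module $D^\vee$ (with Hodge weights $-1,0,\ldots,0$ and Newton slope $-1/ef$), producing a period $\beta\in\BcrisF$ with $\vdR(\beta) = -1$, $\vdR(\sigma\cris(\beta))=0$ for $\sigma\neq\iota$, and $\Fro^f(\beta) = \pi^{-1}\beta$, compatible with the canonical pairing $D\otimes D^\vee\to\one$. Then $\alpha\beta\in\BcrisF$ is $\Fro^f$-fixed with $\vdR(\sigma\cris(\alpha\beta)) = 0$ for every $\sigma\in\Gamma$, so Lemma \ref{lemma:discesa_F} gives $\alpha\beta\in F^\times$; this yields $\alpha^{-1}\in\BcrisF$.

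The hardest step is the equality case in conditions \eqref{def:Lubin-Tate_period:1} and \eqref{def:Lubin-Tate_period:2}: the inequalities are formal, but pinning down the exact valuations requires admissibility in an essential way, through the determinant computation via the comparison isomorphism. The remaining conditions are either direct manipulations or follow from a dual construction combined with Lemma \ref{lemma:discesa_F}.
\end{dem}
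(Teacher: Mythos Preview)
Your argument is correct and, for conditions \eqref{def:Lubin-Tate_period:1}--\eqref{def:Lubin-Tate_period:3}, follows the same route as the paper: the Frobenius equation is checked directly from $\Fro[\pi]^f(w)=w\cdot\pi$, the identity $\sigma\cris(\alpha)=\tilde x(w_\sigma)$ gives the lower bounds on the $\vdR$, and the determinant of the (diagonal) period matrix, being the comparison period of the rank-one admissible module $\det D$, has $\vdR$ equal to $1$, which forces the inequalities to be equalities.

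The only genuine divergence is in condition \eqref{def:Lubin-Tate_period:4}. The paper does not pass to the dual: it observes that the very product $P=\prod_{\sigma\in\Gamma}\sigma\cris(\alpha)$ you computed for the valuation argument is already an \emph{invertible} element, since it is (up to a unit in $\tilde F^\times$) the image of $\wedge_j\mathfrak{e}_j$ under a nonzero morphism from the admissible rank-one $\det D$ to $\Bcris$. A short Galois descent (the action of $\Gal{\tilde F}{\tilde F_aF}$ permutes the factors $\sigma\cris(\alpha)$ and fixes $P$) places $P$ in $\BcrisF^\times$ rather than merely $\BcrisFtilde^\times$. Then $\alpha$ divides $P$ in $\BcrisFtilde$, hence $\alpha\in\BcrisFtilde^\times$, and since $\alpha$ lies in the $\Gal{\tilde F}{\tilde F_aF}$-invariants $\BcrisF$, so does $\alpha^{-1}$. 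Your dual construction via $D^\vee$ and Lemma~\ref{lemma:discesa_F} is valid, but it requires redoing the determinant argument for $D^\vee$ to get $\vdR(\beta)=-1$ exactly, and checking that the eigenline $W'\subset D^\vee_F$ where the two $F$-actions agree is not contained in $W^\perp$; so it duplicates work that the $\det D$ computation already contains.
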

\begin{dem} %
We compute 
$$\Fro[\mathrm{cris}]^f(\alpha) = ((\Fro[\mathrm{cris}]^f\otimes \id[F])\circ (x\otimes \iota))(w) =((x\otimes \iota)\circ (\Fro[\pi]^f\otimes \id[F]))(w)= (x\otimes \iota)(w \cdot \pi)= \pi \alpha,$$
so  the condition \eqref{def:Lubin-Tate_period:3} of Definition \ref{def:Lubin-Tate_period} holds.

Let us check the other properties of Definition \ref{def:Lubin-Tate_period}.
By construction $\vdR(\alpha)\geq 1$ and,  
for all $\sigma\in \Gamma\backslash\{ \iota\}$,  we compute
\begin{equation}\label{eq:ricotta}
\sigma\cris(\alpha)= ((\Fro[\mathrm{cris}]^{\nu(\sigma)}\otimes \sigma)\circ (x\otimes\iota))(w)=
((x\otimes\id[\tilde{F}]) \circ (\Fro[\pi]^{\nu(\sigma)}\otimes \sigma))(w)=	(x \otimes \id[\tilde{F}]) (w_{\sigma}),
\end{equation}
and in  particular  $\vdR(\sigma\cris(\alpha))\geq 0$, since 
$(x\otimes \id[\tilde{F}])(D_{\tilde{F}})\subset \BcrisFtilde\cap\Bdr^+$.

Let us define
\begin{equation} \label{eq:def_P}
	P\coloneqq\prod_{\sigma\in\Gamma} \sigma\cris(\alpha) 
	%= (x\otimes \id[\tilde{F}])(\wedge_{\sigma\in\Gamma} w_{\sigma}) \in \tilde{F}^{\times}\cdot x(\wedge_{j=0}^{ef} \mathfrak{e}_j)\subset \tilde{F}^{\times}\cdot \Bcris \subset 
	\in \BcrisFtilde.
\end{equation}
We claim the following:
\begin{enumerate}
	\item $P$ belongs to $\BcrisF$.
	\item $P$ is invertible in $\BcrisF$.
	\item  $\vdR(P) =1$.
\end{enumerate} 
These claims conclude the proof.  
Indeed the equality $\sum_{\sigma\in\Gamma}\vdR(\sigma\cris(\alpha))=\vdR(P) =1$ implies 
that the inequalities we have shown above on the $\vdR(\sigma\cris(\alpha))$  must be equalities.
Moreover, $\alpha$ is invertible in $\BcrisF$, since it divides $P$ which is invertible. 
%We have thus verified that $\alpha$ fulfills the four properties of Definition \ref{def:Lubin-Tate_period}.

Let us first prove claim $(1)$.
Consider the Galois group $G\coloneqq\Gal{\tilde{F}}{F'}$, where $F'\coloneqq\tilde{F}_a F$. 
Let $G$ act on $\BcrisFtilde=\Bcris\otimes_{\tilde{F}_a}\tilde{F}$ as the identity on \Bcris\ and via its natural action on the right.
%The action of $G$ on $\Gamma$ by composition is bijective ($G$ is a group), hence $P$ is stable under $G$ and it belongs to $$(\BcrisFtilde)^G= B_{\mathrm{cris},F'}=\Bcris\otimes_{\tilde{F}_a}\tilde{F}_aF=
%\Bcris\otimes_{\Fa}F  =\BcrisF.$$ 
By its very construction $P$ is stable under $G$ hence it belongs to $$(\BcrisFtilde)^G= B_{\mathrm{cris},F'}=\Bcris\otimes_{\tilde{F}_a}\tilde{F}_aF=
\Bcris\otimes_{\Fa}F  =\BcrisF.$$

To show claims $(2)$ and $(3)$ consider the 
 determinant $\Delta$ of the filtered $\Fro$-module $D$ 
$$ \Delta \coloneqq \det_{\Fa}(D) = \bigwedge^{ef} D. $$
It is  admissible of slope one (both Newton and Hodge), hence 
$\Vcrisv(\Delta)$ is a rank one crystalline representation of Hodge-Tate weight one. 
To conclude, it is enough to show that, up to a constant in $F^{\times}$, $P$ is the period $\delta\in\Bcris^{\times}$ of $ \Delta$. 

The left $F$-action on $V_{\tilde{F}}\coloneqq\Vcrisv(D)\otimes_{\Qp}\tilde{F}$ decomposes the element $x\otimes \id[\tilde{F}] $  as sum of eigenvectors $x\otimes \id[\tilde{F}]=\sum_{\sigma\in\Gamma} x_{\sigma}$.
%We have $x_{\tau}(w_{\sigma})=0$ whenever $\tau\not = \sigma$, because $x_{\tau}$ and $w_{\sigma}$ live in eigenlines of different eigenvalues.   
By Lemma \ref{lemma:normal_basis}\eqref{lemma:normal_basis_2}, $(w_{\sigma})_{\sigma\in\Gamma}$  is a basis of $D_{\tilde{F}}$, and similarly 
$(x_{\sigma})_{\sigma\in\Gamma}$ is a basis of $V_{\tilde{F}}$. 
We have $x_{\tau}(w_{\sigma})=0$ whenever $\tau\not = \sigma$, because $x_{\tau}$ and $w_{\sigma}$ live in eigenlines of different eigenvalues.
Hence, up to a constant $u$ in $\tilde{F}^{\times}$, we can compute the period $\delta$ %of $\Delta$ 
by using these bases,  as  
$$ u\delta = \prod_{\sigma\in\Gamma} (x_{\sigma}\otimes \id[\tilde{F}])(w_{\sigma}) = \prod_{\sigma\in\Gamma} (x\otimes \id[\tilde{F}])(w_{\sigma}), 
%= \prod_{\sigma\in\Gamma} \sigma\cris(\alpha)=P. 
$$
which is equal to $P$ by  \eqref{eq:ricotta}.
%where the second equality follow by  \eqref{eq:ricotta}.
%This proves  $P = u \delta$, with $u\in\tilde{F}^{\times}$, but $u$ belongs to $F$, by claim $(1)$.
Finally, by claim $(1)$ the constant $u$ must belong to $F^{\times}$.
\end{dem}

\begin{remark}  \label{Remark_prod_alpha}
	The period $\delta$ of $\Delta$ can be computed directly by taking the determinant of the matrix $A$ of \eqref{eq:sb:FROB_D_pi}. This leads to a finer expression of $P$ as 
	\begin{equation}
			P %\coloneqq\prod_{\sigma\in\Gamma} \sigma\cris(\alpha)
			 =   u c_{\eta} t \in \BcrisF^{\times},
			\label{eq:alpha_divide_t}
		\end{equation}
	where $u\in {F}^{\times}$, 
	$t$ is the period of $\Qp(1)$, {\itshape cf.} Convention~\ref{sb:Conv_pHT}\eqref{Bcris_3}, 
	and $c_{\eta}\in W(\overline{\mathbb{F}}_p)^{\times} $  %(\widehat{F}\nr)^{\times}$ 
	is the period of an unramified character $\eta\colon G_F\rightarrow \Zp^{\times}$. Notice that such a period $c_{\eta}$ lies in $W(\overline{\mathbb{F}}_p)^{\times} \subset \Bcris^{\times}$  since the cohomology group $H^1(G_{k_F}, W(\overline{\mathbb{F}}_p)^{\times})$ is trivial by \cite[III-33 Lemma]{MR1484415}. 
	
	More precisely, the period $c_{\eta}$ can be chosen in $W(\overline{\mathbb{F}}_p)^{\times}$, up to an invertible in $\Zp$, as the solution of 
	the equation 
	\begin{equation}
	\frac{\Fro (c_{\eta})}{c_{\eta}}= \frac{\det A}{p},  %=  {(-1)}^{e^2(f-1)}\frac{ N_{F/\Fa}(\pi)}{p}, $$
\end{equation}
	which exists by \cite[Chap.~V, \S2, Lemma~(2.1)]{MR1697859} since ${\det A}$ %$\frac{\det A}{p}$ %= {(-1)}^{e^2(f-1)}\frac{ N_{F/\Fa}(\pi)}{p}$ 
	is a uniformizer of $W(k_F)$. 
	
%	The constant $u\in F^{\times}$ is well-determined up to an element of $N_{F/\Qp}(F^{\times})$ by the choices of $w$ and $x$ in the definition 
%of $\alpha$, {\itshape cf.} \S\ref{def_period_D_pi}. We left to the interested reader to make it explicit by following the bases changes at the end of  the proof of Proposition \ref{prop:check_Lubin-Tate_period}.	 
	
	Furthermore, notice that the action of the Frobenius $\Fro^{f}$ of $\BcrisF$ on $P$ is easy to compute as 
	\begin{equation}
	\Fro^f(P) = N_{F/\Qp}(\pi) P.	
	\end{equation}
%	In particular, when $F/\Qp$ is totally ramified, the period $P$ is a Lubin-Tate period relative to the uniformizer $N_{F/\Qp}(\pi)$ of $\Qp$.
\end{remark}

%\begin{remark}  \label{Remark_prod_alpha}
%	More precisely we have the equality
%	\begin{equation}
%		P\coloneqq\prod_{\sigma\in\Gamma} \sigma\cris(\alpha) =   u c_{\eta} t \in \BcrisF^{\times},
%		\label{eq:alpha_divide_t}
%	\end{equation}
%	where $u\in {F}^{\times}$, 
%	$t$ is the period of $\Qp(1)$, {\itshape cf.} Convention~\ref{sb:Conv_pHT}\eqref{Bcris_3}, 
%	and $c_{\eta}\in W(\overline{\mathbb{F}}_p)^{\times} $  %(\widehat{F}\nr)^{\times}$ 
%	is the period of an unramified character $\eta\colon G_F\rightarrow \Zp^{\times}$. Notice that such a period $c_{\eta}$ lies in $W(\overline{\mathbb{F}}_p)^{\times} \subset \Bcris$  since the cohomology group $H^1(G_{k_F}, W(\overline{\mathbb{F}}_p)^{\times})$ is trivial by \cite[III-33 Lemma]{MR1484415}. 
%	
%	The equality \eqref{eq:alpha_divide_t} can be shown, for example,
%	 by using that rank one crystalline representations of Hodge-Tate weight one
%	are of the form $\eta \otimes \Qp(1)$ and so $\delta=c_{\eta} t$.
%\end{remark}

\begin{prop}\label{prop:LT_char}
Let $\pi\in F$ be a uniformizer and $D$ the associated Lubin--Tate filtered $\Fro$-module, from Definition 
\emph{\ref{sb:def_D_pi_Filtration}}. Then the crystalline representation $\Vcrisv(D)$ is given by the Lubin--Tate character associated with $\pi\in F$. 
%{\itshape i.e.} the Tate module of a Lubin--Tate group. 
\end{prop}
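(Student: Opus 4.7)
The plan is to identify the character $\chi\colon G_F\rightarrow F^{\times}$ of the one-dimensional $F$-linear crystalline representation $\Vcrisv(D)$ with the Lubin--Tate character $\chi_\pi$, by using the period $\alpha=\alpha_\pi$ of \eqref{eq:def_period_D_pi} as a bridge between our module-theoretic construction and Colmez's explicit formal-groups construction in \cite{MR1956055}.

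First I would observe that by admissibility (Proposition \ref{prop:D_is_admissible}) together with the $F$-action on $D$, the representation $\Vcrisv(D)$ is a free $F$-module of rank one, hence corresponds to some character $\chi\colon G_F\rightarrow F^{\times}$. I would then show that $\chi$ is precisely the Galois action on $\alpha$. Writing $\alpha=x_F(w)$ with $x\in\Vcrisv(D)$ and $w\in W\subset D_F$ as in \ref{def_period_D_pi}, this is immediate: the Galois action on $\Vcrisv(D)$ is $(g\cdot x)(d)=g(x(d))$, and $g\in G_F$ fixes $F$ pointwise, so expanding $w$ as a sum in $D\otimes_{\Fa}F$ yields
\[g(\alpha)=(g\cdot x)_F(w)=\chi(g)\cdot x_F(w)=\chi(g)\,\alpha.\]

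Next I would establish the uniqueness of Lubin--Tate periods relative to $\pi$ up to $F^{\times}$: if $\alpha'$ is another such period in the sense of Definition \ref{def:Lubin-Tate_period}, then $\alpha/\alpha'\in\BcrisF^{\times}$ satisfies $\vdR(\sigma(\alpha/\alpha'))=0$ for every $\sigma\in\Gamma$ and $\Fro^f(\alpha/\alpha')=\alpha/\alpha'$, so $\alpha/\alpha'\in F^{\times}$ by Lemma \ref{lemma:discesa_F}. The proof of that lemma, based on the fundamental exact sequence, does not use the involution $*$ of Section \ref{sect:p-adic_periods} and applies verbatim here.

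Finally I would compare with Colmez's construction in \cite{MR1956055}: his Lubin--Tate period $\alpha^{C}\in \BcrisF$, attached to the uniformizer $\pi$, satisfies the four properties of Definition \ref{def:Lubin-Tate_period} and $G_F$ acts on it by $\chi_\pi$ essentially by construction. By the uniqueness just proved, $\alpha=c\,\alpha^{C}$ for some $c\in F^{\times}$ fixed by $G_F$, hence $\chi=\chi_\pi$. The subtle point requiring the most care is to verify that our Frobenius normalization $\Fro^f(\alpha)=\pi\alpha$ matches Colmez's convention precisely, so that one recovers $\chi_\pi$ rather than an unramified twist or its inverse; this reflects the fact that replacing $\pi$ by another uniformizer twists the Lubin--Tate character by an unramified character, a dependence which is built directly into the Frobenius slope datum of $D=D_\pi$.
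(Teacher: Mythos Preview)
Your proposal is correct and follows essentially the same approach as the paper: both arguments use Lemma \ref{lemma:discesa_F} to show that the period $\alpha$ and Colmez's explicitly constructed Lubin--Tate period differ by an element of $F^{\times}$, and then identify the Galois action on $\Vcrisv(D)$ with the Galois action on $\alpha$ via the evaluation map $x\mapsto (x\otimes\iota)(w)$. The only cosmetic difference is the ordering: the paper first transports the Lubin--Tate character to $\alpha$ and then identifies $\Vcrisv(D)$ with $F\cdot\alpha$, whereas you first name the character of $\Vcrisv(D)$ as $\chi$ and then match it to $\chi_\pi$ via the same uniqueness step.
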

\begin{dem} Let $\mathfrak{F}$ be a Lubin--Tate group over $\mca{O}_F$ associated with $\pi$. Let $T$ be its Tate module, 
given by the Lubin--Tate character $\psi\colon G_F\rightarrow \mca{O}^{\times}_F$; set $V\coloneqq T[1/p]$. We want to show that 
$\Vcrisv(D)=V$. Colmez constructs an element $t_{\mathfrak{F}}\in\BcrisF$, satisfying Definition \ref{def:Lubin-Tate_period} and 
such that for every $g\in G$, $g(t_{\mathfrak{F}})=\psi(g)t_{\mathfrak{F}}$, see for example \cite[\S3.6]{fourquaux_these}. 
Let $\alpha=\alpha_{\pi}\in\BcrisF^{\times}$ the Lubin--Tate period in Proposition \ref{prop:check_Lubin-Tate_period}.
By Lemma \ref{lemma:discesa_F} the period $t_{\mathfrak{F}}\alpha^{-1}$ belongs to $F^{\times}$, whence we have also, for every $g\in G_F$, $g(\alpha)=\psi(g)\alpha$. 
The map $\Vcrisv(D)\rightarrow \BcrisF$, defined by $x\mapsto (x\otimes \iota)(w)$, is $G_F$-equivariant by definition of the action of 
$G_F$ on $\Vcrisv(D)$ and it is $F$-linear by definition of $w$, {\itshape cf.}~\eqref{eq:def_period_D_pi}.
%Moreover,  it is injective since $(x\otimes \id[\tilde{F}])(w_{\sigma})=\sigma\cris((x\otimes\iota)(w))$ and $(w_{\sigma})_{\sigma\in\Gamma}$ is a basis of 
%$D_{\tilde{F}}\supset  D_{F}$ by Lemma \ref{lemma:normal_basis}\eqref{lemma:normal_basis_2}.
Therefore this map identifies $\Vcrisv(D)$ to $F\cdot \alpha \subset \BcrisF$ and that completes the proof.
\end{dem}
\begin{remark}
The representation $\Vcrisv(D)$ is an example of $F$-crystalline representation, {\itshape cf.} \cite{MR2565906}*{Introduction}. 
For a more general construction relating $F$-crystalline representations to their filtered $\Fro$-modules see %{\itshape loc. cit.} 
\citelist{\cite{MR2565906}*{\S(3.3)} \cite{MR3505135}*{\S4.1}}. 
\end{remark}

%%%%%%%%%%%%%%%%%%%%%%%%%%%%%%%%%%%%%%%%%%%%%%%%%%%%%%%%%%%%%%%%%%%%%%%%%%%%%%%%%%%%%%%%%%%%%%%%%%%%%%%%%%%%%%%%%%%%%%%%%%%
\begin{bibdiv}
\begin{biblist}

\bib{Agug}{article}{
%doi = {10.48550/arXiv.2401.17445},
eprint = {https://arxiv.org/abs/2401.17445},
	author={Agugliaro, Thomas},
	title={Examples for the standard conjecture of Hodge type},
journal = {arXiv},
	year={2024},
}

\bib{MR4199442}{article}{
   author={Ancona, Giuseppe},
   title={Standard conjectures for abelian fourfolds},
   journal={Invent. Math.},
   volume={223},
   date={2021},
   number={1},
   pages={149--212},
   issn={0020-9910},
   review={\MR{4199442}},
   doi={10.1007/s00222-020-00990-7},
}

\bib{AncFra}{article}{
 % doi = {10.48550/ARXIV.2207.09213},
  
  eprint = {https://arxiv.org/abs/2207.09213},
  
  author = {Ancona, Giuseppe},
	author = {Fratila, Dragos},
  
  %keywords = {Number Theory (math.NT), Algebraic Geometry (math.AG), FOS: Mathematics, FOS: Mathematics},
  
  title = {Algebraic classes in mixed characteristic and André's p-adic periods},
  
  journal = {arXiv},
  
  year = {2022},
  
 % copyright = {Creative Commons Attribution 4.0 International}
}

\bib{MR2115000}{book}{
   author={Andr\'{e}, Yves},
   title={Une introduction aux motifs (motifs purs, motifs mixtes,
   p\'{e}riodes)},
   language={French, with English and French summaries},
   series={Panoramas et Synth\`eses [Panoramas and Syntheses]},
   volume={17},
   publisher={Soci\'{e}t\'{e} Math\'{e}matique de France, Paris},
   date={2004},
   pages={xii+261},
   isbn={2-85629-164-3},
   review={\MR{2115000}},
}

\bib{And05}{article}{
   author={Andr\'{e}, Yves},
   title={Motifs de dimension finie (d'apr\`es S.-I. Kimura, P.
   O'Sullivan$\dots$)},
   language={French, with French summary},
   note={S\'{e}minaire Bourbaki. Vol. 2003/2004},
   journal={Ast\'{e}risque},
   number={299},
   date={2005},
   pages={Exp. No. 929, viii, 115--145},
   issn={0303-1179},
   review={\MR{2167204}},
}

\bib{MR4068300}{article}{
   author={Berger, Laurent},
   author={Schneider, Peter},
   author={Xie, Bingyong},
   title={Rigid character groups, Lubin-Tate theory, and
   $(\varphi,\Gamma)$-modules},
   journal={Mem. Amer. Math. Soc.},
   volume={263},
   date={2020},
   number={1275},
   pages={v+79},
   issn={0065-9266},
   isbn={978-1-4704-4073-2},
   isbn={978-1-4704-5658-0},
   review={\MR{4068300}},
   doi={10.1090/memo/1275},
}

\bib{MR0274237}{book}{
   author={Bourbaki, Nicolas},
   title={\'{E}l\'{e}ments de math\'{e}matique. Alg\`ebre. Chapitres 1 \`a 3},
   language={French},
   publisher={Hermann, Paris},
   date={1970},
   pages={xiii+635 pp. (not consecutively paged)},
   review={\MR{0274237}},
}

\bib{MR1727221}{book}{
   author={Bourbaki, Nicolas},
   title={Commutative algebra. Chapters 1--7},
   series={Elements of Mathematics (Berlin)},
   note={Translated from the French;
   Reprint of the 1989 English translation},
   publisher={Springer-Verlag, Berlin},
   date={1998},
   pages={xxiv+625},
   isbn={3-540-64239-0},
   review={\MR{1727221}},
}

\bib{MR1944572}{article}{
   author={Breuil, Christophe},
   author={M\'{e}zard, Ariane},
   title={Multiplicit\'{e}s modulaires et repr\'{e}sentations de ${\rm GL}_2({\bf
   Z}_p)$ et de ${\rm Gal}(\overline{\bf Q}_p/{\bf Q}_p)$ en $l=p$},
   language={French, with English and French summaries},
   note={With an appendix by Guy Henniart},
   journal={Duke Math. J.},
   volume={115},
   date={2002},
   number={2},
   pages={205--310},
   issn={0012-7094},
   review={\MR{1944572}},
   doi={10.1215/S0012-7094-02-11522-1},
}

\bib{MR4015230}{article}{
	author={Buskin, Nikolay},
	title={Every rational Hodge isometry between two $K3$ surfaces is
		algebraic},
	journal={J. Reine Angew. Math.},
	volume={755},
	date={2019},
	pages={127--150},
	issn={0075-4102},
	review={\MR{4015230}},
	doi={10.1515/crelle-2017-0027},
}

\bib{MR3505135}{article}{
   author={Cais, Bryden},
   author={Liu, Tong},
   title={On $F$-crystalline representations},
   journal={Doc. Math.},
   volume={21},
   date={2016},
   pages={223--270},
   issn={1431-0635},
   review={\MR{3505135}},
   doi={10.1007/s12204-016-1722-3},
}

\bib{MR1247996}{article}{
   author={Colmez, Pierre},
   title={P\'{e}riodes des vari\'{e}t\'{e}s ab\'{e}liennes \`a multiplication complexe},
   language={French},
   journal={Ann. of Math. (2)},
   volume={138},
   date={1993},
   number={3},
   pages={625--683},
   issn={0003-486X},
   review={\MR{1247996}},
   doi={10.2307/2946559},
}

\bib{MR1956055}{article}{
   author={Colmez, Pierre},
   title={Espaces de Banach de dimension finie},
   language={French, with English and French summaries},
   journal={J. Inst. Math. Jussieu},
   volume={1},
   date={2002},
   number={3},
   pages={331--439},
   issn={1474-7480},
   review={\MR{1956055}},
   doi={10.1017/S1474748002000099},
}

%\bib{MR2493217}{article}{
   %author={Colmez, Pierre},
   %title={Espaces vectoriels de dimension finie et repr\'{e}sentations de de
   %Rham},
   %language={French, with English and French summaries},
   %note={Repr\'{e}sentations $p$-adiques de groupes $p$-adiques. I.
   %Repr\'{e}sentations galoisiennes et $(\phi,\Gamma)$-modules},
   %journal={Ast\'{e}risque},
   %number={319},
   %date={2008},
   %pages={117--186},
   %issn={0303-1179},
   %isbn={978-2-85629-256-3},
   %review={\MR{2493217}},
%}

\bib{MR1779803}{article}{
   author={Colmez, Pierre},
   author={Fontaine, Jean-Marc},
   title={Construction des repr\'{e}sentations $p$-adiques semi-stables},
   language={French},
   journal={Invent. Math.},
   volume={140},
   date={2000},
   number={1},
   pages={1--43},
   issn={0020-9910},
   review={\MR{1779803}},
   doi={10.1007/s002220000042},
}

%
%\bib{MR....}{article}{
   %author={},
   %author={},
   %title={},
   %journal={},
   %volume={},
   %date={},
   %number={},
   %pages={},
   %%issn={0003-486X},
   %%review={\MR{534763}},
   %%doi={10.2307/1971226},
	%note={},
%}

\bib{MR98079}{article}{
   author={Dwork, Bernard},
   title={Norm residue symbol in local number fields},
   journal={Abh. Math. Sem. Univ. Hamburg},
   volume={22},
   date={1958},
   pages={180--190},
   issn={0025-5858},
   review={\MR{98079}},
   doi={10.1007/BF02941951},
}

\bib{MR1274045}{book}{
   author={Dwork, Bernard},
   author={Gerotto, Giovanni},
   author={Sullivan, Francis J.},
   title={An introduction to $G$-functions},
   series={Annals of Mathematics Studies},
   volume={133},
   publisher={Princeton University Press, Princeton, NJ},
   date={1994},
   pages={xxii+323},
   isbn={0-691-03681-0},
   review={\MR{1274045}},
}

\bib{MR1463696}{article}{
   author={Faltings, Gerd},
   title={Crystalline cohomology and $p$-adic Galois-representations},
   conference={
      title={Algebraic analysis, geometry, and number theory},
      address={Baltimore, MD},
      date={1988},
   },
   book={
      publisher={Johns Hopkins Univ. Press, Baltimore, MD},
   },
   date={1989},
   pages={25--80},
   review={\MR{1463696}},
}

\bib{MR657238}{article}{
   author={Fontaine, Jean-Marc},
   title={Sur certains types de repr\'{e}sentations $p$-adiques du groupe de
   Galois d'un corps local; construction d'un anneau de Barsotti-Tate},
   language={French},
   journal={Ann. of Math. (2)},
   volume={115},
   date={1982},
   number={3},
   pages={529--577},
   issn={0003-486X},
   review={\MR{657238}},
   doi={10.2307/2007012},
}

\bib{MR1293971}{article}{
   author={Fontaine, Jean-Marc},
   title={Le corps des p\'eriodes $p$-adiques},
 %  language={French},
   note={With an appendix by Pierre Colmez;
   P\'eriodes $p$-adiques (Bures-sur-Yvette, 1988)},
   journal={Ast\'erisque},
   number={223},
   date={1994},
   pages={59--111},
   issn={0303-1179},
   review={\MR{1293971}},
}

\bib{MR1293972}{article}{
   author={Fontaine, Jean-Marc},
   title={Repr\'{e}sentations $p$-adiques semi-stables},
   language={French},
   note={With an appendix by Pierre Colmez;
   P\'{e}riodes $p$-adiques (Bures-sur-Yvette, 1988)},
   journal={Ast\'{e}risque},
   number={223},
   date={1994},
   pages={113--184},
   issn={0303-1179},
   review={\MR{1293972}},
}

%\bib{MR1293977}{article}{
   %author={Fontaine, Jean-Marc},
   %title={Repr\'esentations $l$-adiques potentiellement semi-stables},
   %%language={French},
   %note={P\'eriodes $p$-adiques (Bures-sur-Yvette, 1988)},
   %journal={Ast\'erisque},
   %number={223},
   %date={1994},
   %pages={321--347},
   %issn={0303-1179},
   %review={\MR{1293977}},
%}

\bib{fontaine_ouyang}{webpage}{
  title={Theory of $p$-adic Galois representations},
  author={Fontaine, Jean-Marc},
	author={Ouyang, Yi},
	date={2022-5-24},
	URL = {http://staff.ustc.edu.cn/~yiouyang/research.html},
	%accessdate={2022-9-30}
	%publisher={Springer},	 
}

\bib{fourquaux_these}{thesis}{
  TITLE = {{Logarithme de Perrin-Riou pour des extensions associ{\'e}es {\`a} un groupe de Lubin-Tate}},
  AUTHOR = {Fourquaux, Lionel},
  URL = {https://tel.archives-ouvertes.fr/tel-00011919},
  SCHOOL = {{Universit{\'e} Pierre et Marie Curie - Paris VI}},
  YEAR = {2005-12-12},
%  MONTH = {Dec},
  %KEYWORDS = {p-adic representations ; de Rham representations ; p-adic periods ; Iwasawa theory ; Perrin-Riou exponential ; p-adic L functions ; repr{\'e}sentations p-adiques ; repr{\'e}sentations de de Rham ; p{\'e}riodes p-adiques ; th{\'e}orie d'Iwasawa ; exponentielle de Perrin-Riou ; fonctions L p-adiques},
  TYPE = {Theses},
  eprint = {https://tel.archives-ouvertes.fr/tel-00011919/file/these.pdf},
 %% HAL_ID = {tel-00011919},
  %HAL_VERSION = {v1},
}

\bib{Gro}{article}{
   author={Grothendieck, A.},
   title={Standard conjectures on algebraic cycles},
   conference={
      title={Algebraic Geometry},
      address={Internat. Colloq., Tata Inst. Fund. Res., Bombay},
      date={1968},
   },
   book={
      series={Tata Inst. Fundam. Res. Stud. Math.},
      volume={4},
      publisher={Published for the Tata Institute of Fundamental Research,
   Bombay by Oxford University Press, London},
   },
   date={1969},
   pages={193--199},
   review={\MR{0268189}},
}

\bib{MR1070716}{book}{
   author={Ireland, Kenneth},
   author={Rosen, Michael},
   title={A classical introduction to modern number theory},
   series={Graduate Texts in Mathematics},
   volume={84},
   edition={2},
   publisher={Springer-Verlag, New York},
   date={1990},
   pages={xiv+389},
   isbn={0-387-97329-X},
   review={\MR{1070716}},
   doi={10.1007/978-1-4757-2103-4},
}

\bib{MR4241794}{article}{
   author={Ito, Kazuhiro},
   author={Ito, Tetsushi},
   author={Koshikawa, Teruhisa},
   title={CM liftings of $K3$ surfaces over finite fields and their
   applications to the Tate conjecture},
   journal={Forum Math. Sigma},
   volume={9},
   date={2021},
   pages={Paper No. e29, 70},
   review={\MR{4241794}},
   doi={10.1017/fms.2021.24},
}

\bib{IIK}{article}{
   author={Ito, Kazuhiro},
	 author={Ito, Tetsushi}, 
	 author={Koshikawa, Teruhisa},
   title = {The Hodge standard conjecture for self-products of K3 surfaces},
   eprint={https://arxiv.org/abs/2206.10086},
   doi={10.48550/ARXIV.2206.10086},
%	keywords = {Algebraic Geometry (math.AG), FOS: Mathematics, FOS: Mathematics, 14J28, 11G15, 14C25},
  publisher = {arXiv},
  year = {2022},
%  copyright = {arXiv.org perpetual, non-exclusive license}
}

\bib{MR2125735}{article}{
    AUTHOR = {Ito, Tetsushi},
     TITLE = {Weight-monodromy conjecture for {$p$}-adically uniformized
              varieties},
   JOURNAL = {Invent. Math.},
%  FJOURNAL = {Inventiones Mathematicae},
    VOLUME = {159},
      YEAR = {2005},
    NUMBER = {3},
     PAGES = {607--656},
       DOI = {10.1007/s00222-004-0395-y},
}

\bib{MR1150598}{article}{
   author={Jannsen, Uwe},
   title={Motives, numerical equivalence, and semi-simplicity},
   journal={Invent. Math.},
   volume={107},
   date={1992},
   number={3},
   pages={447--452},
   issn={0020-9910},
   review={\MR{1150598}},
   doi={10.1007/BF01231898},
}

\bib{Kahn}{article}{
	author={Kahn, Bruno},
	author={Murre, Jacob P.},
	author={Pedrini, Claudio},
	title={On the transcendental part of the motive of a surface},
	conference={
		title={Algebraic cycles and motives. Vol. 2},
	},
	book={
		series={London Math. Soc. Lecture Note Ser.},
		volume={344},
		publisher={Cambridge Univ. Press, Cambridge},
	},
	isbn={978-0-521-70175-4},
	isbn={0-521-70175-9},
	date={2007},
	pages={143--202},
	review={\MR{2187153}},
}

\bib{MR563463}{article}{
   author={Katz, Nicholas M.},
   title={Slope filtration of $F$-crystals},
   conference={
      title={Journ\'{e}es de G\'{e}om\'{e}trie Alg\'{e}brique de Rennes},
      address={Rennes},
      date={1978},
   },
   book={
      series={Ast\'{e}risque},
      volume={63},
      publisher={Soc. Math. France, Paris},
   },
   date={1979},
   pages={113--163},
   review={\MR{563463}},
}

\bib{MR0498577}{article}{
   author={Katz, Nicholas M.},
   title={Travaux de Dwork},
   language={French, with English summary},
   conference={
      title={S\'{e}minaire Bourbaki, 24\`eme ann\'{e}e (1971/1972), Exp. No. 409},
   },
   book={
      series={Lecture Notes in Math., Vol. 317},
      publisher={Springer, Berlin},
   },
   date={1973},
   pages={167--200},
   review={\MR{0498577}},
}

%\bib{MR707328}{article}{
   %author={Fontaine, Jean-Marc},
   %author={Laffaille, Guy},
   %title={Construction de repr\'esentations $p$-adiques},
 %%  language={French},
   %journal={Ann. Sci. \'Ecole Norm. Sup. (4)},
   %volume={15},
   %date={1982},
   %number={4},
   %pages={547--608 (1983)},
   %issn={0012-9593},
   %review={\MR{707328}},
%}

%

\bib{MR2565906}{article}{
   author={Kisin, Mark},
   author={Ren, Wei},
   title={Galois representations and Lubin-Tate groups},
   journal={Doc. Math.},
   volume={14},
   date={2009},
   pages={441--461},
   issn={1431-0635},
   review={\MR{2565906}},
}

\bib{Koshi_HSC_prime}{article}{
  doi = {10.48550/ARXIV.2202.03804},
  
  eprint = {https://arxiv.org/abs/2202.03804},
  
  author = {Koshikawa, Teruhisa},
  
%  keywords = {Algebraic Geometry (math.AG), Number Theory (math.NT), FOS: Mathematics, FOS: Mathematics},
  
  title = {The numerical Hodge standard conjecture for the square of a simple abelian variety of prime dimension},
  
  publisher = {arXiv},
  
  year = {2022},
  
%  copyright = {arXiv.org perpetual, non-exclusive license}
}

\bib{MR172878}{article}{
   author={Lubin, Jonathan},
   author={Tate, John},
   title={Formal complex multiplication in local fields},
   journal={Ann. of Math. (2)},
   volume={81},
   date={1965},
   pages={380--387},
   issn={0003-486X},
   review={\MR{172878}},
   doi={10.2307/1970622},
}

%\bib{MR260844}{article}{
   %author={Milnor, John},
   %title={Algebraic $K$-theory and quadratic forms},
   %journal={Invent. Math.},
   %volume={9},
   %date={1969/70},
   %pages={318--344},
   %issn={0020-9910},
   %review={\MR{260844}},
   %doi={10.1007/BF01425486},
%}

\bib{MR1265538}{article}{
	author={Milne, J. S.},
	title={Motives over finite fields},
	conference={
		title={Motives},
		address={Seattle, WA},
		date={1991},
	},
	book={
		series={Proc. Sympos. Pure Math.},
		volume={55, Part 1},
		publisher={Amer. Math. Soc., Providence, RI},
	},
	isbn={0-8218-1636-5},
	date={1994},
	pages={401--459},
	review={\MR{1265538}},
	doi={10.1090/pspum/055.1/1265538},
}

\bib{MR1906596}{article}{
   author={Milne, J. S.},
   title={Polarizations and Grothendieck's standard conjectures},
   journal={Ann. of Math. (2)},
   volume={155},
   date={2002},
   number={2},
   pages={599--610},
   issn={0003-486X},
   review={\MR{1906596}},
   doi={10.2307/3062126},
}

\bib{MR249519}{article}{
   author={Milnor, John},
   title={On isometries of inner product spaces},
   journal={Invent. Math.},
   volume={8},
   date={1969},
   pages={83--97},
   issn={0020-9910},
   review={\MR{249519}},
   doi={10.1007/BF01404612},
}

\bib{MR1697859}{book}{
   author={Neukirch, J\"{u}rgen},
   title={Algebraic number theory},
   series={Grundlehren der mathematischen Wissenschaften [Fundamental
   Principles of Mathematical Sciences]},
   volume={322},
   note={Translated from the 1992 German original and with a note by Norbert
   Schappacher;
   With a foreword by G. Harder},
   publisher={Springer-Verlag, Berlin},
   date={1999},
   pages={xviii+571},
   isbn={3-540-65399-6},
   review={\MR{1697859}},
   doi={10.1007/978-3-662-03983-0},
}

\bib{MR2795752}{article}{
	author={O'Sullivan, Peter},
	title={Algebraic cycles on an abelian variety},
	journal={J. Reine Angew. Math.},
	volume={654},
	date={2011},
	pages={1--81},
	issn={0075-4102},
	review={\MR{2795752}},
	doi={10.1515/CRELLE.2011.025},
}

\bib{MR0338002}{book}{
   author={Saavedra Rivano, Neantro},
   title={Cat\'{e}gories Tannakiennes},
   language={French},
   series={Lecture Notes in Mathematics, Vol. 265},
   publisher={Springer-Verlag, Berlin-New York},
   date={1972},
   pages={ii+418},
   review={\MR{0338002}},
}

\bib{MR0354618}{book}{
   author={Serre, Jean-Pierre},
   title={Corps locaux},
   language={French},
   series={Publications de l'Universit\'{e} de Nancago, No. VIII},
   note={Deuxi\`eme \'{e}dition},
   publisher={Hermann, Paris},
   date={1968},
   pages={245},
   review={\MR{0354618}},
}

%\bib{MR0263823}{book}{
%	author={Serre, Jean-Pierre},
%	title={Abelian $l$-adic representations and elliptic curves},
%	note={McGill University lecture notes written with the collaboration of
%		Willem Kuyk and John Labute},
%	publisher={W. A. Benjamin, Inc., New York-Amsterdam},
%	date={1968},
%	pages={xvi+177 pp. (not consecutively paged)},
%	review={\MR{0263823}},
%}

\bib{MR0498338}{book}{
   author={Serre, Jean-Pierre},
   title={Cours d'arithm\'{e}tique},
   language={French},
   series={Le Math\'{e}maticien, No. 2},
   note={Deuxi\`eme \'{e}dition revue et corrig\'{e}e},
   publisher={Presses Universitaires de France, Paris},
   date={1977},
   pages={188},
}

\bib{MR1484415}{book}{
	author={Serre, Jean-Pierre},
	title={Abelian $l$-adic representations and elliptic curves},
	series={Research Notes in Mathematics},
	volume={7},
	note={With the collaboration of Willem Kuyk and John Labute;
		Revised reprint of the 1968 original},
	publisher={A K Peters, Ltd., Wellesley, MA},
	date={1998},
	pages={199},
	isbn={1-56881-077-6},
	review={\MR{1484415}},
}

\bib{MR552586}{article}{
    AUTHOR = {Shioda, Tetsuji},
     TITLE = {The {H}odge conjecture for {F}ermat varieties},
   JOURNAL = {Math. Ann.},
%  FJOURNAL = {Mathematische Annalen},
    VOLUME = {245},
      YEAR = {1979},
    NUMBER = {2},
     PAGES = {175--184},
       DOI = {10.1007/BF01428804},
       
}

\bib{MR1922833}{article}{
   author={Tsuji, Takeshi},
   title={Semi-stable conjecture of Fontaine-Jannsen: a survey},
   note={Cohomologies $p$-adiques et applications arithm\'{e}tiques, II},
   journal={Ast\'{e}risque},
   number={279},
   date={2002},
   pages={323--370},
   issn={0303-1179},
   review={\MR{1922833}},
}

\end{biblist}
\end{bibdiv}

\end{document}